\documentclass[10pt]{article}

\usepackage{amssymb}
\usepackage{amsmath}
\usepackage{theorem}
\usepackage{epsfig}
\usepackage{verbatim}
\usepackage{graphicx}
\usepackage{subfigure}
\usepackage{cancel}

\usepackage{color}

\textwidth 155mm \evensidemargin 0.5cm \oddsidemargin 0.5cm
\textheight 21cm 

\newtheorem{theorem}{Theorem}[section]
\newtheorem{lemma}[theorem]{Lemma}

\newtheorem{prop}[theorem]{Proposition}
\newtheorem{corollary}[theorem]{Corollary}

\newtheorem{defi}[theorem]{Definition}
\newtheorem{rem}[theorem]{Remark}

\newcommand{\R}{\Bbb{R}}

\newcommand{\T}{\mathbb{T}}

\newcommand{\al}{\alpha}

\newcommand{\pa}{\partial}

\newcommand{\Dr}{\Delta R}
\newcommand{\Dh}{\Delta h}
\newcommand{\oR}{\overline{R}}
\newcommand{\oh}{\overline{h}}

\newenvironment{proof}{\begin{trivlist} \item[] {\em Proof:}}{\hfill $\Box$
                       \end{trivlist}}

\hyphenation{vi-ce-ver-sa}

\makeatletter
\renewcommand*\l@section{\@dottedtocline{1}{0em}{1.5em}}
\renewcommand*\l@subsection{\@dottedtocline{2}{1.5em}{2.3em}}
\renewcommand*\l@subsubsection{\@dottedtocline{3}{3.8em}{3.7em}}
\makeatother

\numberwithin{equation}{section}

\allowdisplaybreaks[4]

\begin{document}

\title{Existence and regularity of rotating global solutions for the generalized surface quasi-geostrophic equations}

\author{Angel Castro, Diego C\'ordoba and Javier G\'omez-Serrano}

\maketitle

\begin{abstract}

Motivated by the recent work of Hassainia and Hmidi [Z. Hassainia, T. Hmidi - On the {V}-states for the generalized quasi-geostrophic equations,\emph{arXiv preprint arXiv:1405.0858}], we close the question of the existence of convex global rotating solutions for the generalized surface quasi-geostrophic equation for $\alpha \in [1,2)$. We also show $C^{\infty}$ regularity of their boundary for all $\al \in (0,2)$. \\

\vskip 0.3cm
\textit{Keywords: bifurcation theory, Crandall-Rabinowitz, V-states, patches, surface quasi-geostrophic}

\end{abstract}


\section{Introduction}

In this paper, we consider the generalized surface-quasigeostrophic equation (gSQG):
\begin{align*}
\left\{ \begin{array}{ll}
\partial_{t}\theta+u\cdot\nabla\theta=0,\quad(t,x)\in\mathbb{R}_+\times\mathbb{R}^2, &\\
u=-\nabla^\perp(-\Delta)^{-1+\frac{\alpha}{2}}\theta,\\
\theta_{|t=0}=\theta_0,
\end{array} \right.
\end{align*}

where $\alpha \in (0,2)$. The case $\alpha = 1$ corresponds to the surface quasi-geostrophic (SQG) equation and the limiting case $\alpha = 0$ refers to the 2D incompressible Euler equation. $\alpha = 2$ produces stationary solutions.

Our goal in this article is to show the existence of global rotating solutions (also known as V-states) of the gSQG equation. These solutions are also known to exist for the vortex patch problem ($\al = 0$), see the paper \cite{Hmidi-Mateu-Verdera:rotating-vortex-patch} by Hmidi, Mateu and Verdera, and recently their existence has been shown for the case $0 < \al < 1$ \cite{Hassainia-Hmidi:v-states-generalized-sqg} by Hassainia and Hmidi.

Motivated by the articles of Constantin et al. \cite{Constantin-Majda-Tabak:formation-fronts-qg} and Held et al. \cite{Held-Pierrehumbert-Garner-Swanson:sqg-dynamics}, a lot of effort has been devoted to understanding these equations for the SQG $(\al = 1)$ case. More generally, the problem of whether the gSQG system presents global solutions or not is not completely understood.

  The existence of global weak solutions in $L^2$ for the case $\al = 1$ (SQG) was proven by Resnick in \cite{Resnick:phd-thesis-sqg-chicago} using an extra cancellation due to the oddness of the Riesz transform and it was extended to the gSQG case in \cite{Chae-Constantin-Cordoba-Gancedo-Wu:gsqg-singular-velocities} by Chae et al., even though the question of non-uniqueness is still open (see \cite{Isett-Vicol:holder-continuous-active-scalar} and references therein). A one-dimensional model of the gSQG equations was studied by Dong and Li in \cite{Dong-Li:one-dimensional-alpha-patch}.

 A particular kind of weak solutions for an active scalar are the so called $\alpha-$\textit{patches}, i.e., solutions for which $\theta$ is
 a step function:

  \begin{align}
  \theta(x,t) =
  \left\{
 \begin{array}{ll}
   \theta_1, \text{ if } \ \ x \in \Omega(t) \\ 
   \theta_2, \text{ if }  \ \ x \in \Omega(t) ^c, \\
   \end{array}
  \right.
\end{align}

 where $ \Omega(0)$ is given by the initial distribution of $\theta$, $\theta_1$ and $\theta_2$ are constants, and $\Omega(t)$ is the evolution of $\Omega(0)$ under the velocity field $u$ given by $u(x,t)  = \nabla^\perp \Lambda^{-(2-\alpha)} \theta (x,t) $ for $\Lambda = (-\Delta)^{1/2}$.

  The evolution of such distribution is completely determined by the evolution of the boundary, allowing the problem to be treated as a non-local one dimensional equation for the contour of $\Omega(t)$. In this setting, local existence of smooth solutions was first obtained for $C^{\infty}$ curves by Rodrigo in \cite{Rodrigo:evolution-sharp-fronts-qg} for $0 < \al \leq 1$. The question of local existence of simply connected  $\alpha-$\textit{patches} with Sobolev regularity of its boundary was addressed by Gancedo in \cite{Gancedo:existence-alpha-patch-sobolev} for $0 < \al \leq 1$ and for $1 < \al < 2$ by Chae et al. in \cite{Chae-Constantin-Cordoba-Gancedo-Wu:gsqg-singular-velocities}.

 To get a better understanding of the behaviour of solutions of these interface problems, several numerical experiments have been performed. In \cite{Cordoba-Fontelos-Mancho-Rodrigo:evidence-singularities-contour-dynamics}, C\'ordoba et al. studied the problem of the evolution of two patches for a range of $\alpha$. Their simulations suggest an asymptotically self-similar singular scenario in which the distance between both patches goes to zero in finite time while simultaneously the curvature of the boundaries blows up. Scott and Dritschel  \cite{Scott-Dritschel:self-similar-sqg}, based on numerical simulations, suggest that an elliptical patch with a big aspect ratio between its axes may develop a self-similar singularity with an explosive growth of the curvature in the case $\al = 1$.  Recently (\cite{Castro-Cordoba-GomezSerrano-MartinZamora:remarks-geometric-properties-sqg}) it has been shown that elliptical patches are not rotating solutions for $\al > 0$, as opposed to the limiting case $\al \rightarrow 0$, - for which they are - and by means of a rigorous computer-assisted proof the existence of convex solutions that lose their convexity in finite time has been established. In a paper by Scott \cite{Scott:scenario-singularity-quasigeostrophic}, it was already pointed out that small perturbations of thin strips may lead to a self similar cascade of instabilities, leading to a possible arc chord blow up. Gancedo and Strain \cite{Gancedo-Strain:absence-splash-muskat-SQG} proved that in fact, no splash singularity can be formed, i.e., two interfaces can not collapse in a point, if the interfaces remain smooth.


 The evolution equation for the interface of an $\alpha-$ patch, which we parametrize as a $2 \pi$ periodic curve $z(x)$, can be written as
 \begin{align}
\label{Ecuacion-alpha-patch}
 \partial_t z(x,t) = -(\theta_2 - \theta_1)C(\al) \int_{0} ^{ 2 \pi} \frac{ \partial_x z (x,t) - \partial_x z(x-y,t) }{ \vert z(x,t) - z(x-y, t) \vert^{\alpha}} dy,
 \end{align}

 since we can add terms in the tangential direction without changing the evolution of the patch.

The normalizing constant $C(\al)$ is given by:

\begin{align*}
 C(\al) = \frac{1}{2\pi} \frac{\Gamma\left(\frac{\al}{2}\right)}{2^{1-\al}\Gamma\left(\frac{2-\al}{2}\right)}.
\end{align*}






 The analogous problem for the vorticity formulation of 2D Euler $(\alpha \to 0)$ is better understood. The global existence and uniqueness of weak solutions of the 2D Euler in vorticity formulation is due to Yudovich \cite{Yudovich:Nonstationary-ideal-incompressible}. Regularity preservation for $\mathcal{C}^{1, \gamma}$ patches was obtained by Chemin using techniques from paradifferential calculus in \cite{Chemin:persistance-structures-fluides-incompressibles}. Another proof of that result, which highlights the extra cancellation on semi spheres of even kernels, can be found in \cite{Bertozzi-Constantin:global-regularity-vortex-patches} by Bertozzi and Constantin. Serfati, in \cite{Serfati:preuve-directe-existence-globale-patches} provided another one, giving a fuller characterization of the velocity gradient's regularity.

  In recent years, Denisov has studied the process of merging for the vortex patch problem. This is the scenario showed by the numerics of \cite{Cordoba-Fontelos-Mancho-Rodrigo:evidence-singularities-contour-dynamics} for the $\alpha$-patch. However, for the vortex patch problem, the collapse in a point can not happen in finite time, the distance between the two patches can decay at most as fast as a double exponential. Denisov proves in  \cite{Denisov:sharp-corner-euler-patches} that this bound is sharp if one is allowed to modify slightly the velocity by superimposing a smooth background incompressible flow.

However, there is a family of solutions that evolve by rotating with constant angular velocity around its center of mass. These solutions are known as V-states. Deem and Zabusky were the first to compute them numerically \cite{Deem-Zabusky:vortex-waves-stationary}, and later other authors have improved the methods and numerically computed a bigger class (see \cite{Wu-Overman-Zabusky:steady-state-Euler-2d,Elcrat-Fornberg-Miller:stability-vortices-cylinder,LuzzattoFegiz-Williamson:efficient-numerical-method-steady-uniform-vortices,Saffman-Szeto:equilibrium-shapes-equal-uniform-vortices} for a small sample of them).

In the case $\alpha = 0$, Burbea \cite{Burbea:motions-vortex-patches} outlined a proof of the existence of V-states by means of a conformal mapping and bifurcation theory. A fully rigorous proof was given by Hmidi, Mateu and Verdera in \cite{Hmidi-Mateu-Verdera:rotating-vortex-patch}. They also showed that the family of V-states has $C^{\infty}$ boundary regularity and is convex. In another paper \cite{Hmidi-Mateu-Verdera:rotating-doubly-connected-vortices}, they studied the V-state existence for the case of doubly connected domains.

In a very recent preprint, Hassainia and Hmidi have worked in extending the ideas of the aforementioned papers to the case $0 < \alpha \leq 1$ \cite{Hassainia-Hmidi:v-states-generalized-sqg}. They are able to prove the existence of convex V-states with $C^{k}$ boundary regularity for the case $0 < \alpha < 1$. The possibility of $C^{\infty}$ regularity and the existence for the case $1 \leq \alpha < 2$ are left open.

Motivated by their work, we have attempted to fill the gap. Precisely, in this paper we are able to prove existence and $C^{\infty}$ regularity of convex global rotating solutions for the remaining open cases of $\alpha$. For the existence part, the key ingredient in our proof is a careful choice of the spaces in which we apply the Crandall-Rabinowitz theorem in a similar spirit as in the previous papers \cite{Burbea:motions-vortex-patches,Hassainia-Hmidi:v-states-generalized-sqg,Hmidi-Mateu-Verdera:rotating-vortex-patch,Hmidi-Mateu-Verdera:rotating-doubly-connected-vortices}. Concerning the regularity, one has to invert the most singular operator onto the less singular one to be able to bootstrap.

From now on, we will assume that $\theta_2 - \theta_1 = 1$.

\subsection{Contour equations for the rotating solutions}

Let $z(x,t) = (z_1(x,t),z_2(x,t))$ be the interface of the patch. Since our results will be concerned with patches that are close to the disk, we can assume that the patch is star-shaped and therefore it can be parametrized as $(R(x,t)\cos(x),R(x,t)\sin(x))$. In order to obtain the equations for $R(x,t)$, we will start writing up the equation for $z(x,t)$, and then substitute for the star-shaped ansatz.

Let us assume that $z(x,t)$ rotates with frequency $\Omega$ counterclockwise. Thus

\begin{align*}
z_t(x,t) = \Omega z^{\perp}(x,t),
\end{align*}

where for every $v = (v_1, v_2)$, $v^{\perp}$ is defined as $(-v_2,v_1)$. The equations a V-state satisfies are

\begin{align*}
z_t(x,t) \cdot n & = u(z(x,t),t) \cdot n \\
\Omega \langle z^{\perp}(x,t), z_{x}^{\perp}(x,t) \rangle  = \Omega \langle z(x,t), z_{x}(x,t) \rangle & = \langle u(z(x,t),t), z_{x}^{\perp}(x,t) \rangle.
\end{align*}

Here, $n$ is the unitary normal vector and the tangential component of the velocity does not change the shape of the curve. The question of finding a rotating global solution patch of the generalized quasi-geostrophic equation is reduced to find a zero of $F(\Omega,R)$, where

\begin{align}
\label{funcionalvstates}
F(\Omega,R) = \Omega R'(x) - \sum_{i=1}^{3} F_{i}(R),
\end{align}

and the $F_{i}$ are

\begin{align*}
F_1(R)=&\frac{1}{R(x)}C(\al)\int\frac{\sin(x-y)}{\left(\left(R(x)-R(y)\right)^2+4R(x)R(y)\sin^2\left(\frac{x-y}{2}\right)\right)^\frac{\al}{2}}
\left(R(x)R(y)+R'(x)R'(y)\right)dy,\\
F_2(R)=&C(\al)\int\frac{\cos(x-y)}{\left(\left(R(x)-R(y)\right)^2+4R(x)R(y)\sin^2\left(\frac{x-y}{2}\right)\right)^\frac{\al}{2}}
\left(R'(y)-R'(x)\right)dy,\\
F_3(R)=&\frac{R'(x)}{R(x)}C(\al)\int\frac{\cos(x-y)}{\left(\left(R(x)-R(y)\right)^2+4R(x)R(y)\sin^2\left(\frac{x-y}{2}\right)\right)^\frac{\al}{2}}
\left(R(x)-R(y)\right)dy,
\end{align*}

and the above integrals are performed on the torus. For simplicity, from now on we will omit writing the domain of integration, which is always the torus.

\subsection{Functional spaces}

In our proofs, we will use the following spaces:

\begin{align*}
X^{k} = \left\{f \in H^{k}, f(x) = \sum_{j=1}^{\infty}a_{j} \cos(jx)\right\}, \quad
X^{k}_{m} = \left\{f \in H^{k}, f(x) = \sum_{j=1}^{\infty}a_{jm} \cos(jmx)\right\} \\
Y^{k} = \left\{f \in H^{k}, f(x) = \sum_{j=1}^{\infty}a_{j} \sin(jx)\right\}, \quad
Y^{k}_{m} = \left\{f \in H^{k}, f(x) = \sum_{j=1}^{\infty}a_{jm} \sin(jmx)\right\} \\
X^{k+\log} = \left\{f \in H^{k}, f(x) = \sum_{j=1}^{\infty}a_{j} \cos(jx), \left\|\int_{\mathbb{T}} \frac{\pa^{k}f(x-y)-\pa^{k}f(x)}{\left|\sin\left(\frac{y}{2}\right)\right|}dy\right\|_{L^2(x)} < \infty\right\}, \quad k \in \mathbb{Z} \\
X^{k+\log}_{m} = \left\{f \in H^{k}, f(x) = \sum_{j=1}^{\infty}a_{jm} \cos(jmx), \left\|\int_{\mathbb{T}} \frac{\pa^{k}f(x-y)-\pa^{k}f(x)}{\left|\sin\left(\frac{y}{2}\right)\right|}dy\right\|_{L^2(x)} < \infty\right\}, \quad k \in \mathbb{Z}.
\end{align*}

The norm is given in the last two cases by the sum of the $H^{k}$-norm and the additional finite integral in the definition, and in the other four by the $H^{k}$ norm. We give an alternative characterization of the $X^{k+\log}$ spaces.  This will be useful for the spectral study.

\begin{prop}
\label{alternativexlog}
\begin{align*}
f \in X^{k+\log} \Leftrightarrow f \in X^{k}, |a_1|^{2} + \sum_{j=2}^{\infty}|a_j|^{2}|j|^{2k}(1+\log(j))^{2} < \infty,
\end{align*}
where \begin{align*}
f = \sum_{j=1}^{\infty} a_{j} \cos(jx)
\end{align*}
\end{prop}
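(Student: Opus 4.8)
The plan is to recognize the integral in the definition of $X^{k+\log}$ as a Fourier multiplier and simply read off its symbol. I introduce the operator
\[
\mathcal{T}g(x)=\int_{\mathbb{T}}\frac{g(x-y)-g(x)}{\left|\sin\left(\frac{y}{2}\right)\right|}\,dy,
\]
which is translation invariant and hence diagonal in the Fourier basis; in particular it commutes with $\partial^{k}$. Testing on $g(x)=e^{ijx}$ gives $\mathcal{T}e^{ijx}=m(j)e^{ijx}$ with
\[
m(j)=\int_{\mathbb{T}}\frac{e^{-ijy}-1}{\left|\sin\left(\frac{y}{2}\right)\right|}\,dy=-\int_{\mathbb{T}}\frac{1-\cos(jy)}{\left|\sin\left(\frac{y}{2}\right)\right|}\,dy=:-I_{|j|},
\]
the imaginary part vanishing by oddness of $\sin(jy)/|\sin(y/2)|$. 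Since $m(j)=m(-j)=-I_{|j|}$ is real, applying $\mathcal{T}$ to $\partial^{k}f$ multiplies the $j$-th harmonic by $-I_{j}$, so that $\mathcal{T}\partial^{k}f=-\sum_{j\geq1}I_{j}\,a_{j}\,\partial^{k}\cos(jx)$.

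First I would establish the key estimate $I_{j}\asymp 1+\log(j)$ for $j\geq1$ (with the convention $\log 1=0$, so the $j=1$ term is comparable to $1$). To this end I split the kernel as $\tfrac{1}{|\sin(y/2)|}=\tfrac{2}{|y|}+g(y)$, where $g$ extends to a bounded function on $\mathbb{T}$; its contribution is $O(1)$ uniformly in $j$ because $|1-\cos(jy)|\leq2$ and $g\in L^{1}$. The leading term reduces, after the substitution $u=jy$, to $4\int_{0}^{j\pi}\frac{1-\cos u}{u}\,du=4\log j+O(1)$, the cosine integral asymptotics; this can also be obtained by hand by cutting at $u=1$, estimating $1-\cos u\leq u^{2}/2$ on $(0,1)$ and integrating $\tfrac{1}{u}$ and the oscillatory $\tfrac{\cos u}{u}$ separately on $(1,j\pi)$. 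Positivity of $I_{j}$ for every $j\geq1$ (the integrand is nonnegative) together with this asymptotics yields the two-sided bound $c(1+\log j)\leq I_{j}\leq C(1+\log j)$.

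With the estimate in hand the conclusion follows from orthogonality. The functions $\{\partial^{k}\cos(jx)\}_{j\geq1}$ are pairwise orthogonal in $L^{2}(\mathbb{T})$ with $\|\partial^{k}\cos(jx)\|_{L^{2}}^{2}=\pi\, j^{2k}$, so by Plancherel
\[
\left\|\mathcal{T}\partial^{k}f\right\|_{L^{2}}^{2}=\pi\sum_{j\geq1}I_{j}^{2}\,j^{2k}\,|a_{j}|^{2}\asymp |a_{1}|^{2}+\sum_{j\geq2}(1+\log j)^{2}\,j^{2k}\,|a_{j}|^{2}.
\]
Since $f\in X^{k+\log}$ means precisely that $f\in X^{k}$ and the left-hand side is finite, and since finiteness of the right-hand side already forces $\sum_{j}j^{2k}|a_{j}|^{2}<\infty$ (as $(1+\log j)^{2}\geq1$), i.e. $f\in H^{k}$, the two conditions are equivalent, which is the claim.

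I expect the only genuine difficulty to be the uniform logarithmic asymptotics $I_{j}\asymp 1+\log(j)$: one must control both the upper and lower bounds across all frequencies simultaneously, checking that the non-oscillatory part $\tfrac{2}{|y|}$ produces exactly the $\log j$ growth while every remaining piece stays bounded, and that $I_{j}$ does not degenerate at the low frequencies (which is what makes the separate $|a_{1}|^{2}$ term correct).
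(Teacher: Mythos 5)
Your proof is correct, and its skeleton -- recognize the defining integral as a translation-invariant operator, diagonalize it on the Fourier basis, and conclude by orthogonality/Plancherel -- is the same as the paper's. The difference lies in how the symbol's growth is obtained. The paper's proof is a two-line citation of its Lemma \ref{LemmaICIS} and Lemma \ref{lemmagrowthomega}: Lemma \ref{LemmaICIS} computes the multiplier \emph{exactly} for $\al=1$, namely $IC_j=\cos(jx)\sum_{m=1}^{j}\frac{8}{2m-1}$, by an inductive recursion relying on the Gamma-function identity \eqref{integralgammas}, and the logarithmic growth of the partial sums $\sum_{m=1}^{j}\frac{1}{2m-1}$ then gives the two-sided comparison with $(1+\log j)^2$. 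You instead prove only the asymptotic statement $I_j\asymp 1+\log j$, by splitting $\frac{1}{|\sin(y/2)|}=\frac{2}{|y|}+O(1)$ and invoking the cosine-integral asymptotics $\int_0^{j\pi}\frac{1-\cos u}{u}\,du=\log j+O(1)$, together with the uniform lower bound $I_j\geq 2\pi$ (from $|\sin(y/2)|\leq 1$) to handle low frequencies -- a point you correctly identify as what makes the separate $|a_1|^2$ term right. What each route buys: the paper's exact evaluation is not overhead, since Lemma \ref{LemmaICIS} is needed anyway for the spectral study in Step 4 (the dispersion set $\Omega_k$), so the proposition falls out for free, and the closed form also gives positivity and monotonicity of the symbol at a glance; your argument is self-contained and more elementary (no special-function identities, no induction) and more robust, in that it only uses the local behavior of the kernel near $y=0$, so it would survive any perturbation of the kernel with the same singularity, at the price of yielding only two-sided bounds -- which is all this proposition requires. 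Both your write-up and the paper's gloss over the same measure-theoretic technicality (justifying the term-by-term action of the multiplier on the series, e.g.\ by working first on trigonometric polynomials and passing to the limit), so you are at the same level of rigor as the original.
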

\begin{proof}
$\Rightarrow:$
%

By virtue of Lemmas \ref{LemmaICIS} and \ref{lemmagrowthomega}:

\begin{align*}
|a_1|^{2} + \sum_{j=2}^{\infty}|a_j|^{2}|j|^{2k}(1+\log(j))^{2} \leq C\left(|a_1|^{2} + \sum_{j=2}^{\infty}|a_j|^{2}|j|^{2k}(\log(j))^{2}\right)  \leq C\left\|\int \frac{\pa^{k} f(x-y)- \pa^{k} f(x)}{\left|\sin\left(\frac{y}{2}\right)\right|}dy\right\|_{L^{2}}^{2} < \infty
\end{align*}

$\Leftarrow:$

Since $f \in X^{k}$, it can be written as a Fourier series. Let those coefficients be $a_k$. By Lemmas \ref{LemmaICIS} and
\ref{lemmagrowthomega}:

\begin{align*}
 \left\|\int \frac{\pa^{k} f(x-y)- \pa^{k} f(x)}{\left|\sin\left(\frac{y}{2}\right)\right|}dy\right\|_{L^{2}}^{2} \leq C \left(|a_1|^{2} + \sum_{j=2}^{\infty}|a_j|^{2}|j|^{2k} (\log(j))^{2}\right) \leq  C \left(|a_1|^{2} + \sum_{j=2}^{\infty}|a_j|^{2}|j|^{2k} (1+\log(j))^{2}\right) < \infty.
\end{align*}

\end{proof}

\begin{rem}
There is a substantial difference between the spaces $X^{k+\log}$ and the spaces $B^{s}$ and $B^{s-1}_{Log}$ that were proposed as candidates in \cite{Hassainia-Hmidi:v-states-generalized-sqg}. Even though the Fourier multiplier scaling is correct, the $l^{1}$-summability condition and the definition via Fourier series does not allow to estimate the nonlinear terms in an easy way. Finding an alternative characterization in physical space removes this obstacle. Moreover, the choice of $X^{k+\log}$ as a space that wins a logarithm of a derivative instead of finding a space that loses a logarithm of a derivative (as suggested in \cite{Hassainia-Hmidi:v-states-generalized-sqg}) alleviates the heavy computations. We make no claim that the proposed spaces $B^{s},B^{s-1}_{Log}$ might not work.
\end{rem}

\subsection{Theorems and outline of the proofs}

 The paper is organized as follows:

In Section \ref{sectionexistencealpha1}, we prove the following theorem:

\begin{theorem}
\label{teoremaexistenciaalpha1}
 Let $k\geq 3, m \in \mathbb{N}, m \geq 2$ and let

 \begin{align*} \Omega_m = -\frac{2}{\pi}\sum_{k=2}^{m}\frac{1}{2k-1}.\end{align*}

Then, there exists a family of $m$-fold solutions $(\Omega,R), R(x)-1 \in X^{k+\log}_{m}$ of the equation \eqref{funcionalvstates} with $\al = 1$ that bifurcate from the disk at $\Omega = \Omega_m$.
\end{theorem}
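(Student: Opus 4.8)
The plan is to apply the Crandall--Rabinowitz bifurcation theorem to the functional $F$ of \eqref{funcionalvstates} with $\al = 1$, taking as domain a neighbourhood of the origin in $\R \times X^{k+\log}_m$ (writing $R = 1 + r$ with $r \in X^{k+\log}_m$) and as target space $Y^{k-1}_m$. First I would record that the disk is a trivial branch: for $R \equiv 1$ one has $R' \equiv 0$, so the integrands of $F_2$ and $F_3$ vanish identically while the integrand of $F_1$ is odd in $x-y$; hence $F(\Omega,1)=0$ for every $\Omega$. The substitution $r = R-1$ then reduces the statement to producing nontrivial zeros of $F(\Omega,1+r)$ bifurcating from $r=0$ at $\Omega=\Omega_m$.

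Next I would linearize. Since the disk is invariant under the full rotation group, the linearized operator $\mathcal{L}_\Omega := \pa_r F(\Omega,1)$ commutes with rotations and is therefore diagonal in the Fourier basis, sending $\cos(jmx)\mapsto \mu_{jm}(\Omega)\sin(jmx)$ for scalar symbols $\mu_{jm}(\Omega)$. The term $\Omega R'$ contributes the order-one piece $-\Omega(jm)\sin(jmx)$, whereas linearizing the integral operators $F_i$ at $\al=1$ reduces, after the change of variables $\tau = x-y$, to explicitly computable trigonometric integrals of the type $\int (1-\cos(jm\tau))/|\sin(\tau/2)|\,d\tau$, which evaluate to multiples of $\sum_{k=1}^{jm}(2k-1)^{-1}$. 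Collecting these gives a symbol of the shape $\mu_n(\Omega)=c\,n\,(\Omega-\Omega_n)$ whose null frequency $\Omega_n$ is exactly the partial sum in the statement, so that $\mu_m(\Omega_m)=0$. The logarithmic growth $\Omega_n \sim -\tfrac1\pi\log n$ reflects that $\mathcal{L}_\Omega$ loses one derivative \emph{and} a logarithm; this is precisely why $\mathcal{L}_\Omega$ maps $X^{k+\log}_m$ boundedly onto $Y^{k-1}_m$, the extra logarithmic weight in the domain (via the characterization of Proposition \ref{alternativexlog}) absorbing the logarithm in the symbol, while the term $\Omega R'$ is comparatively more regular.

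With the symbols in hand, the three structural hypotheses of Crandall--Rabinowitz follow from the monotonicity of $n\mapsto\Omega_n$. Because $\sum_{k=2}^n(2k-1)^{-1}$ is strictly increasing, $\Omega_{jm}<\Omega_m$ for all $j\geq 2$, so $\mu_{jm}(\Omega_m)\neq 0$ and $\ker\mathcal{L}_{\Omega_m}$ is one-dimensional, spanned by $\cos(mx)$. The range is the closed subspace of $Y^{k-1}_m$ consisting of series that omit the $\sin(mx)$ mode, hence has codimension one. Finally $\pa_\Omega\mathcal{L}_\Omega[\cos(mx)]\big|_{\Omega_m} = \pa_\Omega\mu_m(\Omega_m)\sin(mx) = c\,m\,\sin(mx)$ is a nonzero multiple of the missing mode and therefore lies outside the range, yielding the transversality condition and the bifurcating branch.

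The step I expect to be the genuine obstacle is not the spectral computation but verifying that $F(\Omega,\cdot)$ is a $C^1$ (indeed smooth) map from a neighbourhood of $0$ in $X^{k+\log}_m$ into $Y^{k-1}_m$, together with the boundedness and Fredholm properties of $\mathcal{L}_\Omega$ between these spaces. At $\al=1$ the kernels are exactly order-one singular with a logarithmic correction, so the naive $H^k$ scale is off by precisely that logarithm; controlling the nonlinear remainders requires splitting each $F_i$ into a principal singular part and a smoother commutator, estimating the principal part through the physical-space quantity $\int (\pa^{k-1}(\cdots)(x-y)-\pa^{k-1}(\cdots)(x))/|\sin(y/2)|\,dy$ that defines the log-norm, and checking that the denominators $((R(x)-R(y))^2+4R(x)R(y)\sin^2(\tfrac{x-y}{2}))^{1/2}$ stay comparable to $|\sin(\tfrac{x-y}{2})|$ for $r$ small. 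This is where the specific choice of $X^{k+\log}_m$ pays off, and pushing the estimates through for every $k\geq 3$ is the technical heart of the argument.
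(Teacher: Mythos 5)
Your proposal follows essentially the same route as the paper: Crandall--Rabinowitz applied to $F$ between $X^{k+\log}_m$ and $Y^{k-1}_m$, the trivial branch via oddness of the $F_1$ integrand, the explicit Fourier symbol $-n(\Omega-\Omega_n)$ with kernel spanned by $\cos(mx)$, monotonicity of $n\mapsto\Omega_n$ for simplicity of the eigenvalue, the codimension-one range characterization in which the logarithmic growth $\Omega_n\sim\log n$ is absorbed by the log-weighted norm of Proposition \ref{alternativexlog}, transversality via $\partial_\Omega\partial_R F(\Omega_m,1)(\cos(mx))=-m\sin(mx)$, and the singular-kernel splitting (principal part proportional to $1/|\sin(y/2)|$ plus an $\mathcal{H}_0$-type remainder) for the nonlinear estimates. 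The parts you defer as the ``technical heart'' --- the mapping and $C^1$ properties in the $X^{k+\log}$ scale, and the preservation of $m$-fold symmetry and oddness needed to work in $X^{k+\log}_m$, $Y^{k-1}_m$ --- are exactly the paper's Steps 1, 3 and 6, and your proposed strategy for them coincides with the paper's.
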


Section \ref{Sectionexistencealphamayor1} is devoted to prove

\begin{theorem}
\label{teoremaexistenciaalphamayor1}
 Let $k\geq 3, m \in \mathbb{N}, m \geq 2, 1 < \alpha < 2$ and let

 \begin{align*} \Omega_m = -2^{\al-1} \frac{\Gamma\left(1-\al\right)}{\left(\Gamma\left(1-\frac{\al}{2}\right)\right)^{2}}\left(\frac{\Gamma\left(1+\frac{\al}{2}\right)}{\Gamma\left(2-\frac{\al}{2}\right)} - \frac{\Gamma\left(m+\frac{\al}{2}\right)}{\Gamma\left(1+m-\frac{\al}{2}\right)}\right).\end{align*} Then, there exists a family of $m$-fold solutions $(\Omega,R), R(x)-1 \in X^{k}_{m}$ of the equation \eqref{funcionalvstates} with $1 < \al < 2$ that bifurcate from the disk at $\Omega = \Omega_m$.
\end{theorem}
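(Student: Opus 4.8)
The plan is to produce the $m$-fold V-states as a bifurcation from the trivial branch of disks, applying the Crandall--Rabinowitz theorem to the functional $F(\Omega,R)$ of \eqref{funcionalvstates}. Writing $R = 1 + f$, the disk corresponds to $f\equiv 0$, and since a disk rotates rigidly at any angular velocity one checks directly (the odd-in-$\tau$ integrands vanish at $R=1$) that $F(\Omega,1)=0$ for every $\Omega$, so the whole line $\{(\Omega,1)\}$ is a branch of solutions. The $m$-fold symmetry is imposed by taking $f\in X^k_m$. The first and most delicate analytic task is to set up $F$ as a $C^2$ map between the right Banach spaces. Because the denominators reduce at $R=1$ to $|2\sin(\frac{x-y}{2})|^{\al}$, and the difference structure $R'(y)-R'(x)$ (resp.\ $R(x)-R(y)$) against this kernel reproduces to leading order a fractional operator of order $\al$ applied to $f$, the map $\sum_i F_i$ loses $\al$ derivatives; since $\al>1$ this dominates the order-one term $\Omega R'$. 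The natural setting is therefore $F\colon \mathbb{R}\times(1+V)\to Y^{k-\al}_m$, with $V$ a neighbourhood of $0$ in $X^k_m$ and $Y^{k-\al}_m$ the odd, $m$-fold fractional Sobolev space $H^{k-\al}$. Establishing that $F$ is well defined and $C^2$ on this domain, for the genuinely singular range $1<\al<2$, is where one must organize the cancellations coming from the differences and from the prefactors $1/R(x)$, $R'(x)/R(x)$ so that each $F_i$ lands in $H^{k-\al}$ with controlled dependence on $f$.

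Next I would compute the linearized operator $\mathcal{L}_\Omega:=\partial_R F(\Omega,1)$. Since the background profile is the unit disk, $\mathcal{L}_\Omega$ commutes with rotations and acts diagonally on the basis $\{\cos(nmx)\}_{n\geq 1}$ of $X^k_m$, sending $\cos(nmx)$ to a multiple of $\sin(nmx)$ (so that $\mathcal{L}_\Omega\colon X^k_m\to Y^{k-\al}_m$). Linearizing each $F_i$ about $R=1$ reduces the eigenvalue computation to the moment integrals
\begin{align*}
\int_0^{2\pi}\frac{\cos(\tau)\,(\cos(nm\tau)-1)}{|2\sin(\frac{\tau}{2})|^{\al}}\,d\tau
\end{align*}
together with the variants produced by the three pieces; these are classical and evaluate to ratios of Gamma functions. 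Assembling them, the eigenvalue $\mu_n(\Omega)$ of $\mathcal{L}_\Omega$ on $\cos(nmx)$ vanishes exactly at $\Omega=\Omega_m^{(n)}$, where, setting $\Lambda(j):=\frac{\Gamma(j+\frac{\al}{2})}{\Gamma(1+j-\frac{\al}{2})}$,
\begin{align*}
\Omega_m^{(n)} = -2^{\al-1}\frac{\Gamma(1-\al)}{\left(\Gamma(1-\frac{\al}{2})\right)^2}\Big(\Lambda(1)-\Lambda(nm)\Big),\qquad \Lambda(1)=\frac{\Gamma(1+\frac{\al}{2})}{\Gamma(2-\frac{\al}{2})}.
\end{align*}
In particular $\Omega_m=\Omega_m^{(1)}$ is precisely the value at which $\cos(mx)$ is annihilated.

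To apply Crandall--Rabinowitz at $\Omega=\Omega_m$ I must verify three conditions. First, $\ker\mathcal{L}_{\Omega_m}$ is one-dimensional: $\mu_n(\Omega_m)=0$ forces $\Omega_m^{(n)}=\Omega_m^{(1)}$, i.e.\ $\Lambda(nm)=\Lambda(m)$, which by the strict monotonicity of $j\mapsto\Lambda(j)$ for $1<\al<2$ holds only for $n=1$; hence $\ker\mathcal{L}_{\Omega_m}=\operatorname{span}\{\cos(mx)\}$. Second, $\mathcal{L}_{\Omega_m}$ is Fredholm of index zero: it is a Fourier multiplier whose symbol grows like $(nm)^{\al}$, hence an isomorphism $X^k_m\to Y^{k-\al}_m$ off the single vanishing mode, so the cokernel is also one-dimensional and spanned by $\sin(mx)$. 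Third, transversality: $\partial_\Omega\partial_R F(\Omega_m,1)[\cos(mx)]=\partial_x\cos(mx)=-m\sin(mx)$, which lies outside $\operatorname{Range}(\mathcal{L}_{\Omega_m})$ because the range omits exactly the mode $\sin(mx)$ generating the cokernel. Crandall--Rabinowitz then yields a local $C^1$ curve of nontrivial solutions $(\Omega,R)$ with $R-1\in X^k_m$ bifurcating from $(\Omega_m,1)$.

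The main obstacle is the first step: making $F$ a smooth map into the correct fractional target for $1<\al<2$, where the kernel $|2\sin(\frac{x-y}{2})|^{-\al}$ is non-integrable and the net loss of $\al$ derivatives must be absorbed. Everything downstream --- the diagonalization, the Gamma-function evaluation of the moments, the monotonicity of $\Lambda$, and the transversality --- is either classical or a direct consequence of the eigenvalue formula; the genuine work lies in the singular-integral estimates guaranteeing $F\in C^2$ and the index-zero Fredholm property of $\mathcal{L}_{\Omega_m}$ between the chosen spaces.
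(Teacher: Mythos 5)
Your proposal is correct and follows essentially the same route as the paper: a Crandall--Rabinowitz argument at the trivial branch $F(\Omega,1)=0$, the same Gamma-function dispersion relation (your $\Omega_m^{(n)}$ is the paper's $\Omega_{nm}$), one-dimensionality of the kernel via strict monotonicity of $\Lambda(j)=\Gamma\left(j+\frac{\al}{2}\right)/\Gamma\left(1+j-\frac{\al}{2}\right)$, the codimension-one closed range from the $\sim(nm)^{\al}$ multiplier growth, and transversality via $F_{\Omega R}(\Omega_m,1)(\cos(mx))=-m\sin(mx)\notin\mathrm{Range}(\mathcal{F})$. The only differences are cosmetic: your spaces $X^{k}_{m}\to Y^{k-\al}_{m}$ coincide, after relabeling $k\mapsto k+\al-1$, with the paper's choice $X^{k+\al-1}_{m}\to Y^{k-1}_{m}$ (the paper's Step 4 in fact works in your convention, matching the theorem's statement $R-1\in X^k_m$), and the singular-integral work you defer as ``the main obstacle'' is precisely where the paper invests its effort, namely splitting the kernel into a bounded part plus $\left(R(x)^2+R'(x)^2\right)^{-\al/2}2^{-\al}\left|\sin\left(\frac{x-y}{2}\right)\right|^{-\al}$ and exploiting the characterization of the $H^{k+\al-1}$ norm as $\|f\|_{H^{k}}+\left\|\int\left(\pa^{k}f(\cdot)-\pa^{k}f(y)\right)\left|\sin\left(\frac{\cdot-y}{2}\right)\right|^{-\al}dy\right\|_{L^{2}}$, so that the most singular term is absorbed into the norm itself.
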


Both proofs are carried out by means of a combination of a Crandall-Rabinowitz's theorem and a priori estimates.

\begin{rem}
 We remark that there is a lot of room for improvement of the regularity, and the choice $k \geq 3$ is far from being optimal. Here we are only interested on finding one such $k$ that shows the theorem, and not the sharpest one in terms of regularity.
\end{rem}

In the final section we deal with the regularity of the boundary and its convexity. We are able to show the following theorem:

\begin{theorem}
\label{teoremaregularidadconvexidad}
 Let $0 < \alpha < 2$. Let $R(x)$ be an $m$-fold solution of \eqref{funcionalvstates} which is close to the disk. Then, $R(x)$ belongs to $C^{\infty}$ and it parametrizes a convex patch.
\end{theorem}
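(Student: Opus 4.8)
The plan is to prove regularity by a bootstrapping argument on the contour equation, treating the rotating solution as a solution of a nonlinear equation $F(\Omega, R) = 0$ and gaining smoothness through the structure of the integral operators $F_i$. Starting from the hypothesis that $R$ is close to the disk, we already know $R - 1 \in X^{k}_m$ (or $X^{k+\log}_m$) for the value of $k$ furnished by Theorems \ref{teoremaexistenciaalpha1} and \ref{teoremaexistenciaalphamayor1}. The first step is to rewrite \eqref{funcionalvstates} so as to isolate the leading-order (most singular) linear operator acting on $R$ — morally a fractional derivative of order $\alpha - 1$ or a logarithmic derivative in the $\alpha = 1$ case — from the lower-order remainder. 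Since we are near the disk, the denominators $\left(\left(R(x)-R(y)\right)^2 + 4R(x)R(y)\sin^2\left(\frac{x-y}{2}\right)\right)^{\alpha/2}$ are uniformly bounded below away from the diagonal and the only loss of derivatives comes from the singularity as $y \to x$.

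The heart of the argument, as the introduction signals (``one has to invert the most singular operator onto the less singular one to be able to bootstrap''), is to show that the equation can be written schematically as $\mathcal{L}_\alpha R = \mathcal{N}(R)$, where $\mathcal{L}_\alpha$ is the dominant singular linear operator and $\mathcal{N}$ collects the remaining terms, which are smoothing relative to $\mathcal{L}_\alpha$. Concretely, I would split each $F_i$ into a principal part where $R(x), R(y)$ are frozen to their background value on the disk, yielding a convolution-type operator that is an explicit Fourier multiplier, plus a commutator-type remainder in which differences such as $R(x) - R(y)$ or $R'(x) - R'(y)$ produce an extra factor vanishing on the diagonal, hence gaining a derivative. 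The next step is to establish the mapping properties: the remainder $\mathcal{N}$ maps $X^{k}_m$ (or $X^{k+\log}_m$) into a space that is one order more regular than the image of $\mathcal{L}_\alpha$, so that inverting $\mathcal{L}_\alpha$ — which is bounded below on the relevant Fourier modes, precisely the nondegeneracy underlying the Crandall--Rabinowitz spectral analysis — returns $R$ in $X^{k+1}_m$. Iterating this gain yields $R - 1 \in X^{k}_m$ for every $k$, hence $R \in C^{\infty}$ by Sobolev embedding.

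The main obstacle, and the step I expect to be genuinely delicate, is the commutator estimate: one must show that the difference between the full nonlocal operator in $F_i$ and its frozen-coefficient principal part is smoothing by a full derivative uniformly in the relevant range of $\alpha$, including the borderline case $\alpha = 1$ where the leading operator is logarithmic (this is exactly why the $X^{k+\log}$ spaces were engineered, and why the physical-space characterization in Proposition \ref{alternativexlog} matters). The difficulty is compounded by the fact that the smoothing gain must survive differentiation: when one applies $\partial_x^k$ to the equation, the Leibniz expansion produces terms where derivatives fall on the singular kernel, and one must check that these are controlled by lower-order quantities already known to be regular. A careful decomposition, using that each increment $R(x) - R(x - y)$ contributes a factor comparable to $|\sin(y/2)|$ near the diagonal, should reduce these to integrals with integrable singularities, but verifying this at every order and tracking the $\alpha$-dependent constants is where the real work lies.

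Once full regularity is in hand, convexity follows comparatively easily: since the bifurcating solutions are, by construction, arbitrarily close to the disk in a sufficiently strong norm (controlling at least $R, R', R''$ uniformly), the signed curvature of the boundary, which is a continuous functional of $(R, R', R'')$ and strictly positive for the disk, remains positive along the whole branch near the bifurcation point. I would make this quantitative by writing the curvature of the curve $(R(x)\cos x, R(x)\sin x)$ explicitly in terms of $R, R', R''$, noting it equals a positive constant plus terms that are small when $R - 1$ is small in $C^2$, and invoking the $C^\infty$ (hence $C^2$) bound together with the smallness from the bifurcation to conclude strict convexity.
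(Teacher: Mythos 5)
Your high-level plan (isolate the most singular operator, invert it, bootstrap, then read convexity off the explicit curvature of $(R(x)\cos x, R(x)\sin x)$) is indeed the paper's strategy, and your convexity paragraph is exactly Proposition \ref{convexitypatches}. But the central analytic step, as you set it up, has a genuine gap. You freeze the coefficients at the disk, so your $\mathcal{L}_\alpha$ is the linearization of $F$ at $R=1$, and you justify inverting it by ``the nondegeneracy underlying the Crandall--Rabinowitz spectral analysis.'' This is backwards: by Proposition \ref{representationOperator} that linearization acts on the mode $\cos(kx)$ by the multiplier $-k(\Omega-\Omega_k)$, and bifurcation at $\Omega_m$ occurs precisely because this multiplier vanishes at $k=m$, i.e.\ the frozen operator has a kernel there. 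Along the bifurcating branch $\Omega$ is close to (but not equal to) $\Omega_m$, so the multiplier at the mode $m$ is nonzero but arbitrarily small, and $\|\mathcal{L}_\alpha^{-1}\|$ blows up as the solutions approach the disk. This interacts fatally with your second claim, namely that the commutator remainder is ``smoothing by a full derivative'': it is not. The difference between the true kernel and the frozen one, evaluated against the top-order difference $\partial^k R(x)-\partial^k R(x-y)$, has exactly the same singularity $|\sin(y/2)|^{-\alpha}$ as the leading operator; at $y=0$ the bracket $\bigl(\tfrac{R(x)-R(x-y)}{2\sin(y/2)}\bigr)^2+R(x)R(x-y)-1$ tends to $(R'(x))^2+(R(x))^2-1$, which does not vanish, so what one gains is only a small prefactor $C_r$, not a derivative. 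The correct structure is therefore $(LI+S)\,\partial^{k-1}R=g(R)$ with $S$ of the same order but small norm and $g$ genuinely smoother (the paper's decomposition), and inverting $LI+S$ by Neumann series needs $\|S\|<1/\|LI^{-1}\|$ --- exactly what fails in your setup, since both $C_r$ and $1/\|\mathcal{L}_\alpha^{-1}\|$ tend to $0$ along the branch with no control on their ratio. The paper states this obstruction explicitly just before \eqref{ecuaciontildeISG}.

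The paper escapes this in three different, $\alpha$-dependent ways, none of which is ``invert the full frozen linearization.'' For $0<\alpha<1$ the inverted operator is only the transport part $\bigl(\Omega+C(\alpha)\int\cos(y)(4\sin^2(y/2))^{-\alpha/2}dy\bigr)\partial$, whose coefficient at $\Omega=\Omega_m$ equals $2^{\alpha-1}\Gamma(1-\alpha)\Gamma\left(m+\frac{\alpha}{2}\right)/\bigl(\Gamma\left(1-\frac{\alpha}{2}\right)^2\Gamma\left(1+m-\frac{\alpha}{2}\right)\bigr)\neq 0$ (Lemma \ref{lemmaISalphamenor1}), hence uniformly invertible near $\Omega_m$. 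For $1<\alpha<2$ it is the constant-coefficient fractional operator alone, which is strictly stronger than transport, so the term $-\Omega\,\partial^k R$ can be moved into $g(R)$; its Fourier multiplier never vanishes, independently of $\Omega$ (Lemma \ref{lemmaISalphamayor1}). For $\alpha=1$ neither trick is available (the transport and logarithmic parts are comparable, so transport must stay in $LI$), and the paper instead composes with the projection $P_{>m}$ onto frequencies strictly greater than $m$, where $-k(\Omega-\Omega_k)$ is uniformly bounded below, and treats the complementary low-frequency piece $R^{low}$ separately using that a finite Fourier sum is analytic (Corollary \ref{bootstrapalpha1}). Your scheme could be repaired along these lines --- for instance by performing your frozen-coefficient inversion only after projecting onto modes above $m$ --- but as written the inversion fails at exactly the mode where the bifurcation lives.
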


To do so, we will invert a singular integral operator to gain regularity and bootstrap. It is not necessary to derive an explicit formula as in \cite{Hmidi-Mateu-Verdera:rotating-vortex-patch} using the structure of the kernel. This agrees with the discussion in \cite[Remark 3]{Hassainia-Hmidi:v-states-generalized-sqg}.

\section{Existence in the case $\alpha=1$}
\label{sectionexistencealpha1}
This section is devoted to show Theorem \ref{teoremaexistenciaalpha1}.

\begin{proof}
 The proof will be divided into 6 steps. These steps correspond to check the hypotheses of the Crandall-Rabinowitz theorem \cite{Crandall-Rabinowitz:bifurcation-simple-eigenvalues} for $$F(\Omega,R)=\Omega R'-\sum_{i=1}^3F_i(R),$$
where
\begin{align*}
F_1(R)=&\frac{1}{2\pi R(x)}\int\frac{\sin(x-y)}{\left(\left(R(x)-R(y)\right)^2+4R(x)R(y)\sin^2\left(\frac{x-y}{2}\right)\right)^\frac{1}{2}}
\left(R(x)R(y)+R'(x)R'(y)\right)dy,\\
F_2(R)=&\frac{1}{2\pi }\int\frac{\cos(x-y)}{\left(\left(R(x)-R(y)\right)^2+4R(x)R(y)\sin^2\left(\frac{x-y}{2}\right)\right)^\frac{1}{2}}
\left(R'(y)-R'(x)\right)dy,\\
F_3(R)=&\frac{R'(x)}{2\pi R(x) }\int\frac{\cos(x-y)}{\left(\left(R(x)-R(y)\right)^2+4R(x)R(y)\sin^2\left(\frac{x-y}{2}\right)\right)^\frac{1}{2}}
\left(R(x)-R(y)\right)dy,
\end{align*}
and they are the following:
\begin{enumerate}
\item The functional $F$ satisfies $$F(\Omega,R)\,:\, \R\times \{1+V^r\}\mapsto Y^{k-1},$$ where $V^r$ is the open neighborhood of 0
$$V^r=\{ f\in X^{k+\log}\,:\, ||f||_{X^{k+\log}}<r\},$$
for all $0<r<1$ and $k\geq 3$.
\item $F(\Omega,1) = 0$ for every $\Omega$.
\item The partial derivatives $F_{\Omega}$, $F_{R}$ and $F_{R\Omega}$ exist and are continuous.
\item Ker($\mathcal{F}$) and $Y^{k-1}$/Range($\mathcal{F}$) are one-dimensional, where $\mathcal{F}$ is the linearized operator around the disk $R = 1$ at $\Omega = \Omega_m$.
\item $F_{\Omega R}(\Omega_m,1)(h_0) \not \in$ Range($\mathcal{F}$), where Ker$(\mathcal{F}) = \langle h_0 \rangle$.
\item Step 1 can be applied to the spaces $X^{k+\log}_{m}$ and $Y^{k-1}_{m}$ instead of $X^{k+\log}$ and $Y^{k-1}$.
\end{enumerate}

\subsection{Step 1}

In order to prove that
$$F(\Omega,R)\,:\, \R\times \{1+V^r\}\mapsto Y^{k-1},$$ where $V^r$ is the open neighborhood of 0
$$V^r=\{ f\in X^{k+\log}\,:\, ||f||_{X^{k+\log}}<r\},$$
for all $0<r<1$ and $k\geq 3$,
we will deal with the most singular terms. For example we will not give details about the bound on the $L^2-$ norm of $F(R)$ and we focus on the following proposition:

\begin{prop}\label{prop1}
Let $0 < r < 1$, $k \geq 3$.  Then
 $$F(\Omega,R): \mathbb{R} \times \{1+V^r\} \mapsto Y^{k-1}$$
\end{prop}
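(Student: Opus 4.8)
The plan is to split $F(\Omega,R)=\Omega R'-\sum_{i=1}^3 F_i(R)$ and treat the elementary term $\Omega R'$ separately from the three nonlinear terms, verifying for each $F_i$ two things: that it is an \emph{odd} function of $x$, so that it has a purely sine Fourier series and lands in $Y$, and that it possesses $k-1$ square-integrable derivatives. Since $R-1\in X^{k+\log}\subset H^k$ with $k\geq 3$, the embedding $H^k(\mathbb{T})\hookrightarrow C^{k-1}$ makes $R$ a small $C^2$ perturbation of the constant $1$; the smallness $\|R-1\|_{X^{k+\log}}<r<1$ guarantees that $R$ stays positive and bounded and that the denominator is non-degenerate, in the sense that
\[
\left(\left(R(x)-R(y)\right)^2+4R(x)R(y)\sin^2\left(\frac{x-y}{2}\right)\right)^{1/2}\gtrsim \left|\sin\left(\frac{x-y}{2}\right)\right|.
\]
The term $\Omega R'$ is then immediate: $R-1\in H^k$ has only cosine modes, so $R'\in H^{k-1}$ has only sine modes, whence $\Omega R'\in Y^{k-1}$ for every $\Omega$.

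For the parity I would show that $R(-x)=R(x)$ forces $F_i(R)(-x)=-F_i(R)(x)$. Applying simultaneously the substitutions $x\mapsto -x$ and $y\mapsto -y$ and using $R(-x)=R(x)$, $R'(-x)=-R'(x)$, together with the evenness of $\cos$ and of $\sin^2(\tfrac{\cdot}{2})$ and the oddness of $\sin$, every integrand changes sign while the torus is preserved; the odd prefactor $R'(x)/R(x)$ in $F_3$ accounts for its sign as well. Hence each $F_i(R)$ is odd and has a purely sine expansion, which combined with the regularity below yields $F_i(R)\in Y^{k-1}$.

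The regularity $F_i(R)\in H^{k-1}$ is the heart of the matter. I would compute $\partial_x^{k-1}F_i(R)$ by Leibniz and classify the resulting terms. Away from the diagonal the kernel is smooth, so those contributions are harmless; near $y=x$ the denominator behaves like $|\sin\tfrac{x-y}{2}|$ (the case $\alpha=1$), while each numerator carries a factor vanishing on the diagonal ($\sin(x-y)$ in $F_1$, the difference $R'(y)-R'(x)$ in $F_2$, and $R(x)-R(y)$ in $F_3$) that compensates the singularity. All contributions in which the top derivative $\partial_x^{k}R$ does \emph{not} appear can then be bounded in $L^2$ by Calderón–Zygmund / commutator estimates: the compensating factors keep the kernels of singular-integral type, and the lower-order coefficients $R,R',\dots,R^{(k-1)}\in L^\infty$ can be pulled out using that $H^{k-1}$ is an algebra for $k-1>1/2$. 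The one genuinely dangerous contribution is the borderline piece carrying $\partial_x^{k}R$, which is naively paired with a logarithmically divergent kernel and cannot be estimated termwise.

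The resolution — and the main obstacle — is to regroup these top-order pieces before estimating. Exploiting the difference structure already present in the numerators together with symmetrization of the kernel in $y\mapsto -y$, the borderline terms combine so that $\partial_x^{k}R$ enters only through the convergent difference $\partial_x^{k}R(x-y)-\partial_x^{k}R(x)$. After this cancellation the worst surviving term is, up to a multiplicative constant and an $L^2$-bounded remainder,
\[
\int \frac{\partial_x^{k}R(x-y)-\partial_x^{k}R(x)}{\left|\sin\left(\frac{y}{2}\right)\right|}\,dy,
\]
which, since $\partial_x^{k}R=\partial_x^{k}(R-1)$, is exactly the quantity whose $L^2(x)$-norm is required to be finite in the definition of $X^{k+\log}$ (equivalently, controlled via Proposition \ref{alternativexlog}). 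Thus membership in $X^{k+\log}$ is precisely the hypothesis needed to close the top-order estimate; collecting all pieces gives $\partial_x^{k-1}F_i(R)\in L^2$, hence $F_i(R)\in H^{k-1}$, and with the parity $F(\Omega,R)\in Y^{k-1}$. The delicate point to watch is the bookkeeping of this cancellation: one must check that every term not absorbed into the displayed log-integral is genuinely paired with an integrable or singular-integral kernel, which is where the extra smoothness from $k\geq 3$ provides the room.
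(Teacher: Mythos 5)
Your proposal is correct and takes essentially the same approach as the paper: both arguments decompose the kernel into a non-singular part handled by convolution-type bounds (the paper uses Young's inequality on kernels in $\mathcal{H}_0$, you invoke Calder\'on--Zygmund/commutator estimates) plus the exact singular part $\frac{1}{(R(x)^2+R'(x)^2)^{1/2}}\frac{1}{2\left|\sin\left(\frac{x-y}{2}\right)\right|}$, whose action on the top-order difference $\partial^k R(x-y)-\partial^k R(x)$ is precisely the quantity that the $X^{k+\log}$ norm controls. One cosmetic remark: the symmetrization in $y\mapsto -y$ you invoke is not needed, since the difference structure of the numerator $R'(y)-R'(x)$ in $F_2$ (and the diagonal-vanishing factors $\sin(x-y)$ and $R(x)-R(y)$ in $F_1$, $F_3$) survives differentiation after the change of variables $x-y\mapsto y$, which is exactly how the paper produces the difference $\partial^k R(y)-\partial^k R(x)$ at top order.
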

\begin{proof}
In order to show this proposition we will use the following decomposition
\begin{align*}
\frac{1}{\left(\left(R(x)-R(y)\right)^2+4R(x)R(y)\sin^2\left(\frac{x-y}{2}\right)\right)^\frac{1}{2}}
= K_{S}(x,y)+\frac{1}{\left(R(x)^2+R'(x)^2\right)^\frac{1}{2}}\frac{1}{2\left|\sin\left(\frac{x-y}{2}\right)\right|},
\end{align*}
where the kernel
\begin{align*}
K_S(x,y)\equiv \frac{1}{\left(\left(R(x)-R(y)\right)^2+4R(x)R(y)\sin^2\left(\frac{x-y}{2}\right)\right)^\frac{1}{2}}-
\frac{1}{\left(R(x)^2+R'(x)^2\right)^\frac{1}{2}}\frac{1}{2\left|\sin\left(\frac{x-y}{2}\right)\right|},
\end{align*}
is not singular at $x=y$.

Since we can write
\begin{align*}
\cos(x)=&1-2\sin^2\left(\frac{x}{2}\right),
\end{align*}
we have that
\begin{align*}
 &\frac{\cos(x-y)}{\left(\left(R(x)-R(y)\right)^2+4R(x)R(y)\sin^2\left(\frac{x-y}{2}\right)\right)^\frac{1}{2}}\\&=
\cos(x-y)K_S(x,y)+  \frac{1}{\left(R(x)^2+R'(x)^2\right)^\frac{1}{2}}\frac{1}{2\left|\sin\left(\frac{x-y}{2}\right)\right|}
-\frac{\left|\sin\left(\frac{x-y}{2}\right)\right|}{\left(R(x)^2+R'(x)^2\right)^\frac{1}{2}}
\end{align*}
Let's bound $F_1$. We will split $F_1$ into two terms,
\begin{align*}
F_1=&\frac{1}{2\pi R(x)}\int\frac{\sin(x-y)}{\left(\left(R(x)-R(y)\right)^2+4R(x)R(y)\sin^2\left(\frac{x-y}{2}\right)\right)^\frac{1}{2}}
R(x)R(y)dy\\
&+\frac{1}{2\pi R(x)}\int\frac{\sin(x-y)}{\left(\left(R(x)-R(y)\right)^2+4R(x)R(y)\sin^2\left(\frac{x-y}{2}\right)\right)^\frac{1}{2}}
R'(x)R'(y)dy\\
\equiv & F_{11}+F_{12}
\end{align*}
and we will focus on $F_{12}$ since it is the most singular one.

Making the change of variable  $x-y \mapsto y$, taking $\pa^{k-1}$ derivatives with respect to $x$  and changing back again to $y\mapsto x-y$ yields
\begin{align*}
\pa^{k-1}F_{12}=&\frac{(-1) \pa^{k-1}R(x)}{2\pi R(x)^{2}}\int\frac{\sin(x-y)}{\left(\left(R(x)-R(y)\right)^2+4R(x)R(y)\sin^2\left(\frac{x-y}{2}\right)\right)^\frac{1}{2}}
R'(x)R'(y)dy\\
+&\frac{1}{2\pi R(x)}\int\frac{\sin(x-y)}{\left(\left(R(x)-R(y)\right)^2+4R(x)R(y)\sin^2\left(\frac{x-y}{2}\right)\right)^\frac{1}{2}}
\left(R(x)\pa^kR(y)+\pa^kR(x)R(y)\right)dy\\
&-\frac{1}{2\pi R(x)}\int\frac{\sin(x-y)R'(x)R'(y)}{\left(\left(R(x)-R(y)\right)^2+4R(x)R(y)\sin^2\left(\frac{x-y}{2}\right)\right)^\frac{3}{2}}
\\ & \times \left((R(x)-R(y))\left(\pa^{k-1}R(x)-\pa^{k-1}R(y)\right)+2\left(\pa^{k-1}R(x)R(y)+R(x)\pa^{k-1}R(y)\right)\sin^2\left(\frac{x-y}{2}\right)\right)dy
\\&+ \text{l.o.t},
\end{align*}
where l.o.t stands for lower order terms.

\begin{defi}
 Let $k \in \mathbb{R}$. We denote by $\mathcal{H}_{k}$ the set of functions $f(x,y)$ that satisfy the following estimates:

\begin{align*}
\sup_{x\in \T} \left\|\frac{f(x,\cdot)}{\sin(\cdot)^{k}}\right\|_{L^1(\T)}\leq C, \quad  \sup_{y \in \T}\left\|\frac{f(\cdot,y)}{\sin(y)^{k}}\right\|_{L^1(\T)}\leq C,
\end{align*}

where $C$ is a constant. Its dependence will be clear on the context.

\end{defi}

 Since the kernel
$$a(x,y)\equiv \frac{\sin(x-y)}{\left(\left(R(x)-R(y)\right)^2+4R(x)R(y)\sin^2\left(\frac{x-y}{2}\right)\right)^\frac{1}{2}}$$
belongs to $\mathcal{H}_0$
we can apply Young's inequality to prove that
$$||\pa^{k-1} F_{1}||_{L^2(\T)} \leq C\left(||R||_{X^{k+\log}},r\right).$$
Next we bound $\pa^{k-1} F_2$. Making the change of variable  $x-y \mapsto y$, taking $\pa^{k-1}$ derivatives with respect to $x$  and changing again to $y\mapsto x-y$ yields
\begin{align*}
\pa^{k-1} F_{2}& =\frac{1}{2\pi }\int\frac{\cos(x-y)}{\left(\left(R(x)-R(y)\right)^2+4R(x)R(y)\sin^2\left(\frac{x-y}{2}\right)\right)^\frac{1}{2}}
\left(\pa^k R(y)-\pa^k R(x)\right)dy\\
& -\frac{1}{2\pi }\int\frac{\cos(x-y)}{\left(\left(R(x)-R(y)\right)^2+4R(x)R(y)\sin^2\left(\frac{x-y}{2}\right)\right)^\frac{3}{2}}\left(\pa^{k-1} R(y)-\pa^{k-1} R(x)\right) \\
& \times \left((R(x)-R(y))(R'(x)-R'(y)) + 2(R(x)R'(y) + R'(x)R(y))\sin^{2}\left(\frac{x-y}{2}\right)\right)dy\\
&+ \text{l.o.t.}
\end{align*}
We will split the first term as follows
\begin{align*}
&\frac{1}{2\pi }\int \cos(x-y)K_{S}(x,y)\left(\pa^k R(y)-\pa^kR(x)\right)dy\\
&-\frac{1}{2\pi}\int \frac{\left|\sin\left(\frac{x-y}{2}\right)\right|}{\left(R(x)^2+R'(x)^2\right)^\frac{1}{2}}\left(\pa^k R(y)-\pa^k R(x)\right)dy\\
&+\frac{1}{4\pi \left(R(x)^2+R'(x)^2\right)^\frac{1}{2}} \int \frac{(\pa^k R(y)-\pa^k R(x))}{\left|\sin\left(\frac{x-y}{2}\right)\right|}dy.
\end{align*}
Therefore, since $K_S \in \mathcal{H}_0$
and because of the definition of the space $X^{k+\log}$ we have that
\begin{align*}
||\pa^{k-1} F_2||_{L^2(\T)}\leq C\left(||R||_{X^{k+\log}},r\right),
\end{align*}

since the first integral is bounded via Young's inequality, the second one is of lower order and the third one is precisely part of the definition of the $X^{k+\log}$ spaces.

To estimate the second, we subtract and add the following term:

\begin{align*}
\frac{1}{2\pi }\int\frac{\cos(x-y)}{2\left|\sin\left(\frac{x-y}{2}\right)\right|}\frac{R'(x)R''(x)+R'(x)R(x)}{((R(x))^2+(R'(x))^2)^{3/2}}\left(\pa^{k-1} R(y)-\pa^{k-1} R(x)\right) dy,
\end{align*}

which is clearly in $L^{2}$, so that the resulting kernel coming from the difference, which is acting on $(\pa^{k-1} R(y)-\pa^{k-1}R(x))$, belongs to $\mathcal{H}_{0}$. By using Young's inequality again, we get the desired bound. Finally, the bound for $F_3$ is  easier to get than the one for $F_2$.
\end{proof}
\subsection{Step 2}

If we substitute $R = 1$ in the $F_{i}$, the only term which is not immediately 0 is the first part of $F_{1}$. Therefore, we are left to show that

\begin{align*}
  \int_{-\pi}^{\pi} \frac{\sin(y)}{\left(4\sin^{2}\left(\frac{y}{2}\right)\right)^{1/2}} dy = 0,\\
\end{align*}

but this is automatically true since the integrand is odd. Thus, $F(\Omega,1)=0$ for all $\Omega\in \R$.

\subsection{Step 3}
We need to prove the existence and the continuity of the Gateaux derivatives $\pa_\Omega F(\Omega,R)$, $\pa_R F(\Omega,R)$ and $\pa_{\Omega,R}F(\Omega,R)$. The most difficult part is to show the existence and continuity of $\pa_R F_i(R)$ for $i=1,2,3$ since the dependence on $\Omega$ is linear and the rest follows easily.

\begin{lemma} \label{gatoderivada}For all $R-1\in V^r$ and for all $h\in X^{k+\log}$ such that $||h||_{X^{k+\log}}=1$ we have that
$$\lim_{t \to 0}\frac{F_i(R+th)-F_i(R)}{t}= D_i[R]h\quad \text{in $Y^{k-1}$},$$
where
\begin{align*}
D_1[R] h =& -\frac{h(x)}{2\pi R(x)^2}\int \frac{\sin(x-y)}{\left((R(x)-R(y))^2+4R(x)R(y)\sin^2\left(\frac{x-y}{2}\right)\right)^\frac{1}{2}} \left(R(x)R(y)+R'(x)R'(y)\right)dy\\
& +\frac{1}{2\pi R(x)}\int \frac{\sin(x-y)}{\left((R(x)-R(y))^2+4R(x)R(y)\sin^2\left(\frac{x-y}{2}\right)\right)^\frac{1}{2}}\\&\times(h(x)R(y)+h(y)R(x)+(h'(x)R'(y)+h'(y)R'(x)))dy\\
&-\frac{1}{2\pi R(x)}\int \frac{\sin(x-y)(R(x)R(y)+R'(x)R'(y))}{\left((R(x)-R(y))^2+4R(x)R(y)\sin^2\left(\frac{x-y}{2}\right)\right)^\frac{3}{2}}\\
&\times\left((R(x)-R(y))(h(x)-h(y))+2(h(x)R(y)+h(y)R(x))\sin^2\left(\frac{x-y}{2}\right)\right)dy\\
D_2[R] h = & \frac{1}{2\pi}\int\frac{\cos(x-y)}{\left((R(x)-R(y))^2+4R(x)R(y)\sin^2\left(\frac{x-y}{2}\right)\right)^\frac{1}{2}}\left(h'(y)-h'(x)\right)dy\\
&-\frac{1}{2\pi}\int\frac{\cos(x-y)(R'(y)-R'(x))}{\left((R(x)-R(y))^2+
4R(x)R(y)\sin\left(\frac{x-y}{2}\right)\right)^\frac{3}{2}}\\ & \times \left((R(x)-R(y))(h(x)-h(y))+2(h(x)R(y)+h(y)R(x))\sin^2\left(\frac{x-y}{2}\right)\right)dy\\
D_3[R]h= & \frac{h'(x)}{2\pi R(x)}\int\frac{\cos(x-y)}{\left(\left(R(x)-R(y)\right)^2+4R(x)R(y)\sin^2\left(\frac{x-y}{2}\right)\right)^\frac{1}{2}}(R(x)-R(y))dy\\
&-\frac{R'(x) h(x)}{2\pi R(x)^2}\int\frac{\cos(x-y)}{\left(\left(R(x)-R(y)\right)^2+4R(x)R(y)\sin^2\left(\frac{x-y}{2}\right)\right)^\frac{1}{2}}
(R(x)-R(y)) dy \\
&+\frac{R'(x)}{2\pi R(x)}\int\frac{\cos(x-y)}{\left(\left(R(x)-R(y)\right)^2+4R(x)R(y)\sin^2\left(\frac{x-y}{2}\right)\right)^\frac{1}{2}}(h(x)-h(y))dy\\
&-\frac{R'(x)}{2\pi R(x)}\int \frac{\cos(x-y)(R(x)-R(y))}{\left((R(x)-R(y))^2+
4R(x)R(y)\sin\left(\frac{x-y}{2}\right)\right)^\frac{3}{2}}\\ & \times \left((R(x)-R(y))(h(x)-h(y))+2(h(x)R(y)+h(y)R(x))\sin^2\left(\frac{x-y}{2}\right)\right)dy.
\end{align*}

Moreover, $D_i[R] h$ are continuous in $R$.
\end{lemma}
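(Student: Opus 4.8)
The plan is to obtain the formulas for $D_i[R]h$ by formally differentiating $F_i(R+th)$ under the integral sign at $t=0$, and then to justify this rigorously by proving that the difference quotient converges in $Y^{k-1}$. Writing $D(x,y) = (R(x)-R(y))^2 + 4R(x)R(y)\sin^2\!\big(\tfrac{x-y}{2}\big)$, each $F_i$ is a product of a prefactor (a power of $1/R(x)$, possibly times $R'(x)$), an integral against the kernel $D^{-1/2}$, and polynomial factors in $R,R'$. Applying the product rule to these three pieces and collecting the $O(t)$ contributions reproduces exactly the displayed lines: differentiating $1/R(x)^j$ yields the terms with $h(x)$ in front, differentiating the polynomial factors yields the terms linear in $h,h'$ inside the integral, and differentiating $D^{-1/2}=-\tfrac12 D^{-3/2}\,\pa_t D$ yields the most singular terms carrying the power $\tfrac32$. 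Thus the only real work is analytic justification.

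The substantive claim is that $\tfrac{1}{t}\big(F_i(R+th)-F_i(R)\big)-D_i[R]h\to 0$ in $H^{k-1}$ as $t\to0$. First I would reduce to estimating $\pa^{k-1}$ of this expression in $L^2(\T)$, the lower-order Sobolev norms being easier and following the same lines. After differentiating $k-1$ times I would organize the terms exactly as in the proof of Proposition \ref{prop1}, isolating the genuinely singular contributions, namely those in which $\pa^{k-1}$ falls on the highest-order factor while the kernel retains its full singularity. For each such contribution I would use the splitting
\[
\frac{1}{D(x,y)^{1/2}} = K_S(x,y) + \frac{1}{(R(x)^2+R'(x)^2)^{1/2}}\frac{1}{2\big|\sin\!\big(\tfrac{x-y}{2}\big)\big|},
\]
with $K_S\in\mathcal H_0$, so that the regular part is controlled by Young's inequality and the explicit singular part is absorbed by the defining integral of $X^{k+\log}$. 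Since $R-1\in V^r$ and $h\in X^{k+\log}$, all denominators $D(x,y)$ and $R(x)^2+R'(x)^2$ remain uniformly bounded below for $|t|$ small, which keeps every kernel in the class $\mathcal H_0$ and makes Young's inequality applicable throughout.

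For the convergence itself I would expand $t\mapsto D(x,y;R+th)^{-1/2}$ and the polynomial factors via the integral form of Taylor's theorem, producing a remainder that is $O(t)$ times a kernel of the same type acting on derivatives of $R$ and $h$; dominated convergence, justified by the uniform lower bounds above, then gives convergence to $0$ in $L^2(\T)$. The main obstacle is precisely this step. Differentiating the kernel produces the $D^{-3/2}$ factor paired either with $(\pa^{k-1}R(x)-\pa^{k-1}R(y))$ or, in the terms coming from the polynomial factors, with $\pa^{k}h$; here one must exploit the extra cancellation in the numerator (the factors $\sin^2\!\big(\tfrac{x-y}{2}\big)$ and $R(x)-R(y)$ soften the singularity) together with the fact that both $R-1$ and $h$ lie in $X^{k+\log}$, so that the logarithmic gain of the space closes the top-order singular integrals $\big\|\int \tfrac{\pa^{k}g(x-y)-\pa^{k}g(x)}{|\sin(\frac{y}{2})|}\,dy\big\|_{L^2}$ (for $g=R$ and $g=h$) uniformly in $t$, and not merely the $H^{k-1}$ bound.

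Finally, continuity of $R\mapsto D_i[R]h$ follows from the same apparatus. For $R_1,R_2$ near $1$ I would estimate $\|D_i[R_1]h-D_i[R_2]h\|_{Y^{k-1}}$ by writing each kernel difference $D(x,y;R_1)^{-1/2}-D(x,y;R_2)^{-1/2}$ and each prefactor difference as $(R_1-R_2)$ times a kernel that still lies in $\mathcal H_0$, using that the denominators are uniformly bounded below on a neighborhood of the disk. Young's inequality together with the $X^{k+\log}$ control then yields a bound proportional to $\|R_1-R_2\|_{X^{k+\log}}$, which establishes the asserted continuity.
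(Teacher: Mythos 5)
Your proposal is correct and follows essentially the same route as the paper's proof: both reduce to top-order $L^2$ estimates after $k-1$ derivatives, split each kernel into an $\mathcal{H}_0$ part (handled by Young's inequality) plus the explicit $\frac{1}{2\left|\sin\left(\frac{x-y}{2}\right)\right|}$ singularity absorbed by the $X^{k+\log}$ structure of both $R$ and $h$, and prove that the difference quotient minus $D_i[R]h$ is $O(t)$, with continuity obtained by factoring kernel differences through $R_1-R_2$. The only cosmetic difference is that the paper exhibits the $O(t)$ (for the $h'$-terms) and $O(t^2)$ (for the terms where the first-order piece is subtracted) structure by explicit rationalization identities for $DR^{-1/2}-DRh^{-1/2}$, whereas you invoke Taylor's theorem with integral remainder, which packages the same computation.
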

\begin{proof}

We will focus on the term $F_2(R)$ since it is the one that requires a special treatment with respect to the case $\al < 1$ discussed in \cite{Hassainia-Hmidi:v-states-generalized-sqg}. We need to prove that
$$\lim_{t\to 0} \left|\left| \frac{F_2(R+th)-F_2(R)}{t}-D_2[R]h\right|\right|_{H^{k-1}}=0.$$
In order to do it we will use the following notation. For a general function $f(x)$ we will write
\begin{align*}
\Delta f  = &  f(x)-f(y),\\
f=&f(x),  \\
\overline{f}=& f(y),
\end{align*}
and also we will use the following abbreviations
\begin{align*}
DR= & \Dr^2+4 R\oR \sin^2\left(\frac{x-y}{2}\right),\\ DRh=& (\Dr+t\Dh)^2+4 (R+th)(\oR+t\oh) \sin^2\left(\frac{x-y}{2}\right).
\end{align*}

We decompose the term inside the $H^{k-1}$-norm in the following way
\begin{align*}
&\frac{F_2(R+th)-F_2(R)}{t}-D_2[R]h= \\
& \frac{1}{2\pi t}\int \cos(x-y)(R'(x)-R'(y))\\
&\times \left(\frac{1}{\left(\left(\Dr+t\Dh \right)^2+4(R+th)(\oR+t\oh)\sin^2\left(\frac{x-y}{2}\right)\right)^\frac{1}{2}}-
\frac{1}{\left(\left(\Dr\right)^2+4R\oR\sin^2\left(\frac{x-y}{2}\right)\right)^\frac{1}{2}}\right.\\
&\left. + t \frac{\Dr\Dh+2(h\oR+\oh R)\sin^2\left(\frac{x-y}{2}\right)}{\left(\left(\Dr\right)^2+4R\oR\sin^2\left(\frac{x-y}{2}\right)\right)^\frac{3}{2}}\right)dy\\
&+\frac{1}{2\pi}\int \cos(x-y)(h'(x)-h'(y))\\
&\times\left(\frac{1}{\left(\left(\Dr+t\Dh \right)^2+4(R+th)(\oR+t\oh)\sin^2\left(\frac{x-y}{2}\right)\right)^\frac{1}{2}}
-\frac{1}{\left(\left(\Dr\right)^2+4R\oR\sin^2\left(\frac{x-y}{2}\right)\right)^\frac{1}{2}} \right)dy.
&\equiv I_1+I_2
\end{align*}
We will deal with $I_2$ first,
\begin{align*}
I_2=&\frac{1}{2\pi}\int \cos(x-y)(h'(x)-h'(y))\\
&\times\left(\frac{1}{\left(\left(\Dr+t\Dh \right)^2+4(R+th)(\oR+t\oh)\sin^2\left(\frac{x-y}{2}\right)\right)^\frac{1}{2}}
-\frac{1}{\left(\left(\Dr\right)^2+4R\oR\sin^2\left(\frac{x-y}{2}\right)\right)^\frac{1}{2}} \right)dy\\
=&\frac{1}{2\pi}\int \cos(x-y)(h'(x)-h'(y))\left(\frac{DR-DRh}{DR^\frac{1}{2}DRh^\frac{1}{2}\left(DR^\frac{1}{2}+DRh^\frac{1}{2}\right)}\right)dy\\
=&\frac{1}{2\pi}\int \cos(x-y)(h'(x)-h'(y))
\left(\frac{-2t\Dr\Dh-t^2\Dh^2-4(t(h\oR+\oh R)+t^2h\oh) \sin^2\left(\frac{x-y}{2}\right)}{DR^\frac{1}{2}DRh^\frac{1}{2}\left(DR^\frac{1}{2}+DRh^\frac{1}{2}\right)}\right)dy\\
&\equiv I_{21}+I_{22}+I_{23}+I_{24}.
\end{align*}
These four terms can be treated in a similar way. We just give some details about $I_{21}$,
\begin{align*}
I_{21}=-\frac{t}{\pi}\int \cos(x-y)(h'(x)-h'(y))
\left(\frac{\Dr\Dh}{DR^\frac{1}{2}DRh^\frac{1}{2}\left(DR^\frac{1}{2}+DRh^\frac{1}{2}\right)}\right)dy.
\end{align*}
Now we will take $k-1$ derivatives with respect to $x$. As before, we first make the change $x-y\mapsto y$, then we apply $\pa_x^{k-1}$ and finally we make the change $y\mapsto x-y$, thus
\begin{align*}
\pa^{k-1} I_{21}=& -\frac{t}{\pi}\int \cos(x-y)(\pa^kh(x)-\pa^k h(y))\left(\frac{\Dr\Dh}{DR^\frac{1}{2}DRh^\frac{1}{2}\left(DR^\frac{1}{2}+DRh^\frac{1}{2}\right)}\right)dy\\
&+ \text{  l.o.t}
\end{align*}
Now we can decompose the kernel in the previous integral in the following way
\begin{align*}
&\frac{\Dr\Dh}{DR^\frac{1}{2}DRh^\frac{1}{2}\left(DR^\frac{1}{2}+DRh^\frac{1}{2}\right)}=\tilde{K}_{S}\\&
+\frac{R'h'}{(R'^2+R^2)^\frac{1}{2}((R'+th')^2+(R+th)^2)^\frac{1}{2}((R'^2+R^2)^\frac{1}{2}+((R'+th')^2+(R+th)^2)^\frac{1}{2})}
\frac{1}{2\left|\sin\left(\frac{x-y}{2}\right)\right|},
\end{align*}
where we can check that, for $t$ small enough, $\tilde{K}_{S} \in \mathcal{H}_0$.
From this decomposition and for $t$ small enough we have that
\begin{align*}
||\pa^{k-1}I_{21}||_{L^2}\leq t C(||R||_{X^{k+\log}},r).
\end{align*}
Now let's deal with $I_1$. We proceed again with the same strategy as before in order to take $k-1$ derivatives, in such a way that

\begin{align*}
\pa^{k-1}I_1=&\frac{1}{2\pi t}\int \cos(x-y)(\pa^k R(x)-\pa^k R(y))\left(\frac{1}{DRh^\frac{1}{2}}-\frac{1}{DR^\frac{1}{2}}+t\frac{\Dr\Dh+2(h\oR+\oh R)\sin^2\left(\frac{x-y}{2}\right)}{DR^\frac{3}{2}}\right)dy\\
&+ \text{   l.o.t}.
\end{align*}
One can write
\begin{align*}
&\frac{1}{DRh^\frac{1}{2}}-\frac{1}{DR^\frac{1}{2}}+t\frac{\Dr\Dh+2(h\oR+\oh R)\sin^2\left(\frac{x-y}{2}\right)}{DR^\frac{3}{2}}\\
& = t\frac{-\Dr\Dh-2(h\oR+\oh R)\sin^2\left(\frac{x-y}{2}\right)}{DRh^\frac{1}{2} DRh^\frac{1}{2}\frac{DRh^\frac{1}{2}+DR^\frac{1}{2}}{2} }
+t\frac{\Dr\Dh+2(h\oR+\oh R)\sin^2\left(\frac{x-y}{2}\right)}{DR^\frac{3}{2}}\\
& = t \left(\Dr\Dh+2(h\oR+\oh R)\sin^2\left(\frac{x-y}{2}\right)\right)\left(\frac{1}{DR^\frac{3}{2}}-\frac{1}{DR^\frac{1}{2} DRh^\frac{1}{2}\frac{DRh^\frac{1}{2}+DR^\frac{1}{2}}{2}}\right),
\end{align*}
and also
\begin{align*}
\frac{1}{DR^\frac{3}{2}}-\frac{1}{DR^\frac{1}{2} DRh^\frac{1}{2}\frac{DRh^\frac{1}{2}+DR^\frac{1}{2}}{2}}=
\frac{1}{DR^\frac{1}{2}}\left(\frac{DRh^\frac{1}{2}\frac{DRh^\frac{1}{2}+DR^\frac{1}{2}}{2}-DR}{DR DRh^\frac{1}{2}\frac{DRh^\frac{1}{2}+DR^\frac{1}{2}}{2}}\right),
\end{align*}
where
\begin{align*}
DRh^\frac{1}{2}\frac{DRh^\frac{1}{2}+DR^\frac{1}{2}}{2}-DR& = \frac{1}{2}DRh-\frac{1}{2}DR+\frac{1}{2}(DRh^\frac{1}{2}-DR^\frac{1}{2})DR^\frac{1}{2}\\
&= \frac{1}{2}(DRh-DR)+\frac{DR^\frac{1}{2}}{2(DRh^\frac{1}{2}+DR^\frac{1}{2})}(DRh-DR).
\end{align*}

From these formulas it is easy to see that
\begin{align*}
&\frac{1}{DRh^\frac{1}{2}}-\frac{1}{DR^\frac{1}{2}}+t\frac{\Dr\Dh+2(h\oR+\oh R)\sin^2\left(\frac{x-y}{2}\right)}{DR^\frac{3}{2}}=t^2K[R,h,t]
\end{align*}
The $L^2$-bound for the term coming from the kernel $K[R,h,t]$ can be performed in a similar way as we did before.

To prove the continuity of $D_2[R] h$ we notice that, for $R$, $r$ in $V^r$ we have that

\begin{align*}
\frac{1}{DR^\frac{1}{2}}-\frac{1}{Dr^\frac{1}{2}}&=\frac{\Delta r^2 +4r\overline{r}\sin^2\left(\frac{x-y}{2}\right)-\Dr^2-4R\oR\sin^2\left(\frac{x-y}{2}\right)}
{DR^\frac{1}{2}Dr^\frac{1}{2}\left(DR^\frac{1}{2}+Dr^\frac{1}{2}\right)}\\
&=\frac{(\Delta r+\Dr)\Delta(r-R)+ 4\left((r-R)\overline{r}+(\overline{r}-\overline{R})R\right)\sin^2\left(\frac{x-y}{2}\right)}
{DR^\frac{1}{2}Dr^\frac{1}{2}\left(DR^\frac{1}{2}+Dr^\frac{1}{2}\right)}
\end{align*}
From this formula we obtain the estimate
\begin{align*}
||D_2[R] h -D_2[r] h ||_{H^{k-1}}\leq C\left(|| R||_{H^{k+\log}},|| r||_{H^{k+\log}},r\right)|| R-r||_{H^{k+\log}}
\end{align*}
what proves the continuity of $D_2[R]h$. The rest of the estimates can be performed in a similar fashion.
\end{proof}

\subsection{Step 4}

The calculations carried out in this subsection will be more general and include the full range $\al \geq 1$ instead of $\al = 1$. They will be used in the next section.

Before starting Step 4, we compute the linearization of $F$ around the disk ($R(x) \equiv 1$) in the direction $h(x)$. By taking $R=1$ in lemma \ref{gatoderivada} one sees that  this linearization is equal to

\begin{align*}
& \Omega h'(x)  - C(\al) \int \frac{\sin(y)(h(x-y)+h(x))}{\left(4\sin^2\left(\frac{y}{2}\right)\right)^{\al/2}}dy  +  \frac{\al}{2} C(\al) \int \frac{\sin(y)(h(x-y)+h(x))}{\left(4\sin^2\left(\frac{y}{2}\right)\right)^{\al/2}}dy + C(\al) \int \frac{\cos(y)(h'(x)-h'(x-y))}{\left(4\sin^2\left(\frac{y}{2}\right)\right)^{\al/2}}dy \\
& = \Omega h'(x) + \left(\frac{\al}{2}-1\right)C(\al) \int \frac{\sin(y)h(x-y)}{\left(4\sin^2\left(\frac{y}{2}\right)\right)^{\al/2}}dy  + C(\al) \int \frac{\cos(y)(h'(x)-h'(x-y))}{\left(4\sin^2\left(\frac{y}{2}\right)\right)^{\al/2}}dy, \\
\end{align*}

where we have used that

\begin{align*}
\int \frac{\sin(y)}{(4\sin^{2}\left(\frac{y}{2}\right))^{\al/2+1}} dy = 0. \\
\end{align*}

\begin{prop}
\label{representationOperator}
 Let $\al \geq 1$, and let $\displaystyle h = \sum_{j=1}^{\infty}a_k \cos(kx)$. Then,
\begin{align*}
\pa_{R}F(\Omega,1)(h) = -\sum_{k=1}^{\infty} a_k[k(\Omega-\Omega_k)] \sin(kx),
\end{align*}

where $\Omega_k$ is the dispersion set given by

\begin{align*}
 \left\{
\begin{array}{cc}
\displaystyle -2^{\al-1} \frac{\Gamma\left(1-\al\right)}{\left(\Gamma\left(1-\frac{\al}{2}\right)\right)^{2}}\left(\frac{\Gamma\left(1+\frac{\al}{2}\right)}{\Gamma\left(2-\frac{\al}{2}\right)} - \frac{\Gamma\left(k+\frac{\al}{2}\right)}{\Gamma\left(1+k-\frac{\al}{2}\right)}\right) & \alpha \neq 1 \\
 \displaystyle -\frac{2}{\pi} \sum_{j=2}^{k} \frac{1}{2j-1}& \alpha = 1
\end{array}
\right.
\end{align*}

\end{prop}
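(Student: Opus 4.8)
The plan is to use linearity to reduce the whole computation to the action of $\partial_R F(\Omega,1)$ on a single Fourier mode $h(x)=\cos(kx)$, since the operator is linear in $h$ and the two integral terms appearing in the linearization displayed just above the statement are convolution-type operators that respect the Fourier decomposition. First I would substitute $h(x)=\cos(kx)$, $h'(x)=-k\sin(kx)$, into the linearized expression and expand $\cos(k(x-y))$ and $\sin(k(x-y))$ by the angle-addition formulas, separating each integral into a $\cos(kx)$-part and a $\sin(kx)$-part. The point is that on a symmetric period of $\T$ the $\cos(kx)$-contributions have integrands that are odd in $y$ (products of $\sin(y)$ or $\sin(ky)$ against the even weight $(4\sin^2(y/2))^{\al/2}$), hence they vanish. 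This both simplifies the computation and re-confirms that the image lands in the sine series, consistent with Step~1 and the codomain $Y^{k-1}$.

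After this reduction the coefficient of $\sin(kx)$ equals $-k\Omega+(\tfrac{\al}{2}-1)C(\al)\,I_1(k)-k\,C(\al)\,I_2(k)$, where $I_1(k)=\int \frac{\sin(y)\sin(ky)}{(4\sin^2(y/2))^{\al/2}}\,dy$ and $I_2(k)=\int \frac{\cos(y)(1-\cos(ky))}{(4\sin^2(y/2))^{\al/2}}\,dy$. Both integrands vanish to second order at $y=0$, so the integrals converge throughout $1\le\al<2$ (the singularity of the weight is tamed by the factors $\sin(ky)$ and $1-\cos(ky)$). Since the target expression $-a_k\,k(\Omega-\Omega_k)$ already contains the matching term $-k\Omega$, it remains only to verify the identity $(\tfrac{\al}{2}-1)C(\al)\,I_1(k)-k\,C(\al)\,I_2(k)=k\Omega_k$.

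The core computation is the closed-form evaluation of $I_1$ and $I_2$. Using the product-to-sum identities $\sin(y)\sin(ky)=\tfrac12(\cos((k-1)y)-\cos((k+1)y))$ and $\cos(y)\cos(ky)=\tfrac12(\cos((k+1)y)+\cos((k-1)y))$, I would express both in terms of the basic Fourier integrals $J_n=\frac{1}{2\pi}\int \frac{\cos(ny)}{(4\sin^2(y/2))^{\al/2}}\,dy$, obtaining $I_1(k)=\pi(J_{k-1}-J_{k+1})$ and $I_2(k)=2\pi J_1-\pi(J_{k+1}+J_{k-1})$. For $0<\al<1$ one has the classical evaluation $J_n=\frac{\Gamma(1-\al)}{\Gamma(1-\frac{\al}{2}-n)\,\Gamma(1-\frac{\al}{2}+n)}$; because the combinations entering $I_1,I_2$ are precisely the convergent telescoping differences above, this formula extends by analytic continuation in $\al$ to the range $1\le\al<2$ (the individual $J_n$ being read as finite parts). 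Substituting these values and simplifying the resulting quotients of Gamma functions by repeated use of $\Gamma(z+1)=z\Gamma(z)$, together with the explicit value of $C(\al)$, should collapse the whole expression to the stated closed form for $\Omega_k$.

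The main obstacle is twofold: rigorously justifying the closed-form value of $J_n$ for $\al\ge1$, where the bare integral diverges and only the specific cancelling differences are finite, and then carrying out the Gamma-function bookkeeping cleanly enough to match the target. The endpoint $\al=1$ must be handled as a limit: there $\Gamma(1-\al)$ has a simple pole, but the bracket $\frac{\Gamma(1+\frac{\al}{2})}{\Gamma(2-\frac{\al}{2})}-\frac{\Gamma(k+\frac{\al}{2})}{\Gamma(1+k-\frac{\al}{2})}$ vanishes at $\al=1$, so the product stays finite; taking the limit $\al\to1$ (equivalently differentiating in $\al$ to extract the residue) yields the telescoping sum $-\frac{2}{\pi}\sum_{j=2}^{k}\frac{1}{2j-1}$, which is exactly the second branch of the dispersion set.
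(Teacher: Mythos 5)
Your reduction to a single cosine mode is correct: the odd-in-$y$ contributions vanish and the coefficient of $\sin(kx)$ is indeed $-k\Omega+(\tfrac{\al}{2}-1)C(\al)I_1(k)-kC(\al)I_2(k)$, with $I_1,I_2$ as you define them. Your route is also genuinely different from the paper's: the paper never introduces the (individually divergent) coefficients $J_n$, but instead evaluates the convergent difference integrals $IS_k$, $IC_k$ by induction on $k$ (Lemma \ref{LemmaICIS}), while you reduce everything to $J_n$ via product-to-sum identities and continue analytically in $\al$. That continuation step is legitimate: $I_1(k)$ and $I_2(k)$ are analytic in $\al$ on $(0,2)$ because their integrands vanish to second order at $y=0$, and the Gamma expressions for precisely these combinations have removable singularities at $\al=1$.

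The genuine gap is that the closed form you quote for $J_n$ is wrong, and the entire verification is deferred to "substituting these values". From \eqref{integralgammas} with $x=-\al$, $y=2n$ one gets
\begin{align*}
J_n=\frac{(-1)^n\,\Gamma(1-\al)}{\Gamma\left(1-\frac{\al}{2}-n\right)\Gamma\left(1-\frac{\al}{2}+n\right)}
=\frac{\Gamma(1-\al)}{\Gamma\left(\frac{\al}{2}\right)\Gamma\left(1-\frac{\al}{2}\right)}\cdot\frac{\Gamma\left(n+\frac{\al}{2}\right)}{\Gamma\left(1+n-\frac{\al}{2}\right)},
\end{align*}
so your formula is missing the factor $(-1)^n$. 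Equivalently, the true $J_n$ are all positive (every Gamma factor on the right is positive for $0<\al<1$), whereas your expression for $J_1$, namely $\Gamma(1-\al)/[\Gamma(-\tfrac{\al}{2})\Gamma(2-\tfrac{\al}{2})]$, is negative because $\Gamma(-\tfrac{\al}{2})<0$; a direct check gives $J_1=\tfrac{\al}{2}+O(\al^2)>0$ for small $\al$. The error is fatal to the computation as proposed: writing $b_n=\Gamma(n+\tfrac{\al}{2})/\Gamma(1+n-\tfrac{\al}{2})$ and $B=2^{\al-1}\Gamma(1-\al)/(\Gamma(1-\tfrac{\al}{2}))^2$, the correct values combine with the identity $(\tfrac{\al}{2}-1)(b_{k-1}-b_{k+1})+k(b_{k-1}+b_{k+1})=2k\,b_k$ (an exercise with $\Gamma(z+1)=z\Gamma(z)$) to give exactly $kB(b_k-b_1)=k\Omega_k$; your values instead produce $kB\left(b_1+(-1)^{k-1}b_k\right)$, which has the wrong sign for even $k$ and is not $k\Omega_k$ at all for odd $k$. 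So the final collapse you rely on would fail. Once $J_n$ is corrected, your argument does go through, with arguably less bookkeeping than the paper's induction, and your limiting procedure at $\al=1$ (pole of $\Gamma(1-\al)$ against the vanishing bracket) then recovers the second branch of the dispersion set, consistently with the paper.
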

\begin{rem}
 The case $\al \leq 1$ was already covered by \cite{Hassainia-Hmidi:v-states-generalized-sqg}. Our proof goes the same way regardless of the value of $0 < \al < 2$. However, the expression of $\Omega_m$ is also valid in the range $\al > 1$. There is a slight discrepance on the sign, caused by the different choices of $\theta_2 - \theta_1$.
\end{rem}

\begin{proof} In order to calculate the critical rotating velocities, we shall look to each of the contributions to the $k$-th modes. Let $h \in X^{k+\log}_{m}$ or $X^{k}_{m}$, depending on the value of $\alpha$ be
 \begin{align*}
 h(x) = \sum_{j=1}^{\infty} a_j \cos(jx).
 \end{align*}

Then, the contribution of the derivative term $\Omega h'(x)$ to the $k-$th (sine) mode is given by $-k a_{k} \Omega$.

Let's look at the other terms' contribution. The first one will be:

\begin{align*}
&  \left(\frac{\al}{2}-1\right)C(\al) \int \frac{\sin(y)h(x-y)}{\left(4\sin^2\left(\frac{y}{2}\right)\right)^{\al/2}}dy \\
& = 2^{-\al+1}\left(\frac{\al}{2}-1\right)C(\al) \int_{0}^{2\pi} \sum_{k=1}^{\infty}a_{k}(\cos(kx-ky))\cos\left(\frac{y}{2}\right)\left(\sin\left(\frac{y}{2}\right)\right)^{-\al+1} dy \\
\end{align*}

Using that

\begin{align*}
\cos\left(\frac{y}{2}\right) \left(\sin\left(\frac{y}{2}\right)\right)^{1-\al} = \frac{2}{2-\al} \pa_y\left(\sin\left(\frac{y}{2}\right)^{2-\al}\right)
\end{align*}

we can integrate by parts to obtain

\begin{align*}
& 2^{-\al+1}\left(\frac{\al}{2}-1\right)C(\al) \int_{0}^{2\pi} \sum_{k=1}^{\infty}\frac{-2k}{2-\al} a_{k}(\sin(kx-ky))\left(\sin\left(\frac{y}{2}\right)\right)^{2-\al} dy \\
& = 2^{-\al+1}\left(\frac{\al}{2}-1\right)C(\al) \sum_{k=1}^{\infty}\frac{-2k}{2-\al} a_{k}\sin(kx)\int_{0}^{2\pi} \cos(ky)\left(\sin\left(\frac{y}{2}\right)\right)^{2-\al} dy \\
& = 2^{-\al+1}\left(\frac{\al}{2}-1\right)C(\al) \sum_{k=1}^{\infty}\frac{-2k}{2-\al} a_{k}\sin(kx) \frac{2\pi \cos(k \pi) \Gamma(3-\al)}{2^{2-\al}\Gamma\left(2+k-\frac{\al}{2}\right)\Gamma\left(2-k-\frac{\al}{2}\right)}, \\
\end{align*}

where we have used the following identity (see \cite{Magnus-Oberhettinger:formeln-satze-speziellen-funktionen}):

\begin{align}
\label{integralgammas}
\int_{0}^{\pi} (\sin(\eta))^{x}e^{iy\eta}d\eta = \frac{\pi e^{i\frac{\pi y}{2}}\Gamma(x+1)}{2^{x}\Gamma\left(1+\frac{x+y}{2}\right)\Gamma\left(1+\frac{x-y}{2}\right)}, \quad \forall x >-1, \quad \forall y \in \mathbb{R}.
\end{align}

Extracting the $k$-th (sine) mode contribution, we obtain:

\begin{align*}
 2^{-\al+1}\left(\frac{\al}{2}-1\right)C(\al) \frac{-2k}{2-\al} a_{k}\frac{2 \pi \cos(k \pi) \Gamma(3-\al)}{2^{2-\al}\Gamma\left(2+k-\frac{\al}{2}\right)\Gamma\left(2-k-\frac{\al}{2}\right)}.
\end{align*}

In the particular case $\al = 1$, by using the identity

\begin{align*}
\Gamma(z)\Gamma(1-z) = \frac{\pi}{\sin(\pi z)},
\end{align*}

the contribution amounts to:

\begin{align*}
 a_k \frac{k}{2} \frac{1}{2\pi} \frac{2 \pi (-1)^{k}}{\Gamma\left(\frac32+k\right)\Gamma\left(\frac32-k\right)}
=  -\frac{a_k}{\pi} \frac{2k}{4k^2 - 1}.
\end{align*}

We move now to the second term. We have

\begin{align}
\label{Omegasecondterm}
& C(\al) \int \frac{\cos(y)(h'(x)-h'(x-y))}{\left(4\sin^2\left(\frac{y}{2}\right)\right)^{\al/2}}dy \nonumber \\
& = 2^{-\al} C(\al) \int \sum_{k=1}^{\infty} (-k a_k )\frac{\cos(y)(\sin(kx) - \sin(kx-ky))}{\left(\sin^2\left(\frac{y}{2}\right)\right)^{\al/2}}dy \nonumber \\
& = 2^{-\al} C(\al) \int \sum_{k=1}^{\infty} (-k a_k )(\sin(kx) - \sin(kx-ky))\left(\sin\left(\frac{y}{2}\right)\right)^{-\al}dy \nonumber \\
& + 2^{-\al+1} C(\al) \int \sum_{k=1}^{\infty} k a_k(\sin(kx) - \sin(kx-ky))\left(\sin\left(\frac{y}{2}\right)\right)^{2-\al}dy \nonumber \\
\end{align}

In order to compute the last two integrals we will use the following lemma:

\begin{lemma}
\label{LemmaICIS}
Let $\al \in (0,2), k \in \mathbb{N}$ and let $IS_{k}(x)$ and $IC_{k}(x)$ be defined as

\begin{align*}
IS_k(x) = \int_{0}^{2\pi} \frac{\sin(kx)-\sin(kx-ky)}{\sin\left(\frac{y}{2}\right)^{\al}} dy, \quad
IC_k(x) = \int_{0}^{2\pi} \frac{\cos(kx)-\cos(kx-ky)}{\sin\left(\frac{y}{2}\right)^{\al}} dy \\
\end{align*}
Then, if $\al \neq 1$:

\begin{align*}
IS_{k}  = \sin(kx)2^{\al} \frac{2\pi \Gamma\left(1-\al\right)}{\Gamma\left(\frac{\al}{2}\right)\Gamma\left(1-\frac{\al}{2}\right)}\left(\frac{\Gamma\left(\frac{\al}{2}\right)}{\Gamma\left(1-\frac{\al}{2}\right)} - \frac{\Gamma\left(k+\frac{\al}{2}\right)}{\Gamma\left(1+k-\frac{\al}{2}\right)}\right), \\
IC_{k}  = \cos(kx)2^{\al} \frac{2\pi \Gamma\left(1-\al\right)}{\Gamma\left(\frac{\al}{2}\right)\Gamma\left(1-\frac{\al}{2}\right)}\left(\frac{\Gamma\left(\frac{\al}{2}\right)}{\Gamma\left(1-\frac{\al}{2}\right)} - \frac{\Gamma\left(k+\frac{\al}{2}\right)}{\Gamma\left(1+k-\frac{\al}{2}\right)}\right),
\end{align*}
and if $\al = 1$:

\begin{align*}
 IS_{k}  = \sin(kx) \sum_{m=1}^{k} \frac{8}{2m-1}, \quad
IC_{k}  = \cos(kx) \sum_{m=1}^{k} \frac{8}{2m-1}.
\end{align*}

\end{lemma}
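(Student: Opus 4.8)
The plan is to reduce both $IS_k$ and $IC_k$ to a single scalar integral and then evaluate that integral with the Beta-type formula \eqref{integralgammas}. First I would expand the numerators by the angle-addition formulas, writing
$$\sin(kx)-\sin(kx-ky) = \sin(kx)(1-\cos(ky)) + \cos(kx)\sin(ky),$$
and symmetrically $\cos(kx)-\cos(kx-ky) = \cos(kx)(1-\cos(ky)) - \sin(kx)\sin(ky)$. The cross terms carry a factor $\sin(ky)$, and under the change of variables $y\mapsto 2\pi-y$ one has $\sin(k(2\pi-y)) = -\sin(ky)$ while $\sin(\frac{2\pi-y}{2}) = \sin(\frac{y}{2})$; hence $\int_0^{2\pi}\sin(ky)\sin(\frac{y}{2})^{-\al}\,dy = 0$ and the cross terms drop out. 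This leaves
$$IS_k(x) = \sin(kx)\, J_k, \qquad IC_k(x) = \cos(kx)\, J_k, \qquad J_k := \int_0^{2\pi}\frac{1-\cos(ky)}{\sin(\frac{y}{2})^{\al}}\,dy,$$
which already explains why the two integrals share the same closed form. Near $y=0$ the integrand of $J_k$ behaves like $y^{2-\al}$, so $J_k$ converges for all $\al\in(0,2)$.

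Next I would compute $J_k$. Substituting $\eta=y/2$ gives $J_k = 2\int_0^\pi (1-\cos(2k\eta))(\sin\eta)^{-\al}\,d\eta$. For $0<\al<1$ both pieces converge separately, and I would apply \eqref{integralgammas}: with parameters $x=-\al$ and $y=0$ it evaluates $\int_0^\pi(\sin\eta)^{-\al}\,d\eta$, while with $x=-\al$ and $y=2k$ it evaluates $\int_0^\pi(\sin\eta)^{-\al}\cos(2k\eta)\,d\eta = \operatorname{Re}\int_0^\pi(\sin\eta)^{-\al}e^{2ik\eta}\,d\eta$, the exponential producing a factor $(-1)^k$. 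The resulting expression contains $\Gamma(1-k-\frac{\al}{2})$, which I would convert using the reflection formula $\Gamma(z)\Gamma(1-z)=\pi/\sin(\pi z)$: since $\sin(\pi(k+\frac{\al}{2})) = (-1)^k\sin(\pi\frac{\al}{2})$, one gets $\Gamma(k+\frac{\al}{2})\Gamma(1-k-\frac{\al}{2}) = (-1)^k\Gamma(\frac{\al}{2})\Gamma(1-\frac{\al}{2})$. Substituting this identity turns the two-term expression precisely into the claimed form. To pass from $0<\al<1$ to $1<\al<2$ I would invoke analytic continuation: $J_k(\al)$ is real-analytic on $(0,3)$ by differentiation under the integral sign, the right-hand side is real-analytic on $(0,2)\setminus\{1\}$, and the two agree on $(0,1)$, hence on all of $(0,2)\setminus\{1\}$.

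Finally, for $\al=1$ the factor $\Gamma(1-\al)$ blows up while the bracketed difference vanishes, so the general formula only has a removable singularity there; rather than take the limit I would compute $J_k$ directly by telescoping. Writing $1-\cos(2k\eta)=\sum_{m=1}^k(\cos(2(m-1)\eta)-\cos(2m\eta))$ and using $\cos(2(m-1)\eta)-\cos(2m\eta) = 2\sin\eta\,\sin((2m-1)\eta)$, the singular weight cancels and
$$\int_0^\pi\frac{\cos(2(m-1)\eta)-\cos(2m\eta)}{\sin\eta}\,d\eta = 2\int_0^\pi\sin((2m-1)\eta)\,d\eta = \frac{4}{2m-1}.$$
Summing over $m$ and multiplying by $2$ yields $J_k=\sum_{m=1}^k\frac{8}{2m-1}$, the stated value. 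The main obstacle is the range $\al\geq 1$: the naive splitting of $J_k$ into two separate integrals diverges there, so one must either argue by analytic continuation as above or, equivalently, carry the telescoping decomposition (which lowers the singular exponent by one and keeps every integral convergent for $\al<2$) through \eqref{integralgammas} with $x=1-\al$; in either route the logarithmic case $\al=1$ must be singled out and summed explicitly.
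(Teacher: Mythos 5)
Your proof is correct, and it takes a genuinely different route from the paper's. The paper argues by induction on $k$: using the angle-addition formulas inside the integrals it derives a recurrence relating $IS_k,IC_k$ to $IS_{k-1},IC_{k-1}$, whose increment integrals involve only the exponents $1-\al$ and $2-\al$ (hence are convergent for every $\al\in(0,2)$), evaluates those increments by \eqref{integralgammas}, and then verifies the closed form through a fairly heavy chain of Gamma-function identities, treating $\al=1$ separately via the reflection formula. You instead kill the cross terms by the symmetry $y\mapsto 2\pi-y$, reducing both integrals to the single scalar $J_k=\int_0^{2\pi}(1-\cos(ky))\sin\left(\frac{y}{2}\right)^{-\al}dy$; this explains in one line why $IS_k$ and $IC_k$ carry the same coefficient, a fact the paper obtains only by running two recurrences in parallel. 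The cost is that your direct application of \eqref{integralgammas} needs the exponent $-\al>-1$, i.e.\ $\al<1$, and you cover $1<\al<2$ by analytic continuation in $\al$ --- a step the paper never needs, since its recurrence only ever meets exponents larger than $-1$. Your continuation argument is sound: you correctly note the removable singularity of the right-hand side at $\al=1$, so both sides are analytic on the connected interval $(0,2)$ and the identity theorem applies; for full rigor, the real-analyticity of $J_k$ is most easily justified as holomorphy in the strip $0<\mathrm{Re}\,\al<3$ via Morera's theorem with local uniform domination, or from the factorial bound $\int_0^1|\log y|^n y^{2-\al}dy=n!/(3-\al)^{n+1}$. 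Your fallback route --- telescoping $1-\cos(2k\eta)=\sum_{m=1}^k 2\sin\eta\sin((2m-1)\eta)$ so that every integral has exponent $1-\al>-1$ --- is in substance the paper's induction recast as a closed sum, and it does go through: after the reflection formula the resulting sum telescopes, since $\Gamma\left(m-1+\frac{\al}{2}\right)/\Gamma\left(m+1-\frac{\al}{2}\right)=\frac{1}{\al-1}\bigl[\Gamma\left(m+\frac{\al}{2}\right)/\Gamma\left(m+1-\frac{\al}{2}\right)-\Gamma\left(m-1+\frac{\al}{2}\right)/\Gamma\left(m-\frac{\al}{2}\right)\bigr]$, so either variant completes the argument, and your $\al=1$ computation is exactly right. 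In short: your approach is shorter and structurally cleaner, unifying the two integrals and avoiding the inductive Gamma algebra; the paper's approach stays uniformly within real, convergent integrals for all $\al\in(0,2)$ at the price of more computation, never invoking continuation in the parameter.
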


\begin{proof}
The proof is done by induction. We will find a recurrence for $IS_{k}, IC_{k}$ in terms of $IS_{k-1}, IC_{k-1}$ and then apply the induction hypothesis.

We start with $IS_{k}(x)$. Using the addition formulas for the sine and cosine:

\begin{align*}
IS_{k}(x) & = \cos(x) IS_{k-1}(x) + \sin(x)IC_{k-1}(x) \\
& + \int_{0}^{2\pi} \frac{\sin((k-1)x-(k-1)y)(\cos(x) - \cos(x-y))}{\sin\left(\frac{y}{2}\right)^{\al}} dy \\
& + \int_{0}^{2\pi} \frac{\cos((k-1)x-(k-1)y)(\sin(x) - \sin(x-y))}{\sin\left(\frac{y}{2}\right)^{\al}} dy \\
& = \cos(x) IS_{k-1}(x) + \sin(x)IC_{k-1}(x) + J_{1}(x) + J_{2}(x)
\end{align*}
\begin{align*}
J_{1}(x) & = 2\cos(x)\int_{0}^{2\pi} \sin((k-1)x-(k-1)y)\sin\left(\frac{y}{2}\right)^{2-\al} dy \\
& -2\sin(x)\int_{0}^{2\pi} \sin((k-1)x-(k-1)y)\cos\left(\frac{y}{2}\right)\sin\left(\frac{y}{2}\right)^{1-\al} dy\\
 & = 2\cos(x)\int_{0}^{2\pi} \sin((k-1)x)\cos((k-1)y)\sin\left(\frac{y}{2}\right)^{2-\al} dy \\
& +2\sin(x)\int_{0}^{2\pi} \cos((k-1)x)\sin((k-1)y)\cos\left(\frac{y}{2}\right)\sin\left(\frac{y}{2}\right)^{1-\al} dy \\
& = J_{1,1}(x) + J_{1,2}(x)
\end{align*}
Writing the cosine as a sum of exponentials and applying formula \eqref{integralgammas}, we get
\begin{align*}
J_{1,1}(x) & = 4 (-1)^{k-1} \frac{\pi}{2^{2-\al}} \frac{\Gamma(3-\al)}{\Gamma\left(1+k-\frac{\al}{2}\right)\Gamma\left(3-k-\frac{\al}{2}\right)}\sin((k-1)x)\cos(x)
\end{align*}

Concerning $J_{1,2}$, we first integrate by parts:
\begin{align*}
J_{1,2} & = -\frac{4(k-1)}{2-\al}\int_{0}^{2\pi} \sin(x)\cos((k-1)x)\cos((k-1)y)\sin\left(\frac{y}{2}\right)^{2-\al} dy \\
& = -\frac{8(k-1)}{2-\al}\frac{(-1)^{k-1}\pi}{2^{2-\al}} \frac{\Gamma(3-\al)}{\Gamma\left(1+k-\frac{\al}{2}\right)\Gamma\left(3-k-\frac{\al}{2}\right)}\cos((k-1)x)\sin(x)
\end{align*}

We move on to $J_2$:

\begin{align*}
J_2(x) & = 2\sin(x)\int_{0}^{2\pi} \cos((k-1)x-(k-1)y)\sin\left(\frac{y}{2}\right)^{2-\al} dy\\
& + 2\cos(x)\int_{0}^{2\pi} \cos((k-1)x-(k-1)y)\cos\left(\frac{y}{2}\right)^{1-\al}\sin\left(\frac{y}{2}\right)^{1-\al} dy \\
& = 2\sin(x)\cos((k-1)x)\int_{0}^{2\pi} \cos((k-1)y)\sin\left(\frac{y}{2}\right)^{2-\al} dy\\
& + 2\cos(x)\sin((k-1)x)\int_{0}^{2\pi} \sin((k-1)y)\cos\left(\frac{y}{2}\right)^{1-\al}\sin\left(\frac{y}{2}\right)^{1-\al} dy \\
& = 4 (-1)^{k-1} \frac{\pi}{2^{2-\al}} \frac{\Gamma(3-\al)}{\Gamma\left(1+k-\frac{\al}{2}\right)\Gamma\left(3-k-\frac{\al}{2}\right)}\cos((k-1)x)\sin(x)\\
& -\frac{8(k-1)}{2-\al}\frac{(-1)^{k-1}\pi}{2^{2-\al}} \frac{\Gamma(3-\al)}{\Gamma\left(1+k-\frac{\al}{2}\right)\Gamma\left(3-k-\frac{\al}{2}\right)}\sin((k-1)x)\cos(x)
\end{align*}

Adding up $J_1(x) + J_2(x)$:

\begin{align*}
J_1(x) + J_2(x) & = \sin(kx)\frac{(-1)^{k-1}\pi}{2^{2-\al}} \frac{\Gamma(3-\al)}{\Gamma\left(1+k-\frac{\al}{2}\right)\Gamma\left(3-k-\frac{\al}{2}\right)}\left(4  -\frac{8(k-1)}{2-\al} \right).
\end{align*}

We calculate the recurrence relation for $IC_k(x)$. Using the same expansion as before, we obtain

\begin{align*}
IC_{k}(x) & = \cos(x) IC_{k-1}(x) - \sin(x)IS_{k-1}(x) \\
& + \int_{0}^{2\pi} \frac{\cos((k-1)x-(k-1)y)(\cos(x) - \cos(x-y))}{\sin\left(\frac{y}{2}\right)^{\al}} dy \\
& - \int_{0}^{2\pi} \frac{\sin((k-1)x-(k-1)y)(\sin(x) - \sin(x-y))}{\sin\left(\frac{y}{2}\right)^{\al}} dy \\ & = \cos(x) IC_{k-1}(x) - \sin(x)IS_{k-1}(x) + K_1(x) + K_2(x)
\end{align*}

We continue with $K_1(x)$:

\begin{align*}
K_1(x) & = 2\cos(x)\int_{0}^{2\pi} \cos((k-1)x-(k-1)y)\sin\left(\frac{y}{2}\right)^{2-\al} dy\\
& - 2\sin(x)\int_{0}^{2\pi} \cos((k-1)x-(k-1)y)\cos\left(\frac{y}{2}\right)^{1-\al}\sin\left(\frac{y}{2}\right)^{1-\al} dy \\
& = 2\cos(x)\cos((k-1)x)\int_{0}^{2\pi} \cos((k-1)y)\sin\left(\frac{y}{2}\right)^{2-\al} dy\\
& - 2\sin(x)\sin((k-1)x)\int_{0}^{2\pi} \sin((k-1)y)\cos\left(\frac{y}{2}\right)^{1-\al}\sin\left(\frac{y}{2}\right)^{1-\al} dy \\
& = 4 (-1)^{k-1} \frac{\pi}{2^{2-\al}} \frac{\Gamma(3-\al)}{\Gamma\left(1+k-\frac{\al}{2}\right)\Gamma\left(3-k-\frac{\al}{2}\right)}\cos((k-1)x)\cos(x)\\
& +\frac{8(k-1)}{2-\al}\frac{(-1)^{k-1}\pi}{2^{2-\al}} \frac{\Gamma(3-\al)}{\Gamma\left(1+k-\frac{\al}{2}\right)\Gamma\left(3-k-\frac{\al}{2}\right)}\sin((k-1)x)\sin(x)
\end{align*}

In a similar way, $K_2(x)$ is equal to:

\begin{align*}
K_2(x) & = -2\sin(x)\int_{0}^{2\pi} \sin((k-1)x-(k-1)y)\sin\left(\frac{y}{2}\right)^{2-\al} dy\\
& - 2\cos(x)\int_{0}^{2\pi} \sin((k-1)x-(k-1)y)\cos\left(\frac{y}{2}\right)^{1-\al}\sin\left(\frac{y}{2}\right)^{1-\al} dy \\
& = -2\sin(x)\sin((k-1)x)\int_{0}^{2\pi} \cos((k-1)y)\sin\left(\frac{y}{2}\right)^{2-\al} dy\\
& + 2\cos(x)\cos((k-1)x)\int_{0}^{2\pi} \sin((k-1)y)\cos\left(\frac{y}{2}\right)^{1-\al}\sin\left(\frac{y}{2}\right)^{1-\al} dy \\
& = -4 (-1)^{k-1} \frac{\pi}{2^{2-\al}} \frac{\Gamma(3-\al)}{\Gamma\left(1+k-\frac{\al}{2}\right)\Gamma\left(3-k-\frac{\al}{2}\right)}\sin((k-1)x)\sin(x)\\
& -\frac{8(k-1)}{2-\al}\frac{(-1)^{k-1}\pi}{2^{2-\al}} \frac{\Gamma(3-\al)}{\Gamma\left(1+k-\frac{\al}{2}\right)\Gamma\left(3-k-\frac{\al}{2}\right)}\cos((k-1)x)\cos(x)
\end{align*}

Adding up $K_1(x)$ and $K_2(x)$:

\begin{align*}
K_1(x) + K_2(x)  & =
4 (-1)^{k-1} \frac{\pi}{2^{2-\al}} \frac{\Gamma(3-\al)}{\Gamma\left(1+k-\frac{\al}{2}\right)\Gamma\left(3-k-\frac{\al}{2}\right)}\cos(kx)\\
& -\frac{8(k-1)}{2-\al}\frac{(-1)^{k-1}\pi}{2^{2-\al}} \frac{\Gamma(3-\al)}{\Gamma\left(1+k-\frac{\al}{2}\right)\Gamma\left(3-k-\frac{\al}{2}\right)}\cos(kx) \\
& = \cos(kx)\frac{(-1)^{k-1}\pi}{2^{2-\al}} \frac{\Gamma(3-\al)}{\Gamma\left(1+k-\frac{\al}{2}\right)\Gamma\left(3-k-\frac{\al}{2}\right)}\left(4 -\frac{8(k-1)}{2-\al} \right).
\end{align*}

We distinguish two cases. In the case $\al = 1$, we can simplify our formulas by using that

\begin{align*}
 \left.\frac{(-1)^{k-1}\pi}{2^{2-\al}} \frac{\Gamma(3-\al)}{\Gamma\left(1+k-\frac{\al}{2}\right)\Gamma\left(3-k-\frac{\al}{2}\right)}\left(4 -\frac{8(k-1)}{2-\al} \right)\right|_{\al=1}
= \left.(-1)^{k-1}\pi 2^{1+\al} \frac{\Gamma(2-\al)}{\Gamma\left(1+k-\frac{\al}{2}\right)\Gamma\left(2-k-\frac{\al}{2}\right)}\right|_{\al=1} \\
= (-1)^{k-1}\frac{4\pi}{\Gamma\left(\frac{1}{2}+k\right)\Gamma\left(\frac{3}{2}-k\right)}
= (-1)^{k-1}\frac{4\pi}{\Gamma\left(\frac{1}{2}+k\right)\Gamma\left(\frac{1}{2}-k\right)\left(\frac{1}{2}-k\right)}
= \frac{8}{2k-1},
\end{align*}

where in the last equality we have used the identity
\begin{align*}
 \Gamma(1-z)\Gamma(z) = \frac{\pi}{\sin(\pi z)}.
\end{align*}

Adding in $k$, we obtain the desired formulas for $IS, IC$:

\begin{align*}
 IS_{k}  = \sin(kx) \sum_{m=1}^{k} \frac{8}{2m-1}, \quad
IC_{k}  = \cos(kx) \sum_{m=1}^{k} \frac{8}{2m-1}.
\end{align*}

For the other values of $\alpha$, we use induction. We start by checking the base case ($k = 1$):

\begin{align*}
\frac{2^{\al} \pi \Gamma(3-\al)}{\left(\Gamma\left(2-\frac{\al}{2}\right)\right)^2}
& = \frac{2^{\al} 2\pi \Gamma(1-\al)}{\Gamma\left(1-\frac{\al}{2}\right)}\left(\frac{1-\al}{\left(1-\frac{\al}{2}\right)\Gamma\left(1-\frac{\al}{2}\right)}\right)
 = \frac{2^{\al} 2\pi \Gamma(1-\al)}{\Gamma\left(1-\frac{\al}{2}\right)}\left(\frac{1}{\Gamma\left(1-\frac{\al}{2}\right)}\left(\frac{\Gamma\left(\frac{\al}{2}\right)}{\Gamma\left(\frac{\al}{2}\right)}-\frac{\Gamma\left(1+\frac{\al}{2}\right)}{\left(1-\frac{\al}{2}\right)\Gamma\left(\frac{\al}{2}\right)}\right)\right) \\
 &  = \frac{2^{\al} 2\pi \Gamma(1-\al)}{\Gamma\left(1-\frac{\al}{2}\right)\Gamma\left(\frac{\al}{2}\right)}\left(\frac{\Gamma\left(\frac{\al}{2}\right)}{\Gamma\left(1-\frac{\al}{2}\right)}-\frac{\Gamma\left(1+\frac{\al}{2}\right)}{\Gamma\left(2-\frac{\al}{2}\right)}\right)
\end{align*}

Finally, we do the induction step. We assume that the formula is true for $k-1$ ($k\geq 2$) and we show it for $k$. It is enough to check that:

\begin{align*}
& & \frac{2^{\al} 2\pi \Gamma(1-\al)}{\Gamma\left(1-\frac{\al}{2}\right)\Gamma\left(\frac{\al}{2}\right)}\left(\frac{\Gamma\left(\frac{\al}{2}\right)}{\Gamma\left(1-\frac{\al}{2}\right)}-\frac{\Gamma\left(k-1+\frac{\al}{2}\right)}{\Gamma\left(k-\frac{\al}{2}\right)}\right) & \\
& &  + \frac{2^{\al} \pi \Gamma(3-\al)(-1)^{k-1} \left(1 - \frac{2(k-1)}{2-\al}\right)}{\Gamma\left(k+1-\frac{\al}{2}\right)\Gamma\left(3-k-\frac{\al}{2}\right)}&  = \frac{2^{\al} 2\pi \Gamma(1-\al)}{\Gamma\left(1-\frac{\al}{2}\right)\Gamma\left(\frac{\al}{2}\right)}\left(\frac{\Gamma\left(\frac{\al}{2}\right)}{\Gamma\left(1-\frac{\al}{2}\right)}-\frac{\Gamma\left(k+\frac{\al}{2}\right)}{\Gamma\left(k+1-\frac{\al}{2}\right)}\right) \\
\Leftrightarrow & & \frac{2^{\al} 2\pi \Gamma(1-\al)}{\Gamma\left(1-\frac{\al}{2}\right)\Gamma\left(\frac{\al}{2}\right)}\left(\frac{\Gamma\left(k+\frac{\al}{2}\right)}{\Gamma\left(k+1-\frac{\al}{2}\right)}-\frac{\Gamma\left(k-1+\frac{\al}{2}\right)}{\Gamma\left(k-\frac{\al}{2}\right)}\right)  & = \frac{2^{\al} \pi (2-\al)(1-\al)\Gamma(1-\al)(-1)^{k} \left(1 - \frac{2(k-1)}{2-\al}\right)}{\Gamma\left(k+1-\frac{\al}{2}\right)\Gamma\left(3-k-\frac{\al}{2}\right)} \\
\Leftrightarrow & & \frac{ 2}{\Gamma\left(1-\frac{\al}{2}\right)\Gamma\left(\frac{\al}{2}\right)}\frac{\Gamma\left(k+\frac{\al}{2}\right)}{\Gamma\left(k+1-\frac{\al}{2}\right)}\left(1-\frac{k-\frac{\al}{2}}{k-1+\frac{\al}{2}}\right)  & = \frac{(2-\al)(1-\al)(-1)^{k} \left(1 - \frac{2(k-1)}{2-\al}\right)}{\Gamma\left(k+1-\frac{\al}{2}\right)\Gamma\left(3-k-\frac{\al}{2}\right)} \\
\Leftrightarrow & & \frac{2\Gamma\left(k-1+\frac{\al}{2}\right)}{\Gamma\left(1-\frac{\al}{2}\right)\Gamma\left(\frac{\al}{2}\right)}  & = \frac{(-1)^{k+1}(4-\al-2k)}{\Gamma\left(3-k-\frac{\al}{2}\right)} \\
\Leftrightarrow & & \frac{\Gamma\left(k-1+\frac{\al}{2}\right)}{\Gamma\left(\frac{\al}{2}\right)}  & = (-1)^{k+1}\left(2-k-\frac{\al}{2}\right)\frac{\Gamma\left(1-\frac{\al}{2}\right)}{\Gamma\left(3-k-\frac{\al}{2}\right)} \\
\Leftrightarrow & & \left(k-2+\frac{\al}{2}\right)\left(k-3+\frac{\al}{2}\right) \cdots \left(\frac{\al}{2}\right)  & = (-1)^{k+1}\left(2-k-\frac{\al}{2}\right)\left(-\frac{\al}{2}\right)\left(-\frac{\al}{2}-1\right) \cdots \left(-\frac{\al}{2}-(k-3)\right),
\end{align*}

which is true. This finishes the proof.

\end{proof}

We insert the previous result in \eqref{Omegasecondterm} and extract the $k$-th mode contribution. In the case $\al = 1$:

\begin{align*}
& a_k \frac{1}{4\pi} (-k) \sum_{m=1}^{k}\frac{8}{2m-1}
+ k a_k \frac{1}{2\pi} \frac{1}{2} \frac{2\pi \Gamma(2)}{\Gamma\left(-\frac12\right)\Gamma\left(\frac32\right)}\left(\frac{\Gamma\left(-\frac12\right)}{\Gamma\left(\frac32\right)} - \frac{\Gamma\left(k-\frac12\right)}{\Gamma\left(k+\frac32\right)}\right) \\
& = -k a_k \frac{2}{\pi}\sum_{m=1}^{k}\frac{1}{2m-1}
+ k a_k  \frac{1}{2\pi}\left(4 + \frac{1}{k^2 - \frac{1}{4}}\right). \\
\end{align*}

Combining the sum of every contribution, they amount to

\begin{align*}
-k a_k  \Omega
-\frac{a_k}{\pi} \frac{2k}{4k^2 - 1}
-k a_k \frac{2}{\pi}\sum_{m=1}^{k}\frac{1}{2m-1}
+ k a_k  \frac{1}{2\pi}\left(4 + \frac{1}{k^2 - \frac{1}{4}}\right)
= -ka_k [\Omega - \Omega_k]. \\
\end{align*}

This proves the case $\al = 1$. In the case $\al \neq 1$, the coefficient in front of the $k$-th mode is:

\begin{align*}
& -k a_k  \Omega
+ 2^{-\al+1}\left(\frac{\al}{2}-1\right)C(\al) \frac{-2k}{2-\al} a_{k}\frac{2 \pi \cos(k \pi) \Gamma(3-\al)}{2^{2-\al}\Gamma\left(2+k-\frac{\al}{2}\right)\Gamma\left(2-k-\frac{\al}{2}\right)} \\
 & a_k 2^{-\al} C(\al) (-k) 2^{\al} \frac{2\pi \Gamma(1-\al)}{\Gamma\left(\frac{\al}{2}\right)\Gamma\left(1-\frac{\al}{2}\right)}\left(\frac{\Gamma\left(\frac{\al}{2}\right)}{\Gamma\left(1-\frac{\al}{2}\right)} - \frac{\Gamma\left(k+\frac{\al}{2}\right)}{\Gamma\left(1+k-\frac{\al}{2}\right)}\right)\\
& + a_k 2^{-\al+1} C(\al) k 2^{\al-2} \frac{2\pi \Gamma(3-\al)}{\Gamma\left(\frac{\al}{2}-1\right)\Gamma\left(2-\frac{\al}{2}\right)}\left(\frac{\Gamma\left(\frac{\al}{2}-1\right)}{\Gamma\left(2-\frac{\al}{2}\right)} - \frac{\Gamma\left(k-1+\frac{\al}{2}\right)}{\Gamma\left(2+k-\frac{\al}{2}\right)}\right).
\end{align*}

We can group the third and the fifth factor into

\begin{align*}
 & a_k 2^{-\al} C(\al) (-k) 2^{\al} \frac{2\pi \Gamma(1-\al)}{\Gamma\left(\frac{\al}{2}\right)\Gamma\left(1-\frac{\al}{2}\right)}\left(\frac{\Gamma\left(\frac{\al}{2}\right)}{\Gamma\left(1-\frac{\al}{2}\right)}\right)
 + a_k 2^{-\al+1} C(\al) k 2^{\al-2} \frac{2\pi \Gamma(3-\al)}{\Gamma\left(\frac{\al}{2}-1\right)\Gamma\left(2-\frac{\al}{2}\right)}\left(\frac{\Gamma\left(\frac{\al}{2}-1\right)}{\Gamma\left(2-\frac{\al}{2}\right)}\right) \\
= & k a_k 2^{\al - 1} \frac{\Gamma(1-\al)}{\left(\Gamma\left(1-\frac{\al}{2}\right)\right)^{2}} \left(-\frac{\Gamma\left(\frac{\al}{2}\right)}{\Gamma\left(1-\frac{\al}{2}\right)} + \frac12 \frac{(2-\al)(1-\al) \Gamma\left(\frac{\al}{2}\right)}{\left(1-\frac{\al}{2}\right)\Gamma\left(2-\frac{\al}{2}\right)}\right) \\
= & k a_k 2^{\al - 1} \frac{\Gamma(1-\al)}{\left(\Gamma\left(1-\frac{\al}{2}\right)\right)^{2}} \frac{\Gamma\left(1+\frac{\al}{2}\right)}{\Gamma\left(2-\frac{\al}{2}\right)} \left(-\frac{1-\frac{\al}{2}}{\frac{\al}{2}} + \frac{1}{2}\frac{(2-\al)(1-\al)}{\left(1-\frac{\al}{2}\right)\left(\frac{\al}{2}\right)}\right) \\
= & -k a_k 2^{\al - 1} \frac{\Gamma(1-\al)}{\left(\Gamma\left(1-\frac{\al}{2}\right)\right)^{2}} \frac{\Gamma\left(1+\frac{\al}{2}\right)}{\Gamma\left(2-\frac{\al}{2}\right)}, \\
\end{align*}

and the second, fourth and sixth as

\begin{align*}
& k a_k 2^{\al-1} \frac{\Gamma(1-\al)}{\left(\Gamma\left(1-\frac{\al}{2}\right)\right)^{2}} \frac{\Gamma\left(k+\frac{\al}{2}\right)}{\Gamma\left(1+k-\frac{\al}{2}\right)} \\
& \times \left(1 - \frac12 \frac{(2-\al)(1-\al)}{\left(1-\frac{\al}{2}\right)\left(k-1+\frac{\al}{2}\right)\left(1+k-\frac{\al}{2}\right)} \frac{\Gamma\left(\frac{\al}{2}\right)}{\Gamma\left(\frac{\al}{2}-1\right)}+ \frac12 \frac{(2-\al)(1-\al)\Gamma\left(1-\frac{\al}{2}\right)\Gamma\left(\frac{\al}{2}\right)(-1)^{k}}{\left(1+k-\frac{\al}{2}\right)\Gamma\left(k+\frac{\al}{2}\right)\Gamma\left(2-k-\frac{\al}{2}\right)}\right) \\
& =  k a_k 2^{\al-1} \frac{\Gamma(1-\al)}{\left(\Gamma\left(1-\frac{\al}{2}\right)\right)^{2}} \frac{\Gamma\left(k+\frac{\al}{2}\right)}{\Gamma\left(1+k-\frac{\al}{2}\right)}
\left(1 - \frac12 \frac{(1-\al)(\al-2)}{\left(k-1+\frac{\al}{2}\right)\left(k+1-\frac{\al}{2}\right)} + \frac12 \frac{(1-\al)(\al-2)}{\left(k-1+\frac{\al}{2}\right)\left(k+1-\frac{\al}{2}\right)}\right) \\
& =  k a_k 2^{\al-1} \frac{\Gamma(1-\al)}{\left(\Gamma\left(1-\frac{\al}{2}\right)\right)^{2}} \frac{\Gamma\left(k+\frac{\al}{2}\right)}{\Gamma\left(1+k-\frac{\al}{2}\right)}
\end{align*}

In total, we get that the $k$-th coefficient is precisely

\begin{align*}
 -k a_k(\Omega - \Omega_k),
\end{align*}

as claimed.
\end{proof}

\begin{prop}
\label{Omegammonotonic}
Let $\al \in (0,2)$. The values of $\Omega_k$ are monotonic with $k$.
%
\end{prop}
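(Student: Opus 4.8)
The plan is to separate the two branches in the definition of $\Omega_k$ and, in each, reduce the monotonicity to an elementary sign computation.

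First I would dispose of the case $\al = 1$. Here $\Omega_k = -\frac{2}{\pi}\sum_{j=2}^{k}\frac{1}{2j-1}$ is a partial sum of a series whose terms all have the same sign, so that
$$\Omega_{k}-\Omega_{k-1} = -\frac{2}{\pi}\,\frac{1}{2k-1} < 0,$$
and the sequence is strictly decreasing, hence monotone.

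For $\al \neq 1$ the idea is to exhibit $\Omega_k$ as an affine function, with a fixed nonzero slope, of a single $k$-dependent quantity
$$M_k \equiv \frac{\Gamma\!\left(k+\frac{\al}{2}\right)}{\Gamma\!\left(1+k-\frac{\al}{2}\right)}.$$
Since $\frac{\Gamma(1+\frac{\al}{2})}{\Gamma(2-\frac{\al}{2})} = M_1$, the formula for $\Omega_k$ reads $\Omega_k = A\,(M_1 - M_k)$ with $A = -2^{\al-1}\frac{\Gamma(1-\al)}{\left(\Gamma(1-\frac{\al}{2})\right)^{2}}$. For $\al \in (0,2)\setminus\{1\}$ this constant $A$ is finite and nonzero, so it suffices to show that $(M_k)$ is strictly monotone. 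To that end I would compute the consecutive ratio using $\Gamma(z+1)=z\Gamma(z)$:
$$\frac{M_{k+1}}{M_k} = \frac{\Gamma\!\left(k+1+\frac{\al}{2}\right)}{\Gamma\!\left(k+\frac{\al}{2}\right)}\,\frac{\Gamma\!\left(1+k-\frac{\al}{2}\right)}{\Gamma\!\left(2+k-\frac{\al}{2}\right)} = \frac{k+\frac{\al}{2}}{k+1-\frac{\al}{2}}.$$
For $k\geq 1$ and $\al\in(0,2)$ both arguments of the Gamma functions defining $M_k$ are positive, so $M_k>0$ and the displayed ratio is a quotient of positive numbers whose numerator minus denominator equals $\al-1$. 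Hence the ratio exceeds $1$ precisely when $\al>1$ and is less than $1$ precisely when $\al<1$; in either case $(M_k)$ is strictly monotone, and therefore so is $\Omega_k = A(M_1-M_k)$.

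I do not expect a substantial obstacle here. The only points requiring care are verifying that no pole of $\Gamma$ is encountered on the relevant arguments, so that the slope $A$ is genuinely nonzero, and recognizing that $\al=1$ must be handled through the separate (logarithmic) expression for $\Omega_k$, since the Gamma-function formula degenerates at that value.
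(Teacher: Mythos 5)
Your proposal is correct and follows essentially the same route as the paper: for $\al \neq 1$ the paper likewise reduces the claim to the monotonicity of $M_k = \Gamma\left(k+\frac{\al}{2}\right)/\Gamma\left(1+k-\frac{\al}{2}\right)$ and computes the consecutive ratio $\frac{k+\frac{\al}{2}}{1+k-\frac{\al}{2}} = 1 + \frac{\al-1}{1+k-\frac{\al}{2}}$ via the functional equation of $\Gamma$, concluding from the sign of $\al-1$. The only difference is cosmetic: you spell out the $\al=1$ computation (which the paper dismisses as trivial, citing a reference) and explicitly check that the prefactor $A$ is nonzero and that $M_k>0$, details the paper leaves implicit.
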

\begin{proof} The case $\alpha = 1$ is trivial and was already covered in \cite{Hassainia-Hmidi:v-states-generalized-sqg}. For the rest of the values of $\alpha$, it is enough to show that $\frac{\Gamma\left(k+\frac{\al}{2}\right)}{\Gamma\left(1+k-\frac{\al}{2}\right)}$ is monotonic with $k$. But we have

\begin{align*}
\frac{\Gamma\left(1+k+\frac{\al}{2}\right)}{\Gamma\left(2+k-\frac{\al}{2}\right)}
 = \frac{\Gamma\left(k+\frac{\al}{2}\right)}{\Gamma\left(1+k-\frac{\al}{2}\right)} \frac{k+\frac{\al}{2}}{1+k-\frac{\al}{2}}
 = \frac{\Gamma\left(k+\frac{\al}{2}\right)}{\Gamma\left(1+k-\frac{\al}{2}\right)} \left(1 + \frac{\al-1}{1+k-\frac{\al}{2}}\right).
\end{align*}

In the case $\alpha > 1$, the bracket is strictly greater than 1, and in the case $\alpha < 1$, the bracket is strictly smaller than 1, independently of $k$. Monotonicity for all cases follows.

\end{proof}

From Proposition \ref{representationOperator} and Proposition \ref{Omegammonotonic} it is immediate that if $\Omega = \Omega_{m}$, then the kernel is non trivial, has dimension 1, and it is generated by $\cos(mx)$.

We continue by computing the range of $D_{R} F(\Omega_m,1)$. To do so, we will prove that the range is indeed the set

\begin{align*}
\left\{
\begin{array}{cc}
\displaystyle  Z_{m} = \left\{f \in Y_{m}^{k-1}, f = \sum_{k>1}^{\infty}a_k \sin(kmx)\right\}, \text{ if } \al = 1 \\
 \displaystyle Z_{m} = \left\{f \in Y_{m}^{k-\al}, f = \sum_{k>1}^{\infty}a_k \sin(kmx)\right\}, \text{ if } \al > 1 \\
\end{array}
\right.
\end{align*}

If we are able to do so, then we are done since $Z_{m}$ is closed and has codimension 1 in $Y^{k-1}_{m}$ or $Y^{k-\al}_{m}$, depending on whether we are in the case $\al = 1$ or $\al > 1$. We note that by Proposition \ref{representationOperator}, the inclusion Range$(D_{R}F(\Omega_m,1)) \subset Z_{m} $ follows trivially. We now show the opposite one.

Let $g \in Z_{m}, g = \sum_{k>1}^{\infty}g_k\sin(kmx)$. We need to show that there exists an $h$ such that $D_{R}F(\Omega_m,1)(h) = g$. However, by the representation given by Proposition \ref{representationOperator}, such an $h$ exists and it is given by

\begin{align*}
 h(x) = \sum_{k>1}^{\infty}h_k\cos(kmx), \quad h_k = \frac{g_k}{km(\Omega_{km} - \Omega_m)}.
\end{align*}

We have to check that $h$ has the right regularity: for the case $\al = 1$, this will mean that $h \in X^{k+\log}$, whereas for the case $\al > 1$, $h$ will have to belong to $H^{k}$. In order to establish that condition, the following Lemma will be useful.

\begin{lemma}
\label{lemmagrowthomega}
In the case $\al = 1$, $\Omega_{m} \sim \log(m)$, and for $\al > 1$, $\Omega_{m} \sim m^{\al-1}$
\end{lemma}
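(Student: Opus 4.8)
The plan is to treat the two cases $\al=1$ and $1<\al<2$ separately, since the closed form for $\Omega_m$ is genuinely different; in both cases the estimate reduces to a standard asymptotic, namely a partial sum of odd reciprocals when $\al=1$, and a ratio of Gamma functions when $\al>1$.

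First, for $\al=1$, I would start from
$$\Omega_m = -\frac{2}{\pi}\sum_{j=2}^{m}\frac{1}{2j-1}$$
and compare the partial sum with the harmonic series. Writing $\frac{1}{2j-1} = \frac{1}{2j} + \frac{1}{2j(2j-1)}$, the second term is summable, so
$$\sum_{j=2}^{m}\frac{1}{2j-1} = \frac12 \sum_{j=2}^{m}\frac1j + O(1) = \frac12 \log m + O(1).$$
Hence $\Omega_m = -\frac{1}{\pi}\log m + O(1)$, so that $\Omega_m/\log m \to -1/\pi$, which is exactly the assertion $\Omega_m \sim \log m$. (Alternatively one can sandwich the sum between $\int \frac{dt}{2t-1}$ evaluated at consecutive integers to reach the same conclusion.)

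Second, for $1<\al<2$ the entire $m$-dependence sits in the ratio $\frac{\Gamma\left(m+\frac{\al}{2}\right)}{\Gamma\left(1+m-\frac{\al}{2}\right)}$. The key step is the Stirling-type Gamma-ratio asymptotic
$$\frac{\Gamma(m+a)}{\Gamma(m+b)} = m^{a-b}\left(1 + O(1/m)\right), \quad m\to\infty,$$
applied with $a=\frac{\al}{2}$ and $b=1-\frac{\al}{2}$, so that $a-b=\al-1>0$. This gives $\frac{\Gamma\left(m+\frac{\al}{2}\right)}{\Gamma\left(1+m-\frac{\al}{2}\right)} = m^{\al-1}(1+o(1))$, which dominates the $m$-independent constant $\frac{\Gamma\left(1+\frac{\al}{2}\right)}{\Gamma\left(2-\frac{\al}{2}\right)}$ inside the bracket. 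Therefore
$$\Omega_m = 2^{\al-1}\frac{\Gamma(1-\al)}{\left(\Gamma\left(1-\frac{\al}{2}\right)\right)^{2}}\, m^{\al-1}\left(1+o(1)\right),$$
which is the claimed $\Omega_m \sim m^{\al-1}$, provided the coefficient is a finite nonzero constant.

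The only delicate point — and the main obstacle, though it is not deep — is to check that this prefactor is finite and nonzero and to pin down the sign. For $1<\al<2$ one has $1-\al\in(-1,0)$, so $\Gamma(1-\al)$ is finite and strictly negative, while $1-\frac{\al}{2}\in(0,\tfrac12)$ gives $\Gamma\left(1-\frac{\al}{2}\right)>0$; hence $2^{\al-1}\frac{\Gamma(1-\al)}{\left(\Gamma\left(1-\frac{\al}{2}\right)\right)^{2}}$ is a finite negative number and $\Omega_m\to-\infty$ at the exact rate $m^{\al-1}$. This confirms $\Omega_m\sim m^{\al-1}$, and the whole argument rests only on the elementary harmonic-sum estimate in one case and the standard Stirling Gamma-ratio asymptotic in the other.
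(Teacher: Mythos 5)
Your proposal is correct and follows essentially the same route as the paper: for $\al>1$ the paper simply cites the Gamma-ratio asymptotic $\Gamma(m+a)/\Gamma(m+b)\sim m^{a-b}$ (Abramowitz--Stegun, Formula 6.1.46), which is exactly the estimate you apply with $a=\frac{\al}{2}$, $b=1-\frac{\al}{2}$, and for $\al=1$ the paper defers to the reference of Hassainia--Hmidi, whose content is the harmonic-sum estimate you carry out explicitly. Your version is simply self-contained, with the added (harmless) precision of identifying the sign and nonvanishing of the prefactor $2^{\al-1}\Gamma(1-\al)/\bigl(\Gamma\bigl(1-\frac{\al}{2}\bigr)\bigr)^{2}$, which the paper leaves implicit.
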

\begin{proof}
The case $\al = 1$ was proved in \cite{Hassainia-Hmidi:v-states-generalized-sqg} and the case $\al > 1$ follows directly from the asymptotic expansion of the Gamma function given by \cite[Formula 6.1.46,p.257]{Abramowitz-Stegun:handbook-mathematical-functions}.
\end{proof}

In the case $\al = 1$, we use the alternative characterization of the $X^{k+\log}$ spaces in Proposition \ref{alternativexlog}, and the asymptotic growth of $\Omega_m$ from Lemma
\ref{lemmagrowthomega} and bound the following quantity:

\begin{align*}
 \sum_{p>1}|h_p|^{2}(pm)^{2k}(1+\log(pm))^{2}
& = \sum_{p>1}\left|\frac{g_p}{pm(\Omega_{pm} - \Omega_m)}\right|^{2}(pm)^{2k}(1+\log(pm))^{2} \\
& \leq C \sum_{p>1} |g_p|^{2} (pm)^{2k-2} \left(\frac{1+\log(pm)}{\log(pm)}\right)^{2} \\
& \leq C \sum_{p>1} |g_p|^{2} (pm)^{2k-2} = C\|g\|_{H^{k-1}} < \infty
\end{align*}

In the case $\al > 1$, we compute the $H^{k}$ norm squared of $h$ and obtain

\begin{align*}
 \sum_{p>1}|h_p|^{2}(pm)^{2k} = \sum_{p>1}\left|\frac{g_p}{pm(\Omega_{pm} - \Omega_m)}\right|^{2}(pm)^{2k}
\leq C \sum_{p>1}\left|\frac{h_p}{pm(pm)^{\al-1}}\right|^{2}(pm)^{2k} \leq C \|g\|_{H^{k-\al}}^{2} < \infty
\end{align*}

This shows step 4.

\subsection{Step 5}

We will show step 5 using the previous characterization. First of all, we recall that

\begin{align*}
 F_{\Omega R}(\Omega_m,1) (h) = h'(x).
\end{align*}

Therefore,

\begin{align*}
 F_{\Omega R}(\Omega_m,1) (\cos(mx)) = -m\sin(mx),
\end{align*}

which does not belong to Range($\mathcal{F}$), as we wanted to prove.

\subsection{Step 6}

Since in Step 1 we showed the regularity, the only thing that is left is to show that $F(\Omega,R)$ has $m$-fold symmetry and can be written as a Fourier-sin series. To do so, we will use the following lemmas:

\begin{lemma}
\label{lemmaoddnessalpha1}
Let $\alpha \geq 1$. If $R(x)$ is even, then $F(\Omega,R)$ is odd.
\end{lemma}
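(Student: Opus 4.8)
The plan is to prove the parity statement termwise, exploiting the combined symmetry $x\mapsto -x$ together with the change of variables $y\mapsto -y$ inside each integral. Since $R$ is even, its derivative $R'$ is odd; hence the linear term $\Omega R'(x)$ is already odd, and it suffices to show that each $F_i(R)$ is odd in $x$.

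The key ingredients I would isolate first are the relevant parities. Evenness of $R$ gives $R(-x)=R(x)$ and $R'(-x)=-R'(x)$, and likewise $R(-y)=R(y)$, $R'(-y)=-R'(y)$. The trigonometric factors satisfy $\sin(-x+y)=-\sin(x-y)$, $\cos(-x+y)=\cos(x-y)$ and $\sin^2\!\big(\tfrac{-x+y}{2}\big)=\sin^2\!\big(\tfrac{x-y}{2}\big)$. Consequently the denominator $\big((R(x)-R(y))^2+4R(x)R(y)\sin^2(\tfrac{x-y}{2})\big)^{\al/2}$ is invariant under the simultaneous replacement $x\mapsto -x$, $y\mapsto -y$, using that both $R$ and $\sin^2(\tfrac{\cdot}{2})$ are even. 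Finally, since every integral is taken over the whole torus, the substitution $y\mapsto -y$ leaves the domain of integration unchanged.

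With these in hand I would compute $F_i(R)(-x)$ for each $i$ by first replacing $x$ by $-x$, then substituting $y\mapsto -y$, and collecting signs. For $F_1$ the prefactor $C(\al)/R(-x)=C(\al)/R(x)$ is unchanged, the numerator bracket $R(-x)R(-y)+R'(-x)R'(-y)=R(x)R(y)+R'(x)R'(y)$ is unchanged, and the sine factor contributes the single sign $-1$, so $F_1(R)(-x)=-F_1(R)(x)$. For $F_2$ the cosine factor and the denominator are invariant while $R'(-y)-R'(-x)=-(R'(y)-R'(x))$ supplies the sign. For $F_3$ the prefactor $R'(-x)/R(-x)=-R'(x)/R(x)$ supplies the sign, while $\cos(x-y)$ and the factor $R(-x)-R(-y)=R(x)-R(y)$ are invariant. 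In all three cases one obtains $F_i(R)(-x)=-F_i(R)(x)$, and therefore $F(\Omega,R)$ is odd.

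I expect no substantive obstacle here: the argument is entirely a bookkeeping of parities, and the only point that deserves explicit verification is the invariance of the denominator under the combined reflection, which rests on the evenness of $R$ and of $\sin^2(\tfrac{\cdot}{2})$. I would also remark that the hypothesis $\al\geq 1$ plays no role in this computation—the same symmetry argument is valid for every $\al\in(0,2)$—so the oddness conclusion in fact holds in the full range, and only the subsequent use of the lemma motivates stating it for $\al\geq 1$.
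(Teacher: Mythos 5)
Your proof is correct and follows exactly the paper's argument: the paper likewise establishes oddness termwise by substituting $y\mapsto -y$ and using that $R$ is even and $R'$ is odd, only omitting the detailed sign bookkeeping you write out. Your closing observation that the hypothesis $\al\geq 1$ is never used is also accurate; the symmetry argument works for all $\al\in(0,2)$.
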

\begin{proof}
The first term $\Omega R'(x)$ is clearly odd. To see the oddness of the other ones, it is enough to compute $F_i(R)(x)$ and $-F_i(R)(-x)$. One is obtained from the other by changing $y \mapsto -y$ and using the fact that $R(x)$ is even and $R'(x)$ is odd.

\end{proof}
\begin{corollary}
The Fourier series of $F(\Omega,R)$ consists only of sine terms.
\end{corollary}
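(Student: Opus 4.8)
The plan is to obtain this as an immediate consequence of Lemma \ref{lemmaoddnessalpha1}. First I would recall that the functions $R$ under consideration are exactly those of the form $R(x) = 1 + f(x)$ with $f \in X^{k+\log}$ (in the case $\al = 1$) or $f \in X^{k}$ (in the case $\al > 1$). By the very definition of these spaces, $f$ is a pure cosine series, $f(x) = \sum_{j} a_j \cos(jx)$, and is therefore an even function of $x$; adding the constant $1$ preserves evenness, so $R$ itself is even. This places us exactly in the hypothesis of Lemma \ref{lemmaoddnessalpha1}, whose conclusion is that $F(\Omega,R)$ is odd in $x$.

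Granting the oddness of $F(\Omega,R)$, the remaining step is the elementary observation that an odd $2\pi$-periodic function has a Fourier expansion involving only sine modes. Writing $F(\Omega,R)(x) = \tfrac{c_0}{2} + \sum_{k\geq 1}\left(c_k \cos(kx) + d_k \sin(kx)\right)$, the cosine and constant coefficients are given by $c_k = \tfrac{1}{\pi}\int F(\Omega,R)(x)\cos(kx)\,dx$. Since $F(\Omega,R)$ is odd while each $\cos(kx)$ (and the constant $1$) is even, every such integrand is odd, so all the $c_k$ vanish by integration over the torus, leaving only the $d_k \sin(kx)$ terms. This is precisely the claim.

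I do not expect any genuine obstacle here: the entire mathematical content is carried by Lemma \ref{lemmaoddnessalpha1}, and the corollary is merely the translation of ``odd function'' into ``Fourier-sine series.'' The only point requiring a word of justification is that one is entitled to speak of the termwise Fourier expansion of $F(\Omega,R)$ at all, but this is automatic: by Step 1 (Proposition \ref{prop1}) we already know $F(\Omega,R) \in Y^{k-1} \subset H^{k-1} \subset L^2(\T)$, so the Fourier series converges in $L^2$ and the coefficient integrals above are all well defined. Consequently the vanishing of the even part is rigorous and the corollary follows.
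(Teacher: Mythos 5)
Your proof is correct and takes the same route as the paper, which states this corollary as an immediate consequence of Lemma \ref{lemmaoddnessalpha1} without further argument: evenness of $R$ (since $R-1$ lies in a pure cosine-series space), hence oddness of $F(\Omega,R)$ by the lemma, hence vanishing of all cosine coefficients. Your added remark that $F(\Omega,R)\in Y^{k-1}\subset L^2(\T)$ justifies the Fourier expansion is a harmless refinement of what the paper leaves implicit.
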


\begin{lemma}
\label{lemmasymmetryalpha1}
Let $\alpha \geq 1$. If $R(x)$ is expressed as an $m$-fold series of cosines, then $F(\Omega,R)(x+\frac{2\pi}{m}) = F(\Omega,R)(x)$.
\end{lemma}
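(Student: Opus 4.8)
The plan is to push the $\frac{2\pi}{m}$-periodicity of $R$ through the entire functional $F(\Omega,R)$ by a translation of the integration variable. By hypothesis $R$ is an $m$-fold cosine series, so $R(x+\frac{2\pi}{m}) = R(x)$ for all $x$, and differentiating gives $R'(x+\frac{2\pi}{m}) = R'(x)$ as well. The term $\Omega R'(x)$ in $F$ is therefore immediately $\frac{2\pi}{m}$-periodic, so all the work is concentrated on the three integral terms $F_i(R)$.

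First I would write out $F_i(R)\!\left(x+\frac{2\pi}{m}\right)$ and perform the change of variable $y \mapsto y + \frac{2\pi}{m}$ inside the integral. Since every integral is taken over the torus (a full period $2\pi$) and the integrands are $2\pi$-periodic in $y$, this substitution leaves both the domain and the measure unchanged. I would then verify that after the simultaneous shift $(x,y)\mapsto(x+\frac{2\pi}{m},\,y+\frac{2\pi}{m})$ every building block of the integrand returns to its value at $(x,y)$. The trigonometric factors depend only on the difference $x-y$, so $\sin(x-y)$, $\cos(x-y)$ and $\sin^2\!\left(\frac{x-y}{2}\right)$ are all invariant under this shift. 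The remaining factors are evaluations of $R$ and $R'$ at the translated points, which by the $\frac{2\pi}{m}$-periodicity of $R$ and $R'$ coincide with their values at $x$ and $y$. In particular the denominators $\left(\left(R(x)-R(y)\right)^2 + 4R(x)R(y)\sin^2\left(\frac{x-y}{2}\right)\right)^{\al/2}$, the numerator combinations $R(x)R(y)+R'(x)R'(y)$, $R'(y)-R'(x)$, and $R(x)-R(y)$, together with the prefactors $\frac{1}{R(x)}$ and $\frac{R'(x)}{R(x)}$, are all left unchanged. Hence the integrand defining $F_i(R)\!\left(x+\frac{2\pi}{m}\right)$ is identical to the one defining $F_i(R)(x)$, giving $F_i(R)\!\left(x+\frac{2\pi}{m}\right) = F_i(R)(x)$ for each $i$.

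I do not expect a serious obstacle here; the proof is a clean symmetry argument. The only point requiring a word of care is justifying the change of variable on the torus and confirming that the kernel singularity, which sits at $x=y$, is carried along faithfully by the simultaneous shift rather than being introduced or destroyed. Once the three integral terms are shown invariant and combined with the already-periodic term $\Omega R'$, one concludes $F(\Omega,R)\!\left(x+\frac{2\pi}{m}\right) = F(\Omega,R)(x)$, as claimed. Note that although the statement is phrased for $\al \geq 1$, the argument uses nothing about the value of $\al$ beyond the convergence already guaranteed by the ambient function spaces.
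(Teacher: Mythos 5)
Your proposal is correct and follows essentially the same route as the paper's own proof: the term $\Omega R'(x)$ is handled by the $\frac{2\pi}{m}$-periodicity of $R'$, and each $F_i(R)$ is handled by the change of variable $y \mapsto y+\frac{2\pi}{m}$ on the torus together with $R(x)=R\left(x+\frac{2\pi}{m}\right)$ and $R'(x)=R'\left(x+\frac{2\pi}{m}\right)$. Your additional remarks on the invariance of the $x-y$ dependence and on the kernel singularity only make explicit what the paper leaves implicit.
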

\begin{proof}

The first term, $\Omega R'(x)$ satisfies

\begin{align*}
\Omega R'\left(x+\frac{2\pi}{m}\right) = \Omega R'(x),
\end{align*}

To check the property of the other terms, it is enough to compute $F_i(R)(x)$ and $F_i(R)\left(x+\frac{2\pi}{m}\right)$. One is obtained from the other by changing $y \mapsto y+\frac{2\pi}{m}$ and using the fact that $R(x) = R\left(x+\frac{2\pi}{m}\right)$ and $R'(x) = R'\left(x+\frac{2\pi}{m}\right)$.

%
%

\end{proof}

\end{proof}

\section{Existence for $1<\alpha <2$.}
\label{Sectionexistencealphamayor1}
This section is devoted to show Theorem \ref{teoremaexistenciaalphamayor1}.

\begin{proof}
The proof of this theorem follows the same steps that the proof of Theorem \ref{teoremaexistenciaalpha1}. It will be divided into 6 steps. These steps correspond to check the hypotheses of the Crandall-Rabinowitz theorem \cite{Crandall-Rabinowitz:bifurcation-simple-eigenvalues} for $$F(\Omega,R)=\Omega R'-\sum_{i=1}^3F_i(R),$$
where
\begin{align*}
F_1(R)=&\frac{C(\alpha)}{R(x)}\int\frac{\sin(x-y)}{\left(\left(R(x)-R(y)\right)^2+4R(x)R(y)\sin^2\left(\frac{x-y}{2}\right)\right)^\frac{\alpha}{2}}
\left(R(x)R(y)+R'(x)R'(y)\right)dy,\\
F_2(R)=&C(\alpha)\int\frac{\cos(x-y)}{\left(\left(R(x)-R(y)\right)^2+4R(x)R(y)\sin^2\left(\frac{x-y}{2}\right)\right)^\frac{\alpha}{2}}
\left(R'(y)-R'(x)\right)dy,\\
F_3(R)=&C(\alpha)\frac{R'(x)}{ R(x) }\int\frac{\cos(x-y)}{\left(\left(R(x)-R(y)\right)^2+4R(x)R(y)\sin^2\left(\frac{x-y}{2}\right)\right)^\frac{\alpha}{2}}
\left(R(x)-R(y)\right)dy,
\end{align*}
and they are the following
\begin{enumerate}
\item The functional $F$ satisfies $$F(\Omega,R)\,:\, \R\times \{1+V^r\}\mapsto Y^{k-1},$$ where $V^r$ is the open neighborhood of 0
$$V^r=\{ f\in X^{k+\alpha-1}\,:\, ||f||_{H^{k+\alpha-1}}<r\},$$
for  $0<r<1$ and $k\geq 3$.
\item $F(\Omega,1) = 0$ for every $\Omega$.
\item The partial derivatives $F_{\Omega}$, $F_{R}$ and $F_{R\Omega}$ exist and are continuous.
\item Ker($\mathcal{F}$) and $Y^{k-1}$/Range($\mathcal{F}$) are one-dimensional, where $\mathcal{F}$ is the linearized operator around the disk $R = 1$ at $\Omega = \Omega_m$.
\item $F_{\Omega R}(\Omega_m,1)(h_0) \not \in$ Range($\mathcal{F}$), where Ker$(\mathcal{F}) = \langle h_0 \rangle$.
\item Step 1 can be applied to the spaces $X^{k+\al-1}_{m}$ and $Y^{k-1}_{m}$ instead of $X^{k+\al-1}$ and $Y^{k-1}$.
\end{enumerate}

\subsection{Step 1}
\begin{prop}\label{prop2}
Let $0 < r < 1$, $k \geq 3$.  Then
 $$F(\Omega,R): \mathbb{R} \times \{1+V^r\} \mapsto Y^{k-1}.$$
\end{prop}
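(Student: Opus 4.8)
The plan is to run exactly the scheme of Proposition \ref{prop1}, replacing the exponent $\frac12$ in the kernel by $\frac{\al}{2}$ and the borderline space $X^{k+\log}$ by $X^{k+\al-1}$. First I would introduce the analogous splitting of the singular kernel,
\begin{align*}
\frac{1}{\left(\left(R(x)-R(y)\right)^2+4R(x)R(y)\sin^2\left(\frac{x-y}{2}\right)\right)^{\al/2}} = K_S(x,y) + \frac{1}{\left(R(x)^2+R'(x)^2\right)^{\al/2}}\frac{1}{\left(2\left|\sin\left(\frac{x-y}{2}\right)\right|\right)^{\al}},
\end{align*}
and check that, for $R-1\in V^r$, a Taylor expansion at $x=y$ gives $K_S(x,y)=O\!\left(|\sin(\tfrac{x-y}{2})|^{1-\al}\right)$, which is integrable since $\al<2$; hence $K_S\in\mathcal{H}_0$. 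The crucial structural difference with the case $\al=1$ is that the leading singularity is now $|\sin(\tfrac{x-y}{2})|^{-\al}$, strictly stronger than $|\sin|^{-1}$, so its principal part is a genuine fractional derivative of \emph{positive} order $\al-1$ rather than a logarithmically borderline operator.

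Next I would dispose of $F_1$ and $F_3$, which stay the easy terms. Their integrands carry the vanishing factors $\sin(x-y)$ and $R(x)-R(y)$ respectively, each worth one extra power of $|\sin(\tfrac{x-y}{2})|$. After the change $x-y\mapsto y$, applying $\pa^{k-1}$, and changing back, the most singular contributions pair a kernel of type $\frac{\sin(x-y)}{(\cdots)^{\al/2}}$ or, after extracting one difference, $\frac{\sin(x-y)(R(x)-R(y))}{(\cdots)^{\al/2+1}}$, against $\pa^k R$; all of these are $O\!\left(|\sin(\tfrac{x-y}{2})|^{1-\al}\right)$ and so lie in $\mathcal{H}_0$. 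Young's inequality then yields $\|\pa^{k-1}F_i\|_{L^2(\T)}\leq C(\|R\|_{H^{k+\al-1}},r)$ for $i=1,3$.

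The main obstacle is $F_2$, whose integrand carries $\cos(x-y)$ and hence no vanishing factor at $x=y$. Proceeding as above, the most singular term of $\pa^{k-1}F_2$ is $C(\al)\int\frac{\cos(x-y)}{(\cdots)^{\al/2}}\left(\pa^k R(y)-\pa^k R(x)\right)dy$. Using the kernel splitting together with $\cos(x-y)=1-2\sin^2(\tfrac{x-y}{2})$, the $K_S$ piece and the extra $\sin^2$ piece (of order $|\sin|^{2-\al}$) are controlled by Young's inequality, and the principal part reduces to
\begin{align*}
\frac{C(\al)}{2^{\al}\left(R(x)^2+R'(x)^2\right)^{\al/2}}\int\frac{\pa^k R(y)-\pa^k R(x)}{\left|\sin\left(\frac{x-y}{2}\right)\right|^{\al}}dy.
\end{align*}
The prefactor is bounded with bounded derivatives since $R-1\in V^r$, so it suffices to control the $L^2$-norm of the displayed integral. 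The key point, playing here the role that Proposition \ref{alternativexlog} played for $\al=1$, is that this operator is a Fourier multiplier whose symbol is computed in Lemma \ref{LemmaICIS}: on $\cos(jx)$ it produces an eigenvalue comparable in modulus to $\frac{\Gamma(j+\al/2)}{\Gamma(1+j-\al/2)}\sim j^{\al-1}$ by the asymptotics of the Gamma function (as already used in Lemma \ref{lemmagrowthomega}). Consequently, writing $R=\sum_j a_j\cos(jx)$,
\begin{align*}
\left\|\int\frac{\pa^k R(y)-\pa^k R(x)}{\left|\sin\left(\frac{x-y}{2}\right)\right|^{\al}}dy\right\|_{L^2(\T)}^2 \leq C\sum_{j}|a_j|^2 j^{2k}\, j^{2(\al-1)} = C\|R\|_{H^{k+\al-1}}^2<\infty,
\end{align*}
which is precisely why $X^{k+\al-1}$ is the correct space. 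Since $\al-1>0$ the operator genuinely smooths, so no logarithmic refinement of the Sobolev scale is required and the estimate is considerably cleaner than in the $\al=1$ case.

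It remains to treat the term of $\pa^{k-1}F_2$ carrying $(\cdots)^{-\al/2-1}$, the analogue of the $(\cdots)^{-3/2}$ term in Proposition \ref{prop1}. Here I would subtract and add a suitable multiple of $\frac{1}{2|\sin(\frac{x-y}{2})|^{\al}}\left(\pa^{k-1}R(y)-\pa^{k-1}R(x)\right)$ so that the remaining kernel acting on $\pa^{k-1}R(y)-\pa^{k-1}R(x)$ belongs to $\mathcal{H}_0$ and is dispatched by Young's inequality, while the subtracted multiple is again an order $\al-1$ operator applied to $\pa^{k-1}R\in H^{\al}$, which lands comfortably in $L^2$. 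The lower order terms from the Leibniz rule are strictly less singular and are estimated in the same way. Collecting the three contributions gives $F(\Omega,R)\in Y^{k-1}$ for $R-1\in V^r$. The step demanding the most care is the identification and $L^2$-control of the principal fractional operator in $F_2$, because it is exactly there that the precise exponent $k+\al-1$ of the Sobolev space is forced.
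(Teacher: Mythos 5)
Your proposal follows essentially the same route as the paper's proof: the same splitting of the kernel into $K_S\in\mathcal{H}_0$ plus the principal part $\frac{1}{(R(x)^2+R'(x)^2)^{\al/2}}\frac{1}{2^{\al}\left|\sin\left(\frac{x-y}{2}\right)\right|^{\al}}$, the same identification of the term of $\pa^{k-1}F_2$ acting on $\pa^k R(y)-\pa^k R(x)$ as the worst one, and the same mechanism for absorbing that term into the space $X^{k+\al-1}$. The only difference is directional: the paper \emph{defines} the $H^{k+\al-1}$ norm by the singular integral $\left\|\int \frac{\pa^k f(\cdot)-\pa^k f(y)}{\left|\sin\left(\frac{\cdot-y}{2}\right)\right|^{\al}}dy\right\|_{L^2}$ and remarks, via Lemma \ref{LemmaICIS}, that this agrees with the Fourier-side norm, whereas you start from the Fourier-side norm and invoke Lemma \ref{LemmaICIS} to bound the singular integral. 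These are the same fact read in opposite directions, so this is not a genuinely different argument.

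There is, however, one wrong power count in your treatment of $F_1$ and $F_3$. The kernel $\frac{\sin(x-y)(R(x)-R(y))}{(\cdots)^{\al/2+1}}$ (arising when one derivative hits the denominator) has numerator of size $\left|\sin\left(\frac{x-y}{2}\right)\right|^{2}$ and denominator of size $\left|\sin\left(\frac{x-y}{2}\right)\right|^{\al+2}$, so it is $O\left(\left|\sin\left(\frac{x-y}{2}\right)\right|^{-\al}\right)$, not $O\left(\left|\sin\left(\frac{x-y}{2}\right)\right|^{1-\al}\right)$; for $1<\al<2$ it is not integrable, hence not in $\mathcal{H}_0$, and Young's inequality applied to it alone fails. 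The term is nevertheless recoverable, because this kernel does not act on $\pa^{k}R$ but on the difference $\pa^{k-1}R(x)-\pa^{k-1}R(y)$: it is exactly of the type you already handle in the $(\cdots)^{-\al/2-1}$ term of $F_2$, namely an operator of order at most $\al-1$ applied to $\pa^{k-1}R\in H^{\al}$. One can either subtract and add the principal $\left|\sin\left(\frac{x-y}{2}\right)\right|^{-\al}$ part as you do there, or bound directly by Minkowski's integral inequality, using $\int \left|\sin\left(\frac{z}{2}\right)\right|^{-\al}\left\|\pa^{k-1}R(\cdot)-\pa^{k-1}R(\cdot-z)\right\|_{L^2}dz\leq C\|\pa^{k-1}R\|_{H^{1}}$ since $\min(|z|,1)\left|\sin\left(\frac{z}{2}\right)\right|^{-\al}$ is integrable for $\al<2$. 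With that correction your argument closes; note that the paper itself relegates these contributions to lower order terms without detail, so your write-up, once fixed, is actually more explicit than the original on this point.
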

\begin{proof}
We recall that the norm $||f||_{H^{k+\alpha-1}}$ can be defined as
\begin{align*}
||f||_{H^{k+\alpha-1}}=||f||_{H^k}+\left|\left|\int \frac{\pa^kf(\cdot)-\pa^k f(y)}{|\sin\left(\frac{\cdot-y}{2}\right)|^\alpha} dy\right|\right|_{L^2}.
\end{align*}

\begin{rem} Indeed we saw in Lemma \ref{LemmaICIS} that this definition is equivalent to
$$ ||f||_{H^{k+\alpha-1}}=||f||_{H^k} + || |m|^{\alpha-1}\hat{f}_m||_{l^2}.$$
\end{rem}
Using this definition makes quite similar the proofs of Theorems \ref{teoremaexistenciaalpha1} and \ref{teoremaexistenciaalphamayor1}.

We will use the following decomposition
\begin{align*}
\frac{1}{\left(\left(R(x)-R(y)\right)^2+4R(x)R(y)\sin^2\left(\frac{x-y}{2}\right)\right)^\frac{\alpha}{2}}
= K_{S}(x,y)+\frac{1}{\left(R(x)^2+R'(x)^2\right)^\frac{\alpha}{2}}\frac{1}{2^\alpha\left|\sin\left(\frac{x-y}{2}\right)\right|^\alpha},
\end{align*}
where the kernel
\begin{align*}
K_S(x,y)\equiv \frac{1}{\left(\left(R(x)-R(y)\right)^2+4R(x)R(y)\sin^2\left(\frac{x-y}{2}\right)\right)^\frac{\alpha}{2}}-
\frac{1}{\left(R(x)^2+R'(x)^2\right)^\frac{\alpha}{2}}\frac{1}{2^\alpha\left|\sin\left(\frac{x-y}{2}\right)\right|^\alpha}
\end{align*}

belongs to $\mathcal{H}_0$.

Again the most singular term is $\pa^{k-1}F_2$. Making the change of variable  $x-y \mapsto y$, taking $\pa^{k-1}$ derivatives with respect to $x$  and changing again to $y\mapsto x-y$ yields
\begin{align*}
\pa^{k-1} F_{2}(R)=& C(\alpha)\int\frac{\cos(x-y)}{\left(\left(R(y)-R(x)\right)^2+4R(x)R(y)\sin^2\left(\frac{x-y}{2}\right)\right)^\frac{\alpha}{2}}
\left(\pa^k R(x)-\pa^k R(y)\right)dy\\
&+ \text{l.o.t.}
\end{align*}
We will split the first term as follows
\begin{align*}
\pa^{k-1} F_{2}(R)=&C(\alpha)\int \cos(x-y)K_{S}(x,y)\left(\pa^k R(y)-\pa^kR(x)\right)dy\\
&-C(\alpha)\int \frac{\left|\sin\left(\frac{x-y}{2}\right)\right|}{\left(R(x)^2+R'(x)^2\right)^\frac{\alpha}{2}}\left(\pa^k R(y)-\pa^k R(x)\right)dy\\
&+C(\alpha)\frac{1}{2^\alpha \left(R(x)^2+R'(x)^2\right)^\frac{\al}{2}} \int \frac{(\pa^k R(y)-\pa^k R(x))}{\left|\sin\left(\frac{x-y}{2}\right)\right|^{\alpha}}.
\end{align*}
Therefore,
 because of the definition of the space $X^{k+\alpha-1}$ we have that
\begin{align*}
||\pa^{k-1} F_2(R)||_{L^2(\T)}\leq C\left(||R||_{X^{k+\log}},r\right).
\end{align*}
\end{proof}
\subsection{Step 2}
Again it is trivial to prove that $F(\Omega,1)=0$.
\subsection{Step 3}
We need to prove the existence and the continuity of the Gateaux derivatives $\pa_\Omega F(\Omega,R)$, $\pa_R F(\Omega,R)$ and $\pa_{\Omega,R}F(\Omega,R)$. In order to do it the most difficult part is to show the existence and continuity of $\pa_R F_i(R)$ for $i=1,2,3$.
\begin{lemma} \label{gatoderivadamayor1}
For all $R\in V^r$ and for all $h\in X^{k+\al-1}$ such that $||h||_{X^{k+\al-1}}=1$ we have that
$$\lim_{t \to 0}\frac{F_i(R+th)-F_i(R)}{t}= D_i[R]h\quad \text{in $Y^{k-1}$},$$
where
\begin{align*}
D_1[R] h =& -C(\alpha)\frac{h(x)}{ R(x)^2}\int \frac{\sin(x-y)}{\left((R(x)-R(y))^2+4R(x)R(y)\sin^2\left(\frac{x-y}{2}\right)\right)^\frac{\alpha}{2}} \left(R(x)R(y)+R'(x)R'(y)\right)dy\\
& +C(\alpha)\frac{1}{ R(x)}\int \frac{\sin(x-y)}{\left((R(x)-R(y))^2+4R(x)R(y)\sin^2\left(\frac{x-y}{2}\right)\right)^\frac{\alpha}{2}}\\&\times(h(x)R(y)+h(y)R(x)+(h'(x)R'(y)+h'(y)R'(x)))dy\\
&-\alpha C(\alpha)\frac{1}{ R(x)}\int \frac{\sin(x-y)(R(x)R(y)+R'(x)R'(y))}{\left((R(x)-R(y))^2+4R(x)R(y)\sin^2\left(\frac{x-y}{2}\right)\right)^{\frac{\alpha}{2}+1}}\\
&\times\left((R(x)-R(y))(h(x)-h(y))+2(h(x)R(y)+h(y)R(x))\sin^2\left(\frac{x-y}{2}\right)\right)dy\\
D_2[R] h = & C(\alpha)\int\frac{\cos(x-y)}{\left((R(x)-R(y))^2+4R(x)R(y)\sin^2\left(\frac{x-y}{2}\right)\right)^\frac{\alpha}{2}}\left(h'(y)-h'(x)\right)dy\\
&-\alpha C(\alpha)\int\frac{\cos(x-y)(R'(y)-R'(x))}{\left((R(x)-R(y))^2+
4R(x)R(y)\sin\left(\frac{x-y}{2}\right)\right)^{\frac{\alpha}{2}+1}}\\ & \times \left((R(x)-R(y))(h(x)-h(y))+2(h(x)R(y)+h(y)R(x))\sin^2\left(\frac{x-y}{2}\right)\right)dy\\
D_3[R]h= & C(\alpha)\frac{h'(x)}{ R(x)}\int\frac{\cos(x-y)}{\left(\left(R(x)-R(y)\right)^2+4R(x)R(y)\sin^2\left(\frac{x-y}{2}\right)\right)^\frac{\alpha}{2}}(R(x)-R(y))dy\\
&-C(\alpha)\frac{R'(x) h(x)}{ R(x)^2}\int\frac{\cos(x-y)}{\left(\left(R(x)-R(y)\right)^2+4R(x)R(y)\sin^2\left(\frac{x-y}{2}\right)\right)^\frac{\alpha}{2}}
(R(x)-R(y)) dy \\
&+C(\alpha)\frac{R'(x)}{ R(x)}\int\frac{\cos(x-y)}{\left(\left(R(x)-R(y)\right)^2+4R(x)R(y)\sin^2\left(\frac{x-y}{2}\right)\right)^\frac{\alpha}{2}}(h(x)-h(y))dy\\
&-\alpha C(\alpha)\frac{R'(x)}{ R(x)}\int \frac{\cos(x-y)(R(x)-R(y))}{\left((R(x)-R(y))^2+
4R(x)R(y)\sin\left(\frac{x-y}{2}\right)\right)^{\frac{\alpha}{2}+1}}\\ & \times \left((R(x)-R(y))(h(x)-h(y))+2(h(x)R(y)+h(y)R(x))\sin^2\left(\frac{x-y}{2}\right)\right)dy.
\end{align*}
Moreover, $D_i[R] h$ are continuous in $R$.
\end{lemma}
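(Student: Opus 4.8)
The plan is to follow verbatim the architecture of the proof of Lemma \ref{gatoderivada}, replacing throughout the exponent $\tfrac12$ by $\tfrac{\al}{2}$ and the singular kernel $|\sin(\tfrac{\cdot}{2})|^{-1}$ by $|\sin(\tfrac{\cdot}{2})|^{-\al}$. As there, the only genuinely delicate term is $F_2$: the formulas for $D_1[R]h$ and $D_3[R]h$ are obtained by the same Leibniz-type differentiation of numerator and denominator, and since their kernels are strictly less singular I would only indicate the changes. For $F_2$ I would write $\frac{F_2(R+th)-F_2(R)}{t}-D_2[R]h$ and split it, exactly as in the case $\al=1$, into a piece $I_1$ carrying the factor $\cos(x-y)(R'(x)-R'(y))$ together with the second variation of the denominator, and a piece $I_2$ carrying $\cos(x-y)(h'(x)-h'(y))$ together with the first difference of the denominator, using the abbreviations $DR$ and $DRh$.

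First I would dispose of $I_2$. Since the conjugate identity no longer applies for general $\al$, I would replace it by the representation
\begin{align*}
DRh^{-\frac{\al}{2}}-DR^{-\frac{\al}{2}}=-\frac{\al}{2}\int_0^1\left(DR+s(DRh-DR)\right)^{-\frac{\al}{2}-1}ds\,(DRh-DR),
\end{align*}
and observe that $DRh-DR=2t\Dr\Dh+t^2\Dh^2+4\left(t(h\oR+\oh R)+t^2h\oh\right)\sin^2\left(\frac{x-y}{2}\right)$ is $O(t)$. After taking $\pa^{k-1}$ with the customary change of variables, the leading contribution acts on $(\pa^kh(x)-\pa^kh(y))$; I would then decompose the resulting kernel as an $\mathcal{H}_0$ kernel plus a multiple of $|\sin(\frac{x-y}{2})|^{-\al}$, exactly as done in Proposition \ref{prop2}. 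The $\mathcal{H}_0$ part is controlled by Young's inequality and the singular part by the very definition of the $X^{k+\al-1}$ norm, giving $\|\pa^{k-1}I_2\|_{L^2}\le tC(\|R\|_{H^{k+\al-1}},r)\to 0$.

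The main obstacle is $I_1$, where the top-order factor is $R'$ and one must exhibit a genuine second-order cancellation in $t$. As in the case $\al=1$, after taking $\pa^{k-1}$ the leading term acts on $(\pa^kR(x)-\pa^kR(y))$ with the bracket
\begin{align*}
\frac{1}{DRh^{\frac{\al}{2}}}-\frac{1}{DR^{\frac{\al}{2}}}+t\,\al\,\frac{\Dr\Dh+2(h\oR+\oh R)\sin^2\left(\frac{x-y}{2}\right)}{DR^{\frac{\al}{2}+1}},
\end{align*}
which is precisely the second-order Taylor remainder of $s\mapsto(DR+s\,\delta)^{-\frac{\al}{2}}$ and hence equals $t^2K[R,h,t]$. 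The point to verify carefully is that the resulting kernel $K[R,h,t]$ splits, uniformly for small $t$, into an $\mathcal{H}_0$ part plus a part no more singular than $|\sin(\frac{x-y}{2})|^{-\al}$: indeed near the diagonal $DR\sim(R^2+(R')^2)\sin^2(\frac{x-y}{2})$, so that the worst denominator $DR^{\frac{\al}{2}+2}$ behaves like $|\sin(\frac{x-y}{2})|^{-\al-4}$, while the two numerator differences supplied by $\delta^2/t^2$ contribute four powers of $\sin(\frac{x-y}{2})$, leaving exactly the order $|\sin(\frac{x-y}{2})|^{-\al}$. With the extra factor $t^2/t=t$ this forces $\|\pa^{k-1}I_1\|_{L^2}\le tC\to 0$, establishing the existence of $D_2[R]h$ in $Y^{k-1}$.

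For the continuity of $D_2[R]h$ in $R$, I would use the analogous representation
\begin{align*}
\frac{1}{DR^{\frac{\al}{2}}}-\frac{1}{Dr^{\frac{\al}{2}}}=-\frac{\al}{2}\int_0^1\left(Dr+s(DR-Dr)\right)^{-\frac{\al}{2}-1}ds\,(DR-Dr),
\end{align*}
together with the expansion of $DR-Dr$ into a factor linear in $R-r$ with bounded coefficients, exactly as in Lemma \ref{gatoderivada}, to obtain
\begin{align*}
\|D_2[R]h-D_2[r]h\|_{H^{k-1}}\le C\left(\|R\|_{H^{k+\al-1}},\|r\|_{H^{k+\al-1}},r\right)\|R-r\|_{H^{k+\al-1}}.
\end{align*}
The corresponding statements for $D_1$ and $D_3$ follow in the same way and are easier since their kernels are less singular, which completes the proof.
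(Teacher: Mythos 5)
Your proposal is correct and follows essentially the paper's own route: the paper's proof of Lemma \ref{gatoderivadamayor1} is literally the one-line instruction to repeat the proof of Lemma \ref{gatoderivada} with the modifications that distinguish Proposition \ref{prop2} from Proposition \ref{prop1}, and that is exactly what you carry out, with the same splitting into $I_1$ and $I_2$, the same kernel decomposition into an $\mathcal{H}_0$ part plus a $|\sin(\frac{x-y}{2})|^{-\al}$ part handled by Young's inequality and the definition of $X^{k+\al-1}$. Your replacement of the square-root conjugate identities by the first- and second-order Taylor (integral) representations of $s\mapsto(DR+s\,\delta)^{-\al/2}$ is precisely the ``similar modification'' the paper leaves implicit, and your singularity count ($DR^{-\frac{\al}{2}-2}\sim|\sin|^{-\al-4}$ against the four powers of $\sin$ from $\delta^2/t^2$) confirms it works uniformly in small $t$.
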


\begin{proof}
The proof of this lemma follows the same steps of the proof of Lemma \ref{gatoderivada} with similar modifications that the ones done in the proof of proposition \ref{prop2} with respect to the proof of proposition \ref{prop1}.
\end{proof}
\subsection{Steps 4, 5 and 6}
This is already done in the  previous section.
\end{proof}

\section{Regularity of solutions}

In this section, we will show that the solutions that we found and had limited regularity $X = \{X^{k}, X^{k+\alpha-1}, X^{k+\log}\}$ depending on whether $\al$ is smaller than, greater than or equal to 1 respectively) are indeed $C^{\infty}$. We will work with solutions that are contained in $B_r(1)$, the ball of radius $r$ around 1, which for simplicity we will denote by $B_r$. It will be clear from the context what norm to use in the different cases. To show the regularity, we will use the following common strategy in the three different cases $\al < 1$, $\al > 1$, and $\al = 1$:

 \begin{enumerate}
  \item Take $k-1$ derivatives and put the equation into the form
 \begin{align*}
  (LI+S)(\pa^{k-1} R)(x) = g(R)(x),
 \end{align*}
 where $LI$ is linear and invertible and $S$ satisfies $\|S(R_r)(x)\| \leq C_r\|R_r\|$, where $C_r  \rightarrow 0$ when $r \to 0$ for every $R_r \in B_r$. It is crucial that $C_r$ is bounded independently of $k$, since otherwise $B_r$ would shrink to 0 whenever we let $k$ go to infinity. 

 $LI+S$ will map functions from $H^{2-\al}$ into $H^{1-\al}$, $H^{2\al-1}$ into $H^{\al-1}$ or $X^{2+\log}$ into $H^{1}$ depending on $\al$ being smaller than, greater than or equal to 1 respectively and will be invertible since $C_r$ and $S$ can be as small as desired by taking $r$ small enough. We remark here that we are not inverting $R$ but $\pa^{k-1} R$ and we are regarding both $LI$ and $S$ as linear operators (that depend on $R$ and its lower order derivatives) acting on $\pa^{k-1} R$. 

 \item Show that if $R(x) \in X$, then $g(R)(x)$ is in $H^{\beta}$, where $\beta > 0$. This will allow us to bootstrap.
 \end{enumerate}

For simplicity, we will show how to bootstrap when $k$ is an integer. However, the proof can be adapted for the case $k \not \in \mathbb{Z}$.

 \subsection{The case $0 < \al < 1$}

 In this subsection, we will use $H^{k}$ spaces. There is no obstruction to the use of $C^{k}$ spaces, which is the space in which the existence of solutions was proved in \cite{Hassainia-Hmidi:v-states-generalized-sqg}.

We will choose the following $LI$, $S$ and $g(R)$. It is immediate to check that they satisfy equation \eqref{funcionalvstates} after taking $k-1$ derivatives:

\begin{align*}
  LI(\pa^{k-1} R)(x) & = \Omega (\pa^{k-1} R)'(x) + (\pa^{k-1} R)'(x) C(\al) \int \frac{\cos(y)}{\left(4\sin^2\left(\frac{y}{2}\right)\right)^{\al/2}}dy \\
S(\pa^{k-1} R)(x) & = -C(\al) \frac{(\pa^{k-1} R)'(x)}{R(x)} \int \frac{\cos(y)(R(x-y)-R(x))+\sin(y)R'(x-y)}{\left((R(x)-R(x-y))^2+4R(x)R(x-y)\sin^{2}\left(\frac{y}{2}\right)\right)^{\al/2}}dy \\
& + (\pa^{k-1} R)'(x) C(\al) \int \frac{\cos(y)}{\left(4\sin^2\left(\frac{y}{2}\right)\right)^{\al/2}}\left[\frac{1}{\left(\left(\frac{R(x)-R(x-y)}{2\sin\left(\frac{y}{2}\right)}\right)^{2} + R(x)R(x-y)\right)^{\al/2}}-1\right]dy \\
& = S_1(\pa^{k-1} R) + S_2(\pa^{k-1} R) \\
g(R) & = \pa^{k-1} \left(C(\al)\int \frac{\sin(y)R(x-y) + \cos(y)R'(x-y)}{\left((R(x)-R(x-y))^2+4R(x)R(x-y)\sin^{2}\left(\frac{y}{2}\right)\right)^{\al/2}}dy\right) \\
& + \pa^{k-1}\left(C(\al) \frac{R'(x)}{R(x)} \int \frac{\cos(y)(R(x-y)-R(x))+\sin(y)R'(x-y)}{\left((R(x)-R(x-y))^2+4R(x)R(x-y)\sin^{2}\left(\frac{y}{2}\right)\right)^{\al/2}}dy\right) \\
&  -C(\al) \frac{(\pa^{k-1} R)'(x)}{R(x)} \int \frac{\cos(y)(R(x-y)-R(x))+\sin(y)R'(x-y)}{\left((R(x)-R(x-y))^2+4R(x)R(x-y)\sin^{2}\left(\frac{y}{2}\right)\right)^{\al/2}}dy \\
& - \left(\pa^{k-1}\left(R'(x) C(\al) \int \frac{\cos(y)}{\left(4\sin^2\left(\frac{y}{2}\right)\right)^{\al/2}}\left[\frac{1}{\left(\left(\frac{R(x)-R(x-y)}{2\sin\left(\frac{y}{2}\right)}\right)^{2} + R(x)R(x-y)\right)^{\al/2}}-1\right]dy \right)\right. \\
& \left.- (\pa^{k-1} R)'(x) C(\al) \int \frac{\cos(y)}{\left(4\sin^2\left(\frac{y}{2}\right)\right)^{\al/2}}\left[\frac{1}{\left(\left(\frac{R(x)-R(x-y)}{2\sin\left(\frac{y}{2}\right)}\right)^{2} + R(x)R(x-y)\right)^{\al/2}}-1\right]dy\right) \\
& = G_1 + G_2 + G_3
\end{align*}

\begin{lemma}
\label{lemmaISalphamenor1}
$LI$ and $S$ satisfy the following properties:

\begin{enumerate}
 \item $LI$ is linear and invertible, and maps $H^{2-\al}$ into $H^{1-\al}$.
\item $\|S(\pa^{k-1} R_r)(x)\|_{H^{1-\al}} \leq C_r\|\pa^{k-1} R_r\|_{H^{2-\al}}$, where $C_r  \rightarrow 0$ when $r \to 0$ for every $R_r \in B_r$ and $C_r$ is independent of $k$.
\end{enumerate}

\end{lemma}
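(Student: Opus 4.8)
The plan is to prove the two assertions separately, the first being essentially algebraic and the second analytic. For the first, I observe that $LI$ is merely a constant multiple of differentiation: setting
\[
\mu = \Omega + C(\al)\int \frac{\cos(y)}{\left(4\sin^2\left(\frac{y}{2}\right)\right)^{\al/2}}dy,
\]
where the integral converges precisely because $\al<1$, one has $LI(f)=\mu f'$. Since $\pa_x$ is the Fourier multiplier $j\mapsto ij$, it is an isomorphism from the zero-mean part of $H^{2-\al}$ onto that of $H^{1-\al}$; as $\pa^{k-1}R$ carries only frequencies $j\ge 1$ for $k\ge 3$, no constant mode intervenes and $LI=\mu\pa_x$ inherits this isomorphism once $\mu\neq 0$. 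Checking $\mu\neq 0$ is the only delicate point of the first part: a direct computation (or comparison with the symbol in Proposition \ref{representationOperator}, which gives $-k(\Omega-\Omega_k)$ on the $k$-th mode) shows that $\mu=\Omega-\Omega_\infty$ with $\Omega_\infty=\lim_{k\to\infty}\Omega_k$, and by the strict monotonicity of Proposition \ref{Omegammonotonic} the value $\Omega_m$ is bounded away from $\Omega_\infty$, so $\mu\neq 0$ for $\Omega$ in a small neighborhood of the bifurcation value.

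For the second assertion the key structural observation is that both pieces of $S$ act on $\pa^{k-1}R$ only through its first derivative, multiplied by a coefficient built from $R$ and $R'$ alone. One writes $S(\pa^{k-1}R)=c(x)\,(\pa^{k-1}R)'(x)$ with $c=c_1+c_2$, where
\[
c_1(x)=-\frac{C(\al)}{R(x)}\int\frac{\cos(y)(R(x-y)-R(x))+\sin(y)R'(x-y)}{\left((R(x)-R(x-y))^2+4R(x)R(x-y)\sin^2\left(\frac{y}{2}\right)\right)^{\al/2}}dy
\]
and $c_2$ is the coefficient appearing in $S_2$. The crucial feature is that both $c_1$ and $c_2$ vanish identically at $R\equiv 1$: in $c_1$ the numerator is $O(y^2)$ and the leading $O(y)$ terms cancel, so the integrand decays like $|y|^{2-\al}$ and the whole integral is zero when $R$ is constant; in $c_2$ the bracket $[\,\cdots-1\,]$ is zero at $R\equiv 1$. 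Consequently each $c_i$ depends on $R-1$ only through finitely many low-order derivatives, in a Lipschitz manner that vanishes at the disk, whence
\[
\|c_i\|_{W^{1,\infty}}\le C\,\|R-1\|_{C^2}\le C\,\|R-1\|_X< C r =: C_r,
\]
with $C$, and hence $C_r$, independent of $k$ because only a fixed number of derivatives of $R$ enter (the embedding $H^{3}\hookrightarrow C^{2}$ together with $\|R-1\|_{H^3}\le\|R-1\|_{H^k}$ for $k\ge 3$).

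To close, I would exploit that $1-\al\in(0,1)$ and that a Lipschitz function is a pointwise multiplier on $H^{s}$ for every $s\in(0,1)$: splitting $c(x)f(x)-c(y)f(y)=c(x)(f(x)-f(y))+f(y)(c(x)-c(y))$ inside the Gagliardo seminorm and using $\int|x-y|^{1-2s}\,d(x-y)<\infty$ for $s<1$ gives $\|cf\|_{H^{1-\al}}\le C\|c\|_{W^{1,\infty}}\|f\|_{H^{1-\al}}$. Applying this with $f=(\pa^{k-1}R)'$ and using $\|(\pa^{k-1}R)'\|_{H^{1-\al}}\le\|\pa^{k-1}R\|_{H^{2-\al}}$ yields
\[
\|S(\pa^{k-1}R)\|_{H^{1-\al}}\le C\|c\|_{W^{1,\infty}}\|\pa^{k-1}R\|_{H^{2-\al}}\le C_r\|\pa^{k-1}R\|_{H^{2-\al}}.
\]

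The main obstacle, and the entire purpose of this decomposition, is the uniformity of $C_r$ in $k$. This is secured precisely because $S$ never differentiates $\pa^{k-1}R$ more than once and builds its coefficient out of low derivatives of $R$, so the multiplier norm of $c$ is controlled by a fixed low-order norm of $R-1$ rather than by $\|R-1\|_X$ with a $k$-dependent constant; were $C_r$ to grow with $k$, the admissible ball $B_r$ would collapse as $k\to\infty$ and the bootstrap would fail. The remaining work is routine: justifying differentiation under the integral sign in the bounds for $\|c_i\|_{W^{1,\infty}}$ (using the $O(|y|^{2-\al})$ integrability of the integrands for $\al<1$) and recording the analogous, easier estimates for $S_1$ and $S_2$ individually.
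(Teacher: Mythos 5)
Your proof is correct, but the second half follows a genuinely different route from the paper's. Part (1) is the paper's argument in different clothing: the paper evaluates $C(\al)\int\cos(y)\left(4\sin^{2}\left(\frac{y}{2}\right)\right)^{-\al/2}dy$ explicitly via the identity \eqref{integralgammas} and checks that adding $\Omega_m$ yields $2^{\al-1}\frac{\Gamma(1-\al)}{\left(\Gamma\left(1-\frac{\al}{2}\right)\right)^{2}}\frac{\Gamma\left(m+\frac{\al}{2}\right)}{\Gamma\left(1+m-\frac{\al}{2}\right)}\neq 0$, which is exactly your $\Omega_m-\Omega_\infty$, since that integral equals $-\Omega_\infty$ (the limit existing precisely because $\al<1$); your dispersion-relation packaging is equivalent, and you are in fact more careful than the paper in restricting to mean-zero functions so that $\pa_x$ is honestly invertible. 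Part (2) is where you diverge. The paper bounds $S_1$ and $S_2$ separately through the Moser-type estimate of Lemma \ref{lemmafolland}, $\|\phi f\|_{H^{s}}\leq C\left(\|\phi\|_{L^{\infty}}\|f\|_{H^{s}}+\|\phi\|_{H^{s+\sigma}}\|f\|_{L^{2}}\right)$, which obliges it to control the coefficients in the fractional norm $H^{1-\al+\sigma}$, $\sigma>\frac12$ (a bound it asserts rather than proves), and for $S_2$ it inserts the elementary inequality for $\left|p^{-\al}-q^{-\al}\right|$. You instead collapse $S$ into a single product $c(x)\,(\pa^{k-1}R)'(x)$ and use that $W^{1,\infty}$ functions are pointwise multipliers on $H^{s}$ for $0<s<1$, proved by splitting the Gagliardo seminorm. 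Your route buys self-containedness and makes the $k$-independence transparent: only the fixed low-order norm $\|c\|_{W^{1,\infty}}\leq C\|R-1\|_{C^{2}}\leq Cr$ enters, via the $k$-independent embedding $H^{3}\hookrightarrow C^{2}$, and no fractional Sobolev bound on the coefficient is ever needed. The price is that the Gagliardo argument is confined to exponents in $(0,1)$ --- harmless here since $1-\al\in(0,1)$, but Lemma \ref{lemmafolland} is a single tool valid for all $s>0$ that the paper reuses verbatim in its $\al\geq 1$ sections. Two cosmetic remarks: the $O(y^{2})$ cancellation you invoke for $c_1$ is not actually needed for convergence when $\al<1$ (an $O(y)$ numerator already gives an integrable $O\left(|y|^{1-\al}\right)$ integrand), and when $R\equiv 1$ the numerator of $c_1$ vanishes identically, so its integral is trivially zero; neither point affects your estimates.
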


\begin{proof}
\begin{enumerate}
 \item The linearity of $LI$ is trivial. To check that $LI$ is invertible, we first compute the following integral using formula  \eqref{integralgammas}:

\begin{align*}
 \int \frac{\cos(y)}{(4\sin^2\left(\frac{y}{2}\right))^{\al/2}}dy
= -2\pi \frac{\Gamma(1-\al)}{\Gamma\left(2-\frac{\al}{2}\right)\Gamma\left(-\frac{\al}{2}\right)}.
\end{align*}

Therefore

\begin{align*}
 C(\al)\int \frac{\cos(y)}{(4\sin^2\left(\frac{y}{2}\right))^{\al/2}}dy
& = - 2^{\al-1}\frac{\Gamma(1-\al)\Gamma\left(\frac{\al}{2}\right)}{\Gamma\left(2-\frac{\al}{2}\right)\Gamma\left(-\frac{\al}{2}\right)\Gamma\left(1-\frac{\al}{2}\right)} \\
& = -2^{\al-1}\frac{\Gamma(1-\al)}{\left(\Gamma\left(1-\frac{\al}{2}\right)\right)^{2}}\left(\frac{\Gamma\left(\frac{\al}{2}\right)\left(-\frac{\al}{2}\right)}{\Gamma\left(1-\frac{\al}{2}\right)\left(1-\frac{\al}{2}\right)}\right)
= -2^{\al-1}\frac{\Gamma(1-\al)}{\left(\Gamma\left(1-\frac{\al}{2}\right)\right)^{2}}\left(-\frac{\Gamma\left(1+\frac{\al}{2}\right)}{\Gamma\left(2-\frac{\al}{2}\right)}\right),
\end{align*}

which implies

\begin{align*}
 \Omega_m +  C(\al)\int \frac{\cos(y)}{(4\sin^2\left(\frac{y}{2}\right))^{\al/2}}dy
& =2^{\al-1}\frac{\Gamma(1-\al)}{\left(\Gamma\left(1-\frac{\al}{2}\right)\right)^{2}}\left(\frac{\Gamma\left(m+\frac{\al}{2}\right)}{\Gamma\left(1+m-\frac{\al}{2}\right)}\right),
\end{align*}

which is different than zero for any real $m$. We remark that the possible values that $\Omega$ can take are neighborhoods of $\Omega_m$, i.e. neighbourhoods of integers $m$ in the previous formula, but the multiplier is non-zero for any value of $m$, which is a stronger statement. We can conclude the invertibility of $LI$ from this.

 \item 

We will use the following estimate:

\begin{lemma}
\label{lemmafolland}
 Let $s>0$, $\sigma > \frac12$ and let $\phi \in H^{s+\sigma} \cap L^{\infty}$, $f \in H^{s}$. Then:

\begin{align*}
 \|\phi f\|_{H^{s}} \leq C(\|\phi\|_{L^{\infty}}\|f\|_{H^{s}} + \|\phi\|_{H^{s+\sigma}}\|f\|_{L^{2}}).
\end{align*}

\end{lemma}

If we take $H^{1-\al}$ norms of $S_1(\pa^{k-1} R)$,

\begin{align*}
\|S_1\|_{H^{1-\al}} \leq C \|\pa^{k-1} R\|_{H^{2-\al}} \|R'\|_{L^{\infty}} + C_{r} \|\pa^{k-1} R\|_{H^{1}} = C_r \|\pa^{k-1} R\|_{H^{2-\al}},
\end{align*}

where we have used Lemma \ref{lemmafolland} and

\begin{align*}
 \left\|C(\al) \frac{1}{R(x)} \int \frac{\cos(y)(R(x-y)-R(x))+\sin(y)R'(x-y)}{\left((R(x)-R(x-y))^2+4R(x)R(x-y)\sin^{2}\left(\frac{y}{2}\right)\right)^{\al/2}}dy\right\|_{H^{1-\al+\sigma}} \leq C_r,
\end{align*}

where $\sigma > \frac12$. This shows the boundedness of $S_1$.

To bound $S_2$, we will use that for any $p,q > 0$:

\begin{align*}
 \left|\frac{1}{p^{\al}} - \frac{1}{q^{\al}}\right| \leq C_{\al}|q-p|\frac{1}{p^{\al}q^{\al}(p^{1-\al} + q^{1-\al})}.
\end{align*}

If we take

\begin{align*}
 p =\left(\left(\frac{R(x)-R(x-y)}{2\sin\left(\frac{y}{2}\right)}\right)^{2} + R(x)R(x-y)\right) , \quad q = 1.
\end{align*}

Then

\begin{align*}
 |p-q| \leq \left(\frac{R(x)-R(x-y)}{2\sin\left(\frac{y}{2}\right)}\right)^{2} + |(R(x)-1)R(x-y)| + |R(x-y)-1|,
\end{align*}

and the rest of the factors have upper and lower bounds. Thus, using again Lemma \ref{lemmafolland} with $\sigma > \frac12$:

\begin{align*}
 \|S_2\|_{H^{1-\al}} & \leq C \|\pa^{k-1} R\|_{H^{2-\al}} \left\|\frac{1}{\left(\left(\frac{R(x)-R(x-y)}{2\sin\left(\frac{y}{2}\right)}\right)^{2} + R(x)R(x-y)\right)^{\al/2}}-1\right\|_{L^{\infty}(x,y)} \\
& + C \|\pa^{k-1} R\|_{H^{1}} \left\|\int \frac{\cos(y)}{\left(4\sin^{2}\left(\frac{y}{2}\right)\right)^{\al/2}}\left(\frac{1}{\left(\left(\frac{R(x)-R(x-y)}{2\sin\left(\frac{y}{2}\right)}\right)^{2} + R(x)R(x-y)\right)^{\al/2}}-1\right)\right\|_{H^{1-\al+\sigma}} \\
& \leq C \|\pa^{k-1} R\|_{H^{2-\al}} (\|R'\|_{L^{\infty}}^{2} + (\|R\|_{L^{\infty}}+1)\|R-1\|_{L^{\infty}}) + C_{r} \|\pa^{k-1} R\|_{H^{1}} \\
& = C_r \|\pa^{k-1} R\|_{H^{2-\al}}.
\end{align*}

This finishes the $H^{1-\al}$-boundedness of $S$.

\end{enumerate}

\end{proof}

\begin{lemma}
\label{lemmaGalphamenor1}
Let $g(R)$ be defined as before, and let $R \in H^{k}$. Then $g(R) \in H^{1-\al}$.
\end{lemma}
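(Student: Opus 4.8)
The plan is to show that each of the three pieces in $g(R)=G_1+G_2+G_3$ lies in $H^{1-\al}$ separately. In each piece I would expand $\pa^{k-1}$ by Leibniz and isolate the unique contribution carrying the top-order derivative $\pa^k R$; every remaining contribution contains at most $\pa^{k-1}R\in H^1$ multiplied either by a smooth function of $R$ and its lower derivatives or by an integral operator with kernel in $\mathcal{H}_0$, and hence lies in $H^1\subset H^{1-\al}$ by Young's inequality together with the product estimate of Lemma \ref{lemmafolland}. Thus everything reduces to controlling the single top-order term in each $G_i$.

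The binding term is the one in $G_1$ obtained by letting all $k-1$ derivatives fall on $R'(x-y)$, namely $\int \cos(y)\,\pa^k R(x-y)\,\big((R(x)-R(x-y))^2+4R(x)R(x-y)\sin^2(\frac{y}{2})\big)^{-\al/2}\,dy$. I would split its kernel as in Step 1, writing $((\cdots))^{-\al/2}=K_S(x,y)+\frac{1}{(R(x)^2+R'(x)^2)^{\al/2}}\frac{1}{2^\al|\sin(\frac{x-y}{2})|^\al}$. For the principal part, the factor $(R(x)^2+R'(x)^2)^{-\al/2}$ is a bounded $H^{k-1}$ function, while the remaining convolution of $\pa^k R$ against $\cos(y)|\sin(\frac{y}{2})|^{-\al}$ has Fourier symbol growing like $|n|^{\al-1}$ (this is exactly the growth extracted in Lemma \ref{LemmaICIS}); hence it gains $1-\al$ derivatives and sends $L^2$ into $H^{1-\al}$, and multiplication by the coefficient keeps us in $H^{1-\al}$ by Lemma \ref{lemmafolland}. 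This is the one place where the target regularity is pinned to $H^{1-\al}$ rather than $H^1$. For the remainder $\Phi(x)=\int\cos(y)\,\pa^k R(x-y)\,K_S(x,y)\,dy$, I note that $K_S$ carries one extra power of $|\sin(\frac{x-y}{2})|$ over the principal part, so $\pa_y K_S$ is of size $|y|^{-\al}$ near the diagonal and still belongs to $\mathcal{H}_0$; differentiating $\Phi$ in $x$ and integrating by parts in $y$ to move the extra derivative off $\pa^{k+1}R(x-y)$ yields operators with $\mathcal{H}_0$ kernels acting on $\pa^k R\in L^2$, so $\Phi'\in L^2$ and $\Phi\in H^1\subset H^{1-\al}$. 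Combining the two pieces gives $G_1\in H^{1-\al}$.

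For $G_2$ and $G_3$ the favourable behaviour is already encoded in the definition of $g$: each is a full $\pa^{k-1}$ of a product $R'(x)I(x)$ (respectively $R'(x)J(x)$) minus the single Leibniz term $(\pa^{k-1}R)'(x)I(x)$ in which all derivatives land on the $R'$ factor. This subtraction cancels precisely the contribution $\frac{\pa^k R}{R}\,I$ (respectively its analogue with $J$), which is the only term that is merely $L^2$. What survives carries at most $\pa^{k-1}R$ outside the integrals, or a $\pa^k R(x-y)$ sitting inside the smoothing operators $I,J$, whose kernels vanish to first order at the diagonal and are of size $|y|^{1-\al}$; the former products are in $H^1$ and the latter in $H^{2-\al}$, so $G_2,G_3\in H^1\subset H^{1-\al}$, again via Lemma \ref{lemmafolland} and the $\mathcal{H}_0$ bounds.

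The main obstacle is exactly the top-order term of $G_1$: one must make rigorous, in the presence of the $x$-dependent coefficients, that the operator with kernel of size $|\sin(\frac{x-y}{2})|^{-\al}$ gains $1-\al$ derivatives. I would handle this through the splitting into the purely $y$-dependent multiplier part, controlled by its explicit Fourier symbol and Lemma \ref{lemmafolland}, and the $K_S$ remainder, controlled by the $y$-integration by parts above; the interplay of these two is what fixes the target space as $H^{1-\al}$ and guarantees that each bootstrap iteration gains the full $1-\al$ derivatives.
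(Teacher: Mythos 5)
Your treatment of the term you single out as binding is sound, and it is essentially the paper's mechanism in a slightly different order: the paper first integrates by parts in $y$, turning $\int \cos(y)\,\pa^{k}R(x-y)\,D^{-\al/2}dy$ (where $D=(R(x)-R(x-y))^2+4R(x)R(x-y)\sin^2(\frac{y}{2})$) into the difference $\pa^{k-1}R(x)-\pa^{k-1}R(x-y)$ against a kernel of size $|\sin(\frac{y}{2})|^{-1-\al}$, and then performs the same flat-kernel/remainder splitting you propose, applying $\Lambda^{1-\al}$ instead of quoting the convolution symbol; the two arguments are interchangeable. The genuine gap is in your reduction step: it is false that, once this single term is removed, every remaining contribution is $\pa^{k-1}R$ against an $\mathcal{H}_{0}$ kernel or a smooth coefficient and hence lies in $H^{1}$.

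Concretely, in $G_1$ the Leibniz term in which all $k-1$ derivatives fall on the denominator contains the piece
\begin{align*}
-\al\, C(\al)\int\frac{\bigl(\sin(y)R(x-y)+\cos(y)R'(x-y)\bigr)\bigl(R(x)-R(x-y)\bigr)\bigl(\pa^{k-1}R(x)-\pa^{k-1}R(x-y)\bigr)}{\bigl((R(x)-R(x-y))^2+4R(x)R(x-y)\sin^2\bigl(\frac{y}{2}\bigr)\bigr)^{\frac{\al}{2}+1}}\,dy .
\end{align*}
The numerator of $G_1$'s integrand does \emph{not} vanish at $y=0$ (it tends to $R'(x)$), so the effective kernel acting on the difference $\pa^{k-1}R(x)-\pa^{k-1}R(x-y)$ is of size $|\sin(\frac{y}{2})|^{-1-\al}$: it is not integrable, hence not in $\mathcal{H}_{0}$, Young/Schur does not apply, and the output lies generically in $H^{1-\al}$ and in no better space. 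This is no accident: the paper's integration by parts converts your binding term into a term of exactly this form, so the two have identical strength. The same structure reappears in $G_3$: when the $k-1$ derivatives fall on the bracket, the factor $\bigl(\frac{R(x)-R(x-y)}{2\sin(y/2)}\bigr)^{2}$ produces $\frac{(R(x)-R(x-y))(\pa^{k-1}R(x)-\pa^{k-1}R(x-y))}{4\sin^2(y/2)}$ against $\cos(y)\,|2\sin(\frac{y}{2})|^{-\al}$, again a kernel of size $|\sin(\frac{y}{2})|^{-1-\al}$; this is precisely the term the paper displays as the most singular one of $G_3$ and bounds ``like $G_1$''. Hence your claims that $G_3\in H^{1}$, that its kernels are of size $|y|^{1-\al}$, and that $G_1$'s top-order term is ``the one place'' pinned to $H^{1-\al}$ are all incorrect. (Your description of $G_2$ does survive, because there the numerator $\cos(y)(R(x-y)-R(x))+\sin(y)R'(x-y)$ vanishes to second order at $y=0$.) The lemma itself is not endangered: these extra terms yield to the very machinery you built for the binding term --- split off the flat kernel $c(x)\,|2\sin(\frac{y}{2})|^{-1-\al}$ acting on differences, whose multiplier grows like $|n|^{\al}$ (Lemma \ref{LemmaICIS} with exponent $1+\al$), so that $\pa^{k-1}R\in H^{1}$ is mapped into $H^{1-\al}$, and treat the remainder as you treat $K_S$, with the product step again closed by Lemma \ref{lemmafolland} --- but as written your proposal dismisses them with an argument that does not cover them.
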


\begin{proof}
 \begin{enumerate}
  \item To bound $G_1$ in $H^{1-\al}$, we first notice that the part of $G_1$ that contains the factor $\sin(y)R(x-y)$ is of lower order term than the one with $\cos(y)R'(x-y)$. Hence, we will focus on the latter. We first apply the $k-1$ derivatives and look to the most singular terms. One of them is the following:

\begin{align*}
\left(C(\al)\int \frac{\cos(y)\pa^{k} R(x-y)}{\left((R(x)-R(x-y))^2+4R(x)R(x-y)\sin^{2}\left(\frac{y}{2}\right)\right)^{\al/2}}dy\right).
\end{align*}

We now integrate by parts to obtain

\begin{align*}
& \al C(\al)\int \frac{\cos(y)(\pa^{k-1}R(x) - \pa^{k-1} R(x-y))}{\left((R(x)-R(x-y))^2+4R(x)R(x-y)\sin^{2}\left(\frac{y}{2}\right)\right)^{\al/2+1}} \\
& \times \left((R(x)-R(x-y))R'(x-y) + 2R(x)R(x-y)\sin\left(\frac{y}{2}\right)\cos\left(\frac{y}{2}\right)\right)dy + \text{l.o.t}.
\end{align*}

We can now split the kernel

\begin{align*}
 & \frac{\cos(y)\left((R(x)-R(x-y))R'(x-y) + 2R(x)R(x-y)\sin\left(\frac{y}{2}\right)\cos\left(\frac{y}{2}\right)\right)}{\left((R(x)-R(x-y))^2+4R(x)R(x-y)\sin^{2}\left(\frac{y}{2}\right)\right)^{\al/2+1}} \\
& = \frac{1}{\left((R(x))^2 + (R'(x))^2\right)^{\al/2}} \frac{1}{\left|\sin\left(\frac{y}{2}\right)\right|^{\al+1}} + \text{Rem}(x,y),
\end{align*}

where Rem$(x,y) \in \mathcal{H}_{0}$. Plugging the decomposition of the kernel, we need to bound the $H^{1-\al}$ of

\begin{align*}
& \al C(\al)\frac{1}{\left((R(x))^2 + (R'(x))^2\right)^{\al/2}}\int \frac{(\pa^{k-1}R(x) - \pa^{k-1} R(x-y))}{\left|\sin\left(\frac{y}{2}\right)\right|^{\al+1}}dy \\
& + \al C(\al)\int (\pa^{k-1}R(x) - \pa^{k-1} R(x-y))\text{Rem}(x,y)dy \\
& = G_{11} + G_{12}.
\end{align*}

Taking $\Lambda^{1-\al}$, modulo low order commutators, the most singular terms are

\begin{align*}
& \al C(\al)\frac{1}{\left((R(x))^2 + (R'(x))^2\right)^{\al/2}}\int \frac{(\Lambda^{1-\al}\pa^{k-1}R(x) - \Lambda^{1-\al}\pa^{k-1} R(x-y))}{\left|\sin\left(\frac{y}{2}\right)\right|^{\al+1}}dy \\
& + \al C(\al)\int (\Lambda^{1-\al}\pa^{k-1}R(x) - \Lambda^{1-\al}\pa^{k-1} R(x-y))\text{Rem}(x,y)dy, \\
\end{align*}

which can be bounded by $C\|R\|_{H^{k-1+1-\al+\al+1-1}} = C\|R\|_{H^{k}}$ and $C\|R\|_{H^{k-\al}}$ (low order) respectively.

Finally, the term where 1 of the $k-1$ derivatives hits the denominator and the other $k-2$ hit the factor $R'(x)-R'(x-y)$ is treated the same way. This concludes the boundedness of $G_1$.

\item We start calculating the most singular terms of $G_2$. First, if all the derivatives hit the same factor on the numerator:

\begin{align*}
 & C(\al) \frac{R'(x)}{R(x)} \int \frac{\cos(y)(\pa^{k-1} R(x-y)-\pa^{k-1} R(x))+\sin(y)\pa^{k} R(x-y)}{\left((R(x)-R(x-y))^2+4R(x)R(x-y)\sin^{2}\left(\frac{y}{2}\right)\right)^{\al/2}}dy \\
& = G_{21} + G_{22}.
\end{align*}

$G_{21}$ is less singular than $G_{11}$ and it is estimated in the same way. To deal with $G_{22}$, we integrate by parts and get a kernel that is of the same order as $G_{21}$, therefore repeating the same procedure as with $G_{11}$. The final factor is the one we get if we hit the denominator with one derivative, and the $k-2$ remaining ones hit the factor $R'(x) - R'(x-y)$, but again this is treated as $G_{21}$.

\item We compute the most singular terms of $G_{3}$, obtaining

\begin{align*}
&\left(\frac{\al}{2}\right) R'(x) C(\al) \int \frac{\cos(y)}{\left(4\sin^2\left(\frac{y}{2}\right)\right)^{\al/2}}\frac{1}{\left(\left(\frac{R(x)-R(x-y)}{2\sin\left(\frac{y}{2}\right)}\right)^{2} + R(x)R(x-y)\right)^{\al/2+1}} \\
&\times \left(\frac{(R(x)-R(x-y))(\pa^{k-1}R(x) - \pa^{k-1}R(x-y))}{4\sin^2\left(\frac{y}{2}\right)}\right)dy,  \\
\end{align*}

which is easily bounded like $G_{1}$.

 \end{enumerate}

\end{proof}

Combining the two previous lemmas, we obtain the following corollary:

\begin{corollary}
 Let $R \in H^{k}$. Then $R \in H^{k+(1-\al)}$.
\label{bootstrapalphamenor1}
\end{corollary}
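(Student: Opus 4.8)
The plan is to read the two preceding lemmas as the two halves of a single perturbative-invertibility argument and combine them. After taking $k-1$ derivatives the equation has been put in the form $(LI+S)(\pa^{k-1}R)=g(R)$. Lemma \ref{lemmaISalphamenor1} supplies the operator-theoretic input: $LI\colon H^{2-\al}\to H^{1-\al}$ is a bounded linear isomorphism (it is essentially $(\Omega+c)\pa_x$ with $c=C(\al)\int\cos(y)(4\sin^2(y/2))^{-\al/2}dy$, and the explicit computation there shows the multiplier $\Omega_m+c=2^{\al-1}\frac{\Gamma(1-\al)}{(\Gamma(1-\al/2))^2}\frac{\Gamma(m+\al/2)}{\Gamma(1+m-\al/2)}$ never vanishes), while $S$ is bounded between the same spaces with operator norm at most $C_r$, where $C_r\to0$ as $r\to0$ and, crucially, $C_r$ is independent of $k$. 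Lemma \ref{lemmaGalphamenor1} supplies the data input: whenever $R\in H^{k}$ the right-hand side $g(R)$ lies in $H^{1-\al}$.

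First I would establish that $LI+S$ is invertible as a map $H^{2-\al}\to H^{1-\al}$. Factoring $LI+S=LI\,(I+LI^{-1}S)$, I use that $LI^{-1}\colon H^{1-\al}\to H^{2-\al}$ is bounded with a norm controlled purely by the distance of the multiplier $\Omega+c$ from zero on the admissible neighborhood of $\Omega_m$, hence a fixed constant depending only on $\al$ and $m$ and in particular independent of $k$. Since $\|S\|_{H^{2-\al}\to H^{1-\al}}\le C_r$, choosing $r$ small enough forces $\|LI^{-1}S\|_{H^{2-\al}\to H^{2-\al}}<1$, so the Neumann series converges and $I+LI^{-1}S$ is invertible on $H^{2-\al}$. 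Consequently $(LI+S)^{-1}=(I+LI^{-1}S)^{-1}LI^{-1}$ is a bounded operator $H^{1-\al}\to H^{2-\al}$.

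With invertibility in hand, I would apply $(LI+S)^{-1}$ to both sides. From $R\in H^{k}$ and Lemma \ref{lemmaGalphamenor1} we have $g(R)\in H^{1-\al}$, whence $\pa^{k-1}R=(LI+S)^{-1}g(R)\in H^{2-\al}$. Unwinding the $k-1$ derivatives, $\pa^{k-1}R\in H^{2-\al}$ is exactly the statement $R\in H^{(k-1)+(2-\al)}=H^{k+(1-\al)}$, which is the claim.

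The one delicate point, and the reason the two lemmas are stated with the emphasis they carry, is the $k$-independence of both $C_r$ and $\|LI^{-1}\|$. If either constant were allowed to grow with $k$, the radius $r$ at which the Neumann series closes would shrink to $0$ as $k\to\infty$, so the ball $B_r$ on which invertibility holds would collapse and the bootstrap would stall. Keeping $r$ fixed across all $k$ is precisely what makes the gain of $1-\al$ derivatives repeatable, so that iterating this corollary carries $R$ from $H^{k}$ to $H^{k+(1-\al)}$ to $H^{k+2(1-\al)}$ and so on, up to $C^{\infty}$. This uniformity is the heart of why the scheme closes, and verifying it is the only genuine obstacle; the rest is the mechanical combination described above.
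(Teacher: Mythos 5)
Your proposal is correct and follows essentially the same route as the paper: the paper's proof likewise combines Lemma \ref{lemmaGalphamenor1} (giving $g(R)\in H^{1-\al}$) with Lemma \ref{lemmaISalphamenor1} (invertibility of $LI+S$ from $H^{1-\al}$ to $H^{2-\al}$), and concludes $\pa^{k-1}R=(LI+S)^{-1}g(R)\in H^{2-\al}$, i.e. $R\in H^{k+1-\al}$. The only difference is presentational: you spell out the Neumann-series factorization $LI+S=LI(I+LI^{-1}S)$ and the $k$-independence of the constants, which the paper leaves implicit in the statement of Lemma \ref{lemmaISalphamenor1} and in its outline of the bootstrap strategy.
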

\begin{proof}
 By Lemma \ref{lemmaGalphamenor1}, $g(R) \in H^{k-\al}$, and by Lemma \ref{lemmaISalphamenor1}, $LI+S$ is invertible. $(LI+S)^{-1}$ maps $H^{1-\al}$ into $H^{2-\al}$. Thus

\begin{align*}
 \pa^{k-1} R = \underbrace{(LI+S)^{-1}\underbrace{g(R)}_{\in H^{1-\al}}}_{\in H^{2-\al}} \in H^{2-\al},
\end{align*}

which implies $R \in H^{k+1-\al}$.

\end{proof}

\subsection{The case $1 < \al < 2$}

We will choose the following $LI$, $S$ and $g(R)$. It is immediate to check that they satisfy equation \eqref{funcionalvstates} after taking $k-1$ derivatives:

 \begin{align*}
  LI(\pa^{k-1} R)(x) & = -C(\al) \int \frac{(\pa^{k-1} R)'(x-y)-(\pa^{k-1} R)'(x)}{\left(4\sin^2\left(\frac{y}{2}\right)\right)^{\al/2}}dy \\
 S(\pa^{k-1} R)(x) & = -C(\al) \int \frac{((\pa^{k-1} R)'(x-y)-(\pa^{k-1} R)'(x))}{\left(4\sin^2\left(\frac{y}{2}\right)\right)^{\al/2}}\left[\frac{1}{\left(\left(R'(x)\right)^{2} + (R(x))^2\right)^{\al/2}}-1\right]dy \\
 g(R)(x) & = -\Omega \pa^{k} R(x) \\
 & + \pa^{k-1} \left(C(\al)\frac{R'(x)}{R(x)} \int \frac{\cos(y)(R(x-y)-R(x))+\sin(y)R'(x-y)}{\left((R(x)-R(x-y))^2+4R(x)R(x-y)\sin^{2}\left(\frac{y}{2}\right)\right)^{\al/2}}dy\right) \\
 & + \pa^{k-1} \left(C(\al)\int \frac{\sin(y)R(x-y)}{\left((R(x)-R(x-y))^2+4R(x)R(x-y)\sin^{2}\left(\frac{y}{2}\right)\right)^{\al/2}}dy\right) \\
& + \left(\pa^{k-1} \left(C(\al) \int \frac{\cos(y)(R'(x-y)-R'(x))}{\left(4\sin^2\left(\frac{y}{2}\right)\right)^{\al/2}}\left[\frac{1}{\left(\left(\frac{R(x)-R(x-y)}{2\sin\left(\frac{y}{2}\right)}\right)^{2} + R(x)R(x-y)\right)^{\al/2}}\right]dy\right)\right. \\
& \left.- C(\al) \int \frac{\cos(y)((\pa^{k-1} R)'(x-y)-(\pa^{k-1} R)'(x))}{\left(4\sin^2\left(\frac{y}{2}\right)\right)^{\al/2}}\left[\frac{1}{\left(\left(\frac{R(x)-R(x-y)}{2\sin\left(\frac{y}{2}\right)}\right)^{2} + R(x)R(x-y)\right)^{\al/2}}\right]dy\right. \\
& \left. + C(\al) \int \frac{((\pa^{k-1} R)'(x-y)-(\pa^{k-1} R)'(x))}{\left(4\sin^2\left(\frac{y}{2}\right)\right)^{\al/2}}\right. \\
& \left.\times \left[\frac{\cos(y)}{\left(\left(\frac{R(x)-R(x-y)}{2\sin\left(\frac{y}{2}\right)}\right)^{2} + R(x)R(x-y)\right)^{\al/2}}-\frac{1}{((R'(x))^2 + (R(x))^2)^{\al/2}}\right]dy\right)\\
& = G_1(R) + G_2(R) + G_3(R) + G_4(R) + G_5(R)
 \end{align*}

\begin{lemma}
\label{lemmaISalphamayor1}
$LI$ and $S$ satisfy the following properties:

\begin{enumerate}
 \item $LI$ is linear and invertible, and maps $H^{2\al-1}$ into $H^{\al-1}$.
\item $\|S(\pa^{k-1} R_r)(x)\|_{H^{\al-1}} \leq C_r\|\pa^{k-1} R_r\|_{H^{2\al-1}}$, where $C_r  \rightarrow 0$ when $r \to 0$ for every $R_r \in B_r$ and $C_r$ is independent of $k$.
\end{enumerate}

\end{lemma}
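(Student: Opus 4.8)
The plan is to treat the two assertions separately, relying on the fact that $LI$ is a Fourier multiplier of order $\al$ and that $S$ is nothing but $LI$ composed with multiplication by a function that is small near the disk.

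For the first item, linearity is immediate. To see invertibility I would compute the symbol of $LI$ explicitly. Writing $w=\pa^{k-1}R$ and testing on a single mode, the identity in Lemma \ref{LemmaICIS} gives, for $\al\neq 1$,
\begin{align*}
LI(\cos(nx)) = -n\,C(\al)\,\frac{2\pi\,\Gamma(1-\al)}{\Gamma\!\left(\frac{\al}{2}\right)\Gamma\!\left(1-\frac{\al}{2}\right)}\left(\frac{\Gamma\!\left(\frac{\al}{2}\right)}{\Gamma\!\left(1-\frac{\al}{2}\right)}-\frac{\Gamma\!\left(n+\frac{\al}{2}\right)}{\Gamma\!\left(1+n-\frac{\al}{2}\right)}\right)\sin(nx),
\end{align*}
and the analogous formula with $\cos$ and $\sin$ interchanged when testing on $\sin(nx)$ (via $IC_n$ instead of $IS_n$). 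Denote this multiplier by $m(n)$. Since $\frac{\Gamma(n+\al/2)}{\Gamma(1+n-\al/2)}$ is strictly monotone in $n$ by Proposition \ref{Omegammonotonic} and coincides with $\frac{\Gamma(\al/2)}{\Gamma(1-\al/2)}$ only at $n=0$, the bracket never vanishes for $n\geq 1$, so $m(n)\neq 0$ for every mode. Moreover the asymptotics of the Gamma function (as in Lemma \ref{lemmagrowthomega}) give $\frac{\Gamma(n+\al/2)}{\Gamma(1+n-\al/2)}\sim n^{\al-1}$, whence $m(n)\sim c\,n^{\al}$ with $c\neq 0$. Consequently $|m(n)|\leq C n^{\al}$ shows that $LI\colon H^{2\al-1}\to H^{\al-1}$ is bounded, while $|m(n)|\geq c\,n^{\al}$ for large $n$ together with $m(n)\neq 0$ shows that the inverse multiplier $1/m(n)$ defines a bounded operator $H^{\al-1}\to H^{2\al-1}$ on the (mean–zero) range; both bounds are independent of $k$.

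For the second item, the key observation is that the bracket in the definition of $S$ depends only on $x$, so it factors out of the integral and
\begin{align*}
S(w)(x)=\phi(x)\,LI(w)(x),\qquad \phi(x)=\frac{1}{\left((R'(x))^2+(R(x))^2\right)^{\al/2}}-1.
\end{align*}
Since $\al-1\in(0,1)$, I would apply the product estimate of Lemma \ref{lemmafolland} with $s=\al-1$ and some $\sigma>\tfrac12$ to get
\begin{align*}
\|S(w)\|_{H^{\al-1}}\leq C\left(\|\phi\|_{L^{\infty}}\,\|LI(w)\|_{H^{\al-1}}+\|\phi\|_{H^{\al-1+\sigma}}\,\|LI(w)\|_{L^{2}}\right).
\end{align*}
By the first item $\|LI(w)\|_{H^{\al-1}}\leq C\|w\|_{H^{2\al-1}}$, and since $m(n)\sim n^{\al}$ we also have $\|LI(w)\|_{L^2}\leq C\|w\|_{H^{\al}}\leq C\|w\|_{H^{2\al-1}}$, using $\al\leq 2\al-1$. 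Finally $\phi$ is a smooth function of $R$ and $R'$ that vanishes at $R\equiv 1$, so both $\|\phi\|_{L^{\infty}}$ and $\|\phi\|_{H^{\al-1+\sigma}}$ tend to $0$ as $r\to 0$; note that these norms only involve $R$ up to a fixed regularity below $k$, since $\al+\sigma<3\leq k$. Combining these estimates yields $\|S(w)\|_{H^{\al-1}}\leq C_r\|w\|_{H^{2\al-1}}$ with $C_r\to 0$.

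The main obstacle is precisely the requirement that $C_r$ be independent of $k$, which is what makes the bootstrap work. This is resolved exactly by the factorization $S(w)=\phi\,LI(w)$: all of the high-order content lives in $LI(w)$ and is absorbed into the norm $\|w\|_{H^{2\al-1}}$ through the $k$-independent mapping bound of the first item, while the small coefficient $\phi$ depends only on $R$ and $R'$ at a fixed, low level of regularity. Hence the smallness factor $C_r$ stems solely from $\phi$ and carries no dependence on $k$.
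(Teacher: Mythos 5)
Your proposal is correct and follows essentially the same route as the paper: the paper also establishes item 1 by appealing to the Fourier multiplier computation of Section 2 (non-vanishing symbol of order $n^{\al}$, hence bounded from $H^{2\al-1}$ to $H^{\al-1}$ with bounded inverse), and item 2 by pulling the $x$-dependent bracket out of the integral and applying the product estimate of Lemma \ref{lemmafolland} with $s=\al-1$, $\sigma>\frac12$, obtaining smallness from the $L^{\infty}$ and $H^{\al-1+\sigma}$ norms of the bracket. Your explicit factorization $S(w)=\phi\, LI(w)$ and the closing remark on why $C_r$ is $k$-independent merely make explicit what the paper leaves implicit.
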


\begin{proof}
\begin{enumerate}
 \item The linearity of $LI$ is trivial. We saw in section \ref{sectionexistencealpha1} that $LI$ is invertible since the multiplier in Fourier space does not vanish and moreover it maps $H^{2\al-1}$ into $H^{\al-1}$ by definition of the $H^{s}$-norm.
\item  We note that  we can bound, for $\sigma > \frac12$:

\begin{align*}
 \left\|\frac{1}{\left(\left(R'(x)\right)^{2} + (R(x))^{2}\right)^{\al/2}}-1\right\|_{L^{\infty}} & \leq C\left\|(R'(x))^{2} + (R(x))^{2} - 1\right\|_{L^{\infty}}
\leq C \left\|1-R\right\|_{L^{\infty}} + C\|R'\|_{L^{\infty}} = C_r. \\
 \left\|\frac{1}{\left(\left(R'(x)\right)^{2} + (R(x))^{2}\right)^{\al/2}}-1\right\|_{H^{\al-1+\sigma}} & \leq C_{r}. \\
\end{align*}

Then, $S$ can be bounded in the $H^{\al-1}$-norm (for $\sigma > \frac12$) by means of Lemma \ref{lemmafolland} by the sum of the following two terms:

\begin{align*}
 C \left\|\int \frac{(\pa^{k} R(x-y)- \pa^{k} R(x))}{\left(4\sin^2\left(\frac{y}{2}\right)\right)^{\al/2}}dy\right\|_{H^{\al-1}}\left\|\frac{1}{\left(\left(R'(x)\right)^{2} + (R(x))^{2}\right)^{\al/2}}-1\right\|_{L^{\infty}} \leq  C_r \|\pa^{k-1} R\|_{H^{2\al-1}} \\
\end{align*}

and 
\begin{align*}
 C \left\|\int \frac{(\pa^{k} R(x-y)- \pa^{k} R(x))}{\left(4\sin^2\left(\frac{y}{2}\right)\right)^{\al/2}}dy\right\|_{L^{2}}\left\|\frac{1}{\left(\left(R'(x)\right)^{2} + (R(x))^{2}\right)^{\al/2}}\right\|_{H^{\al-1+\sigma}} \leq  C_r \|\pa^{k-1} R\|_{H^{\al}},
\end{align*}

therefore getting the bound

\begin{align*}
 \|S\|_{H^{\al-1}} \leq C_{r} \|\pa^{k-1} R\|_{H^{2\al-1}}
\end{align*}

\end{enumerate}

\end{proof}

\begin{lemma}
\label{lemmaGalphamayor1}
Let $g(R)$ be defined as before, and let $R \in H^{k+\al-1}$. Then $g(R) \in H^{\al-1}$.
\end{lemma}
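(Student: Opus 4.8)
The plan is to show that each of the five pieces $G_1(R),\dots,G_5(R)$ belongs to $H^{\al-1}$. Recall from the remark following Proposition \ref{prop2} (an application of Lemma \ref{LemmaICIS}) that $R\in H^{k+\al-1}$ is equivalent to $\pa^{k-1}R\in H^{\al}$, and hence also to $\pa^{k}R\in H^{\al-1}$. Since $1<\al<2$ gives $\al-1<1$ and therefore $H^{1}\subset H^{\al-1}$, it will be enough to check that $G_1(R)\in H^{\al-1}$ directly and that each of the remaining four pieces is in fact one full derivative smoother, landing in $H^{1}$. The conceptual reason this is possible is that $LI+S$ has, by construction, already absorbed the single most singular contribution of $\pa^{k-1}F_2$ --- namely the operator of order $\al$ acting on $\pa^{k-1}R$ with symbol governed by $C(\al)\left((R'(x))^2+(R(x))^2\right)^{-\al/2}$ --- so that $G_1$ is the only term left in $g(R)$ that saturates the target regularity.

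First I would dispose of the easy pieces. The term $G_1(R)=-\Omega\,\pa^{k}R$ lies in $H^{\al-1}$ at once because $\pa^{k}R\in H^{\al-1}$; this is precisely the term that forces the $(\al-1)$-gain in the subsequent bootstrap. For $G_3(R)$, the relevant kernel $\sin(y)/D^{\al/2}$ behaves near the diagonal like $\left((R')^2+R^2\right)^{-\al/2}\mathrm{sgn}(\sin(y/2))\,|\sin(y/2)|^{1-\al}$; this is an odd, \emph{integrable} kernel, hence defines a smoothing operator of order $\al-2<0$. Applying $\pa^{k-1}$ and keeping the leading term, where all derivatives fall on $R(x-y)$, gives the action of this order-$(\al-2)$ operator on $\pa^{k-1}R\in H^{\al}$, which lands in $H^{2}\subset H^{1}$; the contributions where derivatives hit the denominator are strictly less singular and treated identically. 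For $G_2(R)$, which gathers the $F_3$ contribution (carrying $\cos(y)(R(x-y)-R(x))$) together with the $\sin(y)R'(x-y)$ term, both resulting operators are of order at most $\al-1$ on $\pa^{k-1}R$: the first because the single difference $\pa^{k-1}R(x-y)-\pa^{k-1}R(x)$ is paired against the even kernel $\left((R')^2+R^2\right)^{-\al/2}(4\sin^2(y/2))^{-\al/2}$ plus an $\mathcal{H}_0$ remainder, the second because the odd $\sin(y)$ factor again makes the kernel smoothing. Thus both lie in $H^{1}$, and the smooth prefactor $R'/R$ is absorbed using Lemma \ref{lemmafolland}.

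The core of the argument is $G_4(R)$ and $G_5(R)$, which together equal $\pa^{k-1}F_2(R)-(LI+S)(\pa^{k-1}R)$ by construction: $G_4$ (the first two lines of the last bracket) collects the commutator terms of $\pa^{k-1}F_2$, in which at least one derivative falls on the kernel $\cos(y)D^{-\al/2}$, while $G_5$ (the last two lines) is the kernel-regularization carrying the bracket $\cos(y)[\,\cdot\,]^{-\al/2}-\left((R')^2+R^2\right)^{-\al/2}$, which vanishes as $y\to0$. Since $LI+S$ reproduces the order-$\al$ operator, both remainders are one order lower, of order $\al-1$ on $\pa^{k-1}R$, and should therefore map $H^{\al}$ into $H^{1}\subset H^{\al-1}$. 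The genuinely new difficulty compared with the case $\al<1$ is that for $\al>1$ one cannot estimate the Leibniz pieces of $G_4$ separately: differentiating $D^{-\al/2}$ once produces a kernel whose odd leading part is $\sim|\sin(y/2)|^{-(\al+1)}$, which is not integrable against a single difference of $\pa^{k-1}R$. The remedy is to keep the commutator grouped and integrate by parts in $y$, using $\pa_y[\pa^{k-1}R(x-y)]=-\pa^{k}R(x-y)$ to trade the $\pa^{k}R$-differences of the top term against the $\pa^{k-1}R$-differences produced by differentiating the denominator; the non-integrable odd contributions then cancel, leaving an integrable principal kernel $\left((R')^2+R^2\right)^{-\al/2}|\sin(y/2)|^{-\al}$ acting on a single difference, an operator of order $\al-1$, plus a remainder in $\mathcal{H}_0$.

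With $G_4$ in this form I would apply $\Lambda$, bound the resulting most singular term in $L^{2}$ by $C\|R\|_{H^{k+\al-1}}$, control the $\mathcal{H}_0$ remainder by Young's inequality, and absorb the smooth variable coefficients through Lemma \ref{lemmafolland}, placing $G_4$ in $H^{1}$. For $G_5$ the bracket vanishes linearly at the diagonal, which lowers the order by a further unit, so that, acting on the difference $\pa^{k}R(x-y)-\pa^{k}R(x)$ of $\pa^{k}R\in H^{\al-1}$, it lands comfortably in $H^{1}$ as well. I expect the principal obstacle to be exactly this commutator bookkeeping in $G_4$: one must verify that the non-integrable pieces really cancel after integration by parts, that the net order is $\al-1$ rather than $\al$, that the remainder kernels lie in $\mathcal{H}_0$ uniformly with constants independent of $k$, and that Lemma \ref{lemmafolland} introduces no $k$-dependence. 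Once $G_2,\dots,G_5$ are placed in $H^{1}\subset H^{\al-1}$ and $G_1$ in $H^{\al-1}$, summing yields $g(R)\in H^{\al-1}$, as claimed.
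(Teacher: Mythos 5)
Your overall skeleton (term-by-term estimates, splitting kernels into a diagonal leading part plus an $\mathcal{H}_0$-type remainder, Lemma \ref{lemmafolland} for the variable coefficients) is the same as the paper's, and your treatment of $G_1$ and $G_5$ is consistent with it. But two of your key assertions are wrong. First, the claim that $G_2(R)\in H^1$ is false. Here $G_2=\pa^{k-1}\left(C(\al)\frac{R'(x)}{R(x)}\int\cdots\,dy\right)$, and $R'/R$ is \emph{not} a smooth prefactor that can simply be "absorbed": it has only the regularity of $R'$, so the Leibniz rule produces the term in which all $k-1$ derivatives fall on it, namely
\begin{align*}
C(\al)\,\frac{\pa^{k}R(x)}{R(x)}\int \frac{\cos(y)\left(R(x-y)-R(x)\right)+\sin(y)R'(x-y)}{\left(\left(R(x)-R(x-y)\right)^2+4R(x)R(x-y)\sin^{2}\left(\frac{y}{2}\right)\right)^{\al/2}}\,dy
\end{align*}
(the paper's $G_{21}$). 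Since $\pa^{k}R$ lies only in $H^{\al-1}$ and $\al-1<1$, this term belongs to $H^{\al-1}$ and cannot be placed in $H^{1}$. So your organizing principle --- "only $G_1$ saturates the target space, the other four gain a full derivative" --- fails for $G_2$. The repair is harmless (bound $G_{21}$ in $H^{\al-1}$ via Lemma \ref{lemmafolland}, exactly as the paper does), but as written your proof omits one of the main terms.

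Second, the part you call the core of the argument rests on a miscomputation of kernel orders. You assert that differentiating $D^{-\al/2}$ once in $x$ produces a kernel of order $\al+1$, non-integrable against a single difference of $\pa^{k-1}R$, and you propose an integration by parts in $y$ to cancel these contributions. In fact $\pa_x D = 2\left(R(x)-R(x-y)\right)\left(R'(x)-R'(x-y)\right)+4\left(R'(x)R(x-y)+R(x)R'(x-y)\right)\sin^2\left(\frac{y}{2}\right)$ vanishes to \emph{second} order on the diagonal, so $\pa_x D^{-\al/2}=O\left(|\sin(y/2)|^{-\al}\right)$: the order stays $\al$, it does not jump to $\al+1$. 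Moreover, in the paper's definition of $G_4$ the singular factor $\left(4\sin^2\left(\frac{y}{2}\right)\right)^{-\al/2}$ does not depend on $x$ at all, so $x$-derivatives only ever hit the bracket $\left[\left(\left(\frac{R(x)-R(x-y)}{2\sin\left(\frac{y}{2}\right)}\right)^{2}+R(x)R(x-y)\right)^{-\al/2}\right]$, whose $x$-derivatives are bounded. Consequently every Leibniz piece of $G_4$ is an operator of order at most $\al-1$ acting on a difference of (at worst) $\pa^{k-1}R\in H^{\al}$, and the pieces can be estimated separately --- which is precisely what the paper does, reducing them to $G_{221}$-type bounds. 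There are no non-integrable contributions to cancel, and your integration-by-parts scheme both solves a nonexistent problem and leaves the alleged cancellation unverified. (The same misconception appears, harmlessly and in the opposite direction, in your claim that the denominator terms of $G_3$ are "strictly less singular": they are in fact order $\al-1$ operators, more singular than the order $\al-2$ leading term, though still landing in $H^1$.)
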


\begin{proof}
 We will go term by term over the $G_i$.
\begin{enumerate}
 \item $G_1$ is trivial and the $H^{\al-1}$-norm is clearly bounded by a constant times the $H^{k+\al-1}$-norm of $R$.
\item We apply the $k-1$ derivatives and look for the most singular terms. These are:

\begin{align*}
& C(\al)\frac{\pa^{k} R(x)}{R(x)} \int \frac{\cos(y)(R(x-y)-R(x))+\sin(y)R'(x-y)}{\left((R(x)-R(x-y))^2+4R(x)R(x-y)\sin^{2}\left(\frac{y}{2}\right)\right)^{\al/2}}dy \\
& + C(\al)\frac{R'(x)}{R(x)} \int \frac{\cos(y)(\pa^{k-1} R(x-y)-\pa^{k-1} R(x))+\sin(y)\pa^{k} R(x-y)}{\left((R(x)-R(x-y))^2+4R(x)R(x-y)\sin^{2}\left(\frac{y}{2}\right)\right)^{\al/2}}dy \\
& + C(\al)\left(-\frac{\al}{2}\right)\frac{R'(x)}{R(x)} \int \frac{\cos(y)(R(x-y)-R(x))+\sin(y)R'(x-y)}{\left((R(x)-R(x-y))^2+4R(x)R(x-y)\sin^{2}\left(\frac{y}{2}\right)\right)^{\al/2+1}} \\
& \times \left(2(R(x)-R(x-y))(\pa^{k-1}R(x) - \pa^{k-1}R(x-y))+4(\pa^{k-1}R(x)R(x-y) + R(x)\pa^{k-1}R(x-y))\sin^{2}\left(\frac{y}{2}\right)\right) dy \\
& = G_{21} + G_{22} + G_{23}
\end{align*}

The $H^{\al-1}$ norm  of $G_{21}$ can be bounded by

\begin{align*}
 \|G_{21}\|_{H^{\al-1}} \leq C\|R\|_{H^{k+\al-1}} + \text{l.o.t},
\end{align*}

We split $G_{22}$ into two:

\begin{align*}
&  C(\al)\frac{R'(x)}{R(x)} \int \frac{\cos(y)(\pa^{k-1} R(x-y)-\pa^{k-1} R(x))}{\left((R(x)-R(x-y))^2+4R(x)R(x-y)\sin^{2}\left(\frac{y}{2}\right)\right)^{\al/2}}dy \\
& + C(\al)\frac{R'(x)}{R(x)} \int \frac{\sin(y)\pa^{k} R(x-y)}{\left((R(x)-R(x-y))^2+4R(x)R(x-y)\sin^{2}\left(\frac{y}{2}\right)\right)^{\al/2}}dy \\
& = G_{221} + G_{222}
\end{align*}

We deal first with $G_{222}$. We take first $\Lambda^{\al-1}$ and estimate its $L^{2}$ norm. Modulo low order commutators, the most singular term will be

\begin{align*}
 C(\al)\frac{R'(x)}{R(x)} \int \frac{\sin(y)\Lambda^{\al-1} \pa^{k} R(x-y)}{\left((R(x)-R(x-y))^2+4R(x)R(x-y)\sin^{2}\left(\frac{y}{2}\right)\right)^{\al/2}}dy,
\end{align*}

which can be bounded in $L^{2}$ by a multiple of $\|R\|_{H^{k+\al-1}}$ since the kernel

\begin{align*}
 K(x,y) = \frac{\sin(y)}{\left((R(x)-R(x-y))^2+4R(x)R(x-y)\sin^{2}\left(\frac{y}{2}\right)\right)^{\al/2}} \in \mathcal{H}_{0}.
\end{align*}

We move on to $G_{221}$. We first decompose the kernel into

\begin{align*}
 \frac{\cos(y)}{\left((R(x)-R(x-y))^2+4R(x)R(x-y)\sin^{2}\left(\frac{y}{2}\right)\right)^{\al/2}}
= \frac{1}{\left(\left((R'(x)\right)^2 + (R(x))^2\right)^{\al/2}} \frac{1}{\left(4 \sin^{2}\left(\frac{y}{2}\right)\right)^{\al/2}} + \text{Rem}(x,y),
\end{align*}

where Rem$(x,y) \in \mathcal{H}_{1-\al}$. Again, modulo commutators and taking $\al-1$ derivatives, we can write the most singular terms as

\begin{align*}
  & C(\al)\frac{R'(x)}{R(x)}\frac{1}{\left(\left((R'(x)\right)^2 + (R(x))^2\right)^{\al/2}}\int \frac{(\Lambda^{\al-1}\pa^{k-1} R(x-y)-\Lambda^{\al-1}\pa^{k-1} R(x))}{\left(4 \sin^{2}\left(\frac{y}{2}\right)\right)^{\al/2}}dy \\
 & + C(\al)\frac{R'(x)}{R(x)} \int (\Lambda^{\al-1}\pa^{k-1} R(x-y)-\Lambda^{\al-1}\pa^{k-1} R(x))\text{Rem}(x,y)dy \\
\end{align*}

Thus

\begin{align*}
 \|G_{221}\|_{H^{\al}} \leq C\|R\|_{H^{k-3+2\al}}  + \text{l.o.t} \leq C\|R\|_{H^{k+\al-1}} + \text{l.o.t}
\end{align*}

The terms that contain a factor of $4(\pa^{k-1}R(x)R(x-y) + R(x)\pa^{k-1}R(x-y))\sin^{2}\left(\frac{y}{2}\right)$ in $G_{23}$ are low order and can be estimated as the previous ones. From what is left, again, modulo low order commutators, the most singular term is

\begin{align*}
 & C(\al)\left(-\frac{\al}{2}\right)\frac{R'(x)}{R(x)} \int \frac{\cos(y)(R(x-y)-R(x))+\sin(y)R'(x-y)}{\left((R(x)-R(x-y))^2+4R(x)R(x-y)\sin^{2}\left(\frac{y}{2}\right)\right)^{\al/2+1}} \\
& \times(2(R(x)-R(x-y))(\Lambda^{\al-1} \pa^{k-1}R(x) - \Lambda^{\al-1}\pa^{k-1}R(x-y))) dy, \\
\end{align*}

but this term can be bounded as $G_{221}$. This finishes $G_{2}$.

%
%
%
%
%
%

\item $G_3$ is less singular than $G_2$ and thus can be bounded in the same way.
\item The most singular terms in $G_4$ are when $k-2$ derivatives hit the factor $(R'(x-y)-R'(x))$ and one hits the denominator, or when one hits the denominator and the remaining $k-2$ hit one of the factors $(R'(x-y)-R'(x))$. Both terms can be estimated as $G_{221}$.
\item $G_5$ can also be estimated as $G_{221}$ using that the kernel

\begin{align*}
\left[\frac{\cos(y)}{\left(\left(\frac{R(x)-R(x-y)}{2\sin\left(\frac{y}{2}\right)}\right)^{2} + R(x)R(x-y)\right)^{\al/2}}-\frac{1}{((R'(x))^2 + (R(x))^2)^{\al/2}}\right]
\end{align*}

belongs to $\mathcal{H}_{1}$.
\end{enumerate}

\end{proof}

Combining the two previous lemmas, we obtain the following corollary:

\begin{corollary}
\label{bootstrapalphamayor1}
 Let $R \in H^{k+\al-1}$. Then $R \in H^{k+2\al-2}$.
\end{corollary}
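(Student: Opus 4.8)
The plan is to combine Lemmas \ref{lemmaISalphamayor1} and \ref{lemmaGalphamayor1} in exactly the way Corollary \ref{bootstrapalphamenor1} was obtained in the range $0<\al<1$; all of the genuine analytic work has been packaged into those two lemmas, so what remains is a short inversion argument. Recall that after differentiating \eqref{funcionalvstates} $k-1$ times and using the decomposition fixed above, $R$ satisfies
\begin{align*}
(LI+S)(\pa^{k-1}R)(x)=g(R)(x).
\end{align*}
First I would record the two mapping properties I need: by Lemma \ref{lemmaGalphamayor1} the hypothesis $R\in H^{k+\al-1}$ yields $g(R)\in H^{\al-1}$, and by part (1) of Lemma \ref{lemmaISalphamayor1} the operator $LI$ is a linear isomorphism $H^{2\al-1}\to H^{\al-1}$ (it is the nonvanishing Fourier multiplier computed in Section \ref{sectionexistencealpha1}), with an inverse whose norm does not depend on $k$.

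Next I would upgrade this to invertibility of the full operator $LI+S$ by a Neumann-series perturbation. Part (2) of Lemma \ref{lemmaISalphamayor1} gives $\|S\|_{H^{2\al-1}\to H^{\al-1}}\le C_r$ with $C_r\to 0$ as $r\to 0$, and --- this is the crucial feature emphasized in the outline of the section --- $C_r$ is independent of $k$. Writing $LI+S=LI(\mathrm{Id}+LI^{-1}S)$, the factor $\mathrm{Id}+LI^{-1}S$ is invertible as soon as $\|LI^{-1}\|\,C_r<1$; since both $\|LI^{-1}\|$ and $C_r$ are independent of $k$, one fixes $r$ small enough once and for all and this holds simultaneously for every $k$. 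Hence $(LI+S)^{-1}$ is bounded from $H^{\al-1}$ to $H^{2\al-1}$.

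Putting the pieces together,
\begin{align*}
\pa^{k-1}R=(LI+S)^{-1}g(R)\in H^{2\al-1},
\end{align*}
which is precisely the statement $R\in H^{(k-1)+(2\al-1)}=H^{k+2\al-2}$, and since $\al>1$ this is a genuine gain of $\al-1$ derivatives, enabling the bootstrap towards $C^{\infty}$. The only genuinely delicate point, and the reason the introductory discussion of the section insists that $C_r$ be bounded independently of $k$, is the $k$-independence of $r$ in the perturbation step: if the radius of $B_r$ on which $LI+S$ is invertible were allowed to depend on $k$, then iterating the corollary and letting $k\to\infty$ would force $B_r$ to collapse to a point. This independence is exactly what part (2) of Lemma \ref{lemmaISalphamayor1} supplies, so no further difficulty arises and the remaining argument is the one-line inversion above.
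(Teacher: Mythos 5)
Your proposal is correct and takes essentially the same route as the paper: apply Lemma \ref{lemmaGalphamayor1} to get $g(R)\in H^{\al-1}$, use Lemma \ref{lemmaISalphamayor1} to invert $LI+S$ as a map from $H^{2\al-1}$ to $H^{\al-1}$, and conclude $\pa^{k-1}R=(LI+S)^{-1}g(R)\in H^{2\al-1}$, i.e.\ $R\in H^{k+2\al-2}$. The only difference is presentational: you spell out the Neumann-series perturbation $LI+S=LI(\mathrm{Id}+LI^{-1}S)$ and the $k$-independence of $C_r$, which the paper leaves implicit in the statement of Lemma \ref{lemmaISalphamayor1} and in the introductory discussion of the section.
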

\begin{proof}
 By Lemma \ref{lemmaGalphamayor1}, $g(R) \in H^{\al-1}$, and by Lemma \ref{lemmaISalphamayor1}, $LI+S$ is invertible. $(LI+S)^{-1}$ maps $H^{\al-1}$ into $H^{2\al-1}$. Thus

\begin{align*}
 \pa^{k-1} R = \underbrace{(LI+S)^{-1}\underbrace{g(R)}_{\in H^{\al-1}}}_{\in H^{2\al-1}} \in H^{2\al-1},
\end{align*}

which implies $R \in H^{k+2\al-2}$.

\end{proof}

\subsection{The case $\al = 1$}

We will choose the following $LI$, $S$ and $g(R)$. It is immediate to check that they satisfy equation \eqref{funcionalvstates} after taking $k-1$ derivatives:

\begin{align*}
 LI(\pa^{k-1} R)(x) & = \Omega (\pa^{k-1} R)'(x) - \frac{1}{2\pi} \int \frac{\cos(y)((\pa^{k-1} R)'(x-y)-(\pa^{k-1} R)'(x)}{\left(4\sin^2\left(\frac{y}{2}\right)\right)^{1/2}}dy \\
 S(\pa^{k-1} R)(x) & = -\frac{1}{2\pi}\frac{(\pa^{k-1} R)'(x)}{R(x)} \int \frac{\cos(y)(R(x)-R(x-y)) + \sin(y)R'(x-y)}{\left((R(x)-R(x-y))^2+4R(x)R(x-y)\sin^{2}\left(\frac{y}{2}\right)\right)^{1/2}}dy \\
& -\frac{1}{2\pi} \int \frac{\cos(y)((\pa^{k-1} R)'(x-y)-(\pa^{k-1} R)'(x))}{\left(4\sin^2\left(\frac{y}{2}\right)\right)^{1/2}}\left[\frac{1}{\left(\left(\frac{R(x)-R(x-y)}{2\sin\left(\frac{y}{2}\right)}\right)^{2} + R(x)R(x-y)\right)^{1/2}}-1\right]dy \\
& = S_1 + S_2 \\
g(R)(x) & = \pa^{k-1}\left( \frac{1}{2\pi}\int \frac{\sin(y)R(x-y)}{\left((R(x)-R(x-y))^2+4R(x)R(x-y)\sin^{2}\left(\frac{y}{2}\right)\right)^{1/2}}dy\right) \\
& + \left(\pa^{k-1}\left(\frac{1}{2\pi}\frac{R'(x)}{R(x)} \int \frac{\cos(y)(R(x)-R(x-y)) + \sin(y)R'(x-y)}{\left((R(x)-R(x-y))^2+4R(x)R(x-y)\sin^{2}\left(\frac{y}{2}\right)\right)^{1/2}}dy\right)\right. \\
& \left. - \frac{1}{2\pi}\frac{(\pa^{k-1} R)'(x)}{R(x)} \int \frac{\cos(y)(R(x)-R(x-y)) + \sin(y)R'(x-y)}{\left((R(x)-R(x-y))^2+4R(x)R(x-y)\sin^{2}\left(\frac{y}{2}\right)\right)^{1/2}}dy \right) \\
 & + \left(\pa^{k-1} \left(\frac{1}{2\pi} \int \frac{\cos(y)(R'(x-y)-R'(x))}{\left(4\sin^2\left(\frac{y}{2}\right)\right)^{1/2}}\left[\frac{1}{\left(\left(\frac{R(x)-R(x-y)}{2\sin\left(\frac{y}{2}\right)}\right)^{2} + R(x)R(x-y)\right)^{1/2}}\right]dy\right)\right. \\
 & \left.- \frac{1}{2\pi} \int \frac{\cos(y)((\pa^{k-1} R)'(x-y)-(\pa^{k-1} R)'(x))}{\left(4\sin^2\left(\frac{y}{2}\right)\right)^{1/2}}\left[\frac{1}{\left(\left(\frac{R(x)-R(x-y)}{2\sin\left(\frac{y}{2}\right)}\right)^{2} + R(x)R(x-y)\right)^{1/2}}\right]dy\right. \\
& = G_1 + G_2 + G_3
\end{align*}

\begin{lemma}
\label{lemmaISalpha1}

$LI$ and $S$ satisfy the following properties:

\begin{enumerate}
 \item For every $\Omega$, $LI$ is linear, and maps $X^{2+\log}$ into $H^{1}$. $LI(\Omega_{m})$ is not invertible, and has a one-dimensional kernel spanned by $\cos(mx)$.
\item $\|S(\pa^{k-1} R_r)(x)\|_{H^{1}} \leq C_r\|\pa^{k-1} R_r\|_{X^{2+\log}}$, where $C_r  \rightarrow 0$ when $r \to 0$ for every $R_r \in B_r$ and $C_r$ is independent of $k$.
\end{enumerate}
\end{lemma}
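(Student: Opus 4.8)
The plan is to treat $LI$ as a Fourier multiplier and $S$ as a small, lower-order perturbation of it, following exactly the pattern of Lemmas \ref{lemmaISalphamenor1} and \ref{lemmaISalphamayor1}; the only genuinely new feature is the borderline \emph{logarithmic} scaling of the symbol. For part (1), linearity is immediate, so I would begin by computing the action of $LI$ on a single mode $\cos(kx)$. Since $(\cos(kx))'=-k\sin(kx)$, writing $\cos(y)=1-2\sin^{2}(y/2)$ splits the singular integral into the quantity $IS_{k}$ of Lemma \ref{LemmaICIS} (with $\al=1$) together with two elementary integrals of $\sin(y/2)$ against $\cos(ky)$ and $\sin(ky)$, all of which are explicit. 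This produces a diagonal action
\begin{align*}
LI(\cos(kx)) = -k\big(\Omega-\Omega_{k}\big)\sin(kx) + (\text{bounded remainder}),
\end{align*}
where $\Omega_{k}$ is the dispersion sequence of Proposition \ref{representationOperator} and the remainder is the harmless, smoothing contribution of the $-2\sin^{2}(y/2)$ piece of the splitting; thus $LI$ agrees to leading order with the linearized operator computed there. Because $\Omega_{k}\sim\log(k)$ by Lemma \ref{lemmagrowthomega}, the symbol grows like $k\log(k)$. Invoking the alternative characterization of $X^{2+\log}$ in Proposition \ref{alternativexlog} with $k=2$, a function $f=\sum a_{j}\cos(jx)\in X^{2+\log}$ satisfies $\sum|a_{j}|^{2}j^{4}(1+\log j)^{2}<\infty$, and feeding it through the symbol gives $\|LI f\|_{H^{1}}^{2}\sim\sum|a_{j}|^{2}j^{4}(\log j)^{2}<\infty$. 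The scaling matches exactly, so $LI:X^{2+\log}\to H^{1}$.

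Still in part (1), the kernel is read off the leading symbol: at $\Omega=\Omega_{m}$ the factor $-k(\Omega_{m}-\Omega_{k})$ vanishes when $k=m$ by the very definition of $\Omega_{m}$, while the strict monotonicity of $\Omega_{k}$ from Proposition \ref{Omegammonotonic} guarantees it is nonzero for all $k\neq m$. I would therefore conclude that $\ker(LI(\Omega_{m}))$ is one-dimensional, spanned by $\cos(mx)$, so that $LI(\Omega_{m})$ is not invertible. This is the decisive structural difference with the cases $\al\neq 1$, where the corresponding multiplier is bounded away from zero for every real $m$.

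For part (2), I would split $S=S_{1}+S_{2}$ as in the definition and estimate each summand in $H^{1}$ by means of the product inequality in Lemma \ref{lemmafolland}. In $S_{1}$, the integral factor multiplying $(\pa^{k-1}R)'(x)/R(x)$ vanishes identically at $R\equiv 1$ and has a non-singular kernel, so it is $O(\|R-1\|+\|R'\|_{L^{\infty}})$ in $L^{\infty}$ and bounded in $H^{1+\sigma}$ for $\sigma>\tfrac12$; Lemma \ref{lemmafolland} then yields $\|S_{1}\|_{H^{1}}\le C_{r}\|\pa^{k-1}R\|_{H^{2}}\le C_{r}\|\pa^{k-1}R\|_{X^{2+\log}}$. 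In $S_{2}$, I would first extract from the bracket its value as $y\to 0$, namely the $x$-only coefficient $((R'(x))^{2}+(R(x))^{2})^{-1/2}$, leaving a remainder kernel in $\mathcal{H}_{0}$; the resulting leading piece is precisely an $LI$-type log-derivative operator multiplied by a coefficient that is $L^{\infty}$-small and $H^{1+\sigma}$-bounded, so applying Lemma \ref{lemmafolland} once more gives $\|S_{2}\|_{H^{1}}\le C_{r}\|\pa^{k-1}R\|_{X^{2+\log}}$, with the $\mathcal{H}_{0}$ remainder controlled by Young's inequality. Since every small constant depends only on norms of $R-1$ and $R'$, they are independent of $k$ and tend to $0$ as $r\to 0$.

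The hard part will be the logarithmic borderline. Unlike the regimes $\al\neq 1$, where $LI$ is a clean invertible isomorphism between ordinary Sobolev spaces, here one must retain the $\log$ in the symbol \emph{exactly} so that $X^{2+\log}$ and $H^{1}$ match without loss (a mere $H^{2}\to H^{1}$ bound would be too lossy to bootstrap), and one must contend with the non-invertibility of $LI(\Omega_{m})$: the single kernel mode $\cos(mx)$ is annihilated and therefore escapes the inversion, so it will have to be controlled by hand in the ensuing bootstrap, which is harmless since one Fourier mode is automatically $C^{\infty}$.
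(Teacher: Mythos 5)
Your Part (2) and the mapping property in Part (1) are essentially the paper's own argument: the same extraction of the $y\to 0$ coefficient $\left((R'(x))^2+(R(x))^2\right)^{-1/2}$ from the bracket in $S_2$, a remainder kernel handled by Young-type estimates, product estimates in $H^1$ with smallness coming from $\|R-1\|_{L^\infty},\|R'\|_{L^\infty}\leq C_r$ (the paper uses $W^{1,\infty}$ product rules where you invoke Lemma \ref{lemmafolland}; both work), and the bound $LI: X^{2+\log}\to H^1$ read off the symbol growth $k(\Omega-\Omega_k)\sim k\log k$ together with Proposition \ref{alternativexlog}.

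The gap is in your identification of the kernel. First, a computational point: the $-2\sin^2\left(\frac{y}{2}\right)$ piece of your splitting is \emph{not} a bounded, smoothing remainder. Acting on $\cos(kx)$ it contributes $\frac{k}{2\pi}\left(4+\frac{1}{k^2-1/4}\right)\sin(kx)$, a term of the same order $k$ as the main term; it is exactly what converts the sum $\sum_{j=1}^{k}\frac{1}{2j-1}$ produced by $IS_k$ into the sum $\sum_{j=2}^{k}\frac{1}{2j-1}$ defining $\Omega_k$. If you discard it as harmless, your dispersion relation is off by $\frac{2}{\pi}$ and no mode is annihilated at all. Second, and more seriously: combining the terms correctly, the exact symbol of $LI$ at mode $k$ is $-k(\Omega-\Omega_k)+\frac{k}{2\pi\left(k^2-\frac14\right)}$, not $-k(\Omega-\Omega_k)$. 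The leftover $O(1/k)$ piece is cancelled only by the linearization of $F_1$, whose $k$-th mode contribution is $-\frac{2k}{\pi(4k^2-1)}$; but in this section all of $F_1$ has been placed inside $g(R)$, so that cancellation is \emph{not} available within $LI$. Consequently one cannot "read the kernel off the leading symbol": a bounded diagonal correction moves the zeros of the symbol, and in fact $LI(\Omega_m)\cos(mx)=\frac{m}{2\pi\left(m^2-\frac14\right)}\sin(mx)\neq 0$. The operator whose kernel is spanned by $\cos(mx)$ is the full linearization $\mathcal{F}=\pa_RF(\Omega_m,1)$ of Section \ref{sectionexistencealpha1} (which is what the paper's one-line proof silently cites). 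What is true of $LI$ itself, and what the bootstrap actually uses, is weaker: its symbol at mode $m$ is not bounded away from zero uniformly for $\Omega$ near $\Omega_m$ (it even vanishes for some such $\Omega$), so $LI^{-1}$ is not uniformly bounded; this is exactly the difficulty the paper defers to Corollary \ref{bootstrapalpha1}, where only the frequencies $>m$, whose symbols are bounded below by $\sim k\log k$, are inverted via $P_{>m}$. Your closing remark about handling the mode $\cos(mx)$ by hand points in the right direction, but the kernel derivation as written is the step that fails.
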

\begin{proof}
\begin{enumerate}
 \item The linearity of $LI$ is trivial. We saw in section \ref{sectionexistencealpha1} that $LI(\Omega_{m})$ is not invertible, its kernel is spanned by $\cos(mx)$ and $LI$ maps $X^{2+\log}$ into $H^{1}$. This creates a technical problem, since the inverse of $LI$ is not uniformly bounded in $\Omega$. We deal with this problem in Corollary \ref{bootstrapalpha1}.
 \item  We start with $S_2$ and decompose the kernel

\begin{align*}
\frac{1}{\left(\left(\frac{R(x)-R(x-y)}{2\sin\left(\frac{y}{2}\right)}\right)^{2} + R(x)R(x-y)\right)^{1/2}}
= \frac{1}{\left((R(x))^{2} + (R'(x))^{2}\right)^{1/2}} + \text{Rem}(x,y),
\end{align*}

where Rem$(x,y) \in \mathcal{H}_{1}$ and

\begin{align*}
\sup_{x\in \T} \left\|\frac{\text{Rem}(x,\cdot)}{\sin(\cdot)}\right\|_{L^1(\T)}\leq C_r, \quad  \sup_{y \in \T}\left\|\frac{\text{Rem}(\cdot,y)}{\sin(y)}\right\|_{L^1(\T)}\leq C_r \\
\sup_{x\in \T} \left\|\frac{\pa_{x} \text{Rem}(x,\cdot)}{\sin(\cdot)}\right\|_{L^1(\T)}\leq C_r, \quad  \sup_{y \in \T}\left\|\frac{\pa_{x} \text{Rem}(\cdot,y)}{\sin(y)}\right\|_{L^1(\T)}\leq C_r \\
\end{align*}

%

This means that $S_2$ can be written as

\begin{align*}
 S_2 & = -\frac{1}{2\pi} \int \frac{((\pa^{k-1} R)'(x-y)-(\pa^{k-1} R)'(x))}{\left(4\sin^2\left(\frac{y}{2}\right)\right)^{1/2}}\left[\frac{1}{\left(\left(R'(x)\right)^{2} + (R(x))^2\right)^{1/2}}-1\right]dy \\
& -\frac{1}{2\pi} \int \frac{(\pa^{k-1} R)'(x-y)-(\pa^{k-1} R)'(x))}{\left(4\sin^{2}\left(\frac{y}{2}\right)\right)^{1/2}}\text{Rem}(x,y)dy.
\end{align*}

We note that  we can bound

 \begin{align*}
  \left\|\frac{1}{\left(\left(R'(x)\right)^{2} + (R(x))^{2}\right)^{1/2}}-1\right\|_{L^{\infty}} \leq C\left\|(R'(x))^{2} + (R(x))^{2} - 1\right\|_{L^{\infty}}
 \leq C \left\|1-R\right\|_{L^{\infty}} + C\|R'\|_{L^{\infty}} = C_r. \\
\left\|\pa_{x}\left(\frac{1}{\left(\left(R'(x)\right)^{2} + (R(x))^{2}\right)^{1/2}}-1\right)\right\|_{L^{\infty}} \leq C_r
 \end{align*}

 Then, $S_2$ can be bounded in the $H^{1}$-norm by

 \begin{align*}
  & C \left\|\pa^{k-1} R\right\|_{H^{2+\log}}\left\|\frac{1}{\left(\left(R'(x)\right)^{2} + (R(x))^{2}\right)^{1/2}}-1\right\|_{L^{\infty}} + C_r\|\pa^{k-1}R\|_{H^{1+\log}} + C_r\|\pa^{k-1} R\|_{H^{2+\log}} = C_r \|\pa^{k-1} R\|_{H^{2+\log}}.
 \end{align*}

In order to bound the $H^{1}$ norm of $S_1$, the most singular term can be bounded by 

\begin{align*}
 \|S_1\|_{H^{1}} \leq C \|\pa^{k-1} R\|_{H^{2}} \|R'\|_{L^{\infty}} + C \|\pa^{k-1} R\|_{H^{1}} \|R''\|_{L^{\infty}} \leq C_r \|\pa^{k-1} R\|_{H^{2+\log}}.
\end{align*}

\end{enumerate}
\end{proof}

\begin{lemma}
\label{lemmaGalpha1}
Let $g(R)$ be defined as before, and let $R \in X^{k+\log}$. Then $g(R) \in H^{1}$.
\end{lemma}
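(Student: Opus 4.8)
The plan is to bound the $H^{1}$-norm of $g(R)=G_{1}+G_{2}+G_{3}$ term by term, following exactly the scheme of Lemmas \ref{lemmaGalphamenor1} and \ref{lemmaGalphamayor1}. In each $G_{i}$ I would first carry out the $k-1$ derivatives, keep only the most singular contributions (those in which all, or all but one, of the derivatives fall on a single factor $\pa^{k-1}R$, or in which one derivative hits the denominator), and discard the genuinely lower-order pieces. To produce an $H^{1}$ bound I would then apply $\Lambda$ and estimate in $L^{2}$, working modulo commutators of lower order, and decompose every singular kernel as in the proof of Lemma \ref{lemmaISalpha1}, i.e. as its leading piece $\frac{1}{((R(x))^{2}+(R'(x))^{2})^{1/2}}\frac{1}{2|\sin(y/2)|}$ plus a remainder that lies in $\mathcal{H}_{1}$ (or, once the $\sin(y)$ factor is accounted for, in $\mathcal{H}_{0}$).

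For $G_{1}$ the kernel $\frac{\sin(y)}{(\ldots)^{1/2}}$ already belongs to $\mathcal{H}_{0}$, because the factor $\sin(y)$ cancels the singularity at $y=0$; hence after applying $\Lambda\pa^{k-1}$ the worst term is an $\mathcal{H}_{0}$ kernel acting on $\Lambda\pa^{k-1}R$, and Young's inequality together with the product estimate of Lemma \ref{lemmafolland} (to handle the smooth prefactors such as $R'(x)/R(x)$) closes it by $C\|R\|_{H^{k}}\le C\|R\|_{X^{k+\log}}$. The terms $G_{2}$ and $G_{3}$ are the ones carrying the critical kernel $\frac{\cos(y)}{|\sin(y/2)|}$. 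The decisive structural fact is that $g(R)$ has been built so that the leading constant multiple of this kernel has been moved into $LI$, so that each critical contribution appears either in commutator form (the difference between $\pa^{k-1}$ of the whole integral and its $(\pa^{k-1}R)'$ version) or paired against a remainder in $\mathcal{H}_{1}$. What survives is controlled by the operator $h\mapsto\int\frac{h(x)-h(x-y)}{|\sin(y/2)|}dy$, whose Fourier multiplier grows only like $\log|k|$ by Lemma \ref{LemmaICIS} with $\al=1$; applied to $\Lambda\pa^{k-1}R$ it therefore loses just one logarithm of a derivative.

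The main obstacle is precisely this logarithmic balance. Since at $\al=1$ the critical operator is log-divergent rather than a genuine fractional derivative, a naive $H^{k}$ estimate for $R$ would lose a logarithm and fail to close the $H^{1}$ bound by exactly one logarithmic factor. This is where the choice of the space $X^{k+\log}$ is essential: by the alternative characterization of Proposition \ref{alternativexlog}, membership in $X^{k+\log}$ supplies precisely the weight $(1+\log j)^{2}$ over $H^{k}$ needed to absorb the logarithm produced by the kernel $\frac{1}{|\sin(y/2)|}$. Pairing the log-losing operator against the log-winning norm, the two logarithms cancel and one obtains $\|g(R)\|_{H^{1}}\le C(\|R\|_{X^{k+\log}})$ with a constant independent of $k$, which is exactly what the bootstrap in the subsequent corollary requires.
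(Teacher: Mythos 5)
Your proposal is correct and follows essentially the same route as the paper's proof: reduce to an $L^{2}$ bound at the level of $k$ total derivatives (your $\Lambda\pa^{k-1}$ modulo commutators versus the paper's $\pa^{k}$), split every kernel into a smooth coefficient times $\frac{1}{2\left|\sin\left(\frac{y}{2}\right)\right|}$ plus an $\mathcal{H}_{0}$ remainder handled by Young's inequality, and close the critical term $\int\frac{\pa^{k}R(x)-\pa^{k}R(x-y)}{\left|\sin\left(\frac{y}{2}\right)\right|}dy$ by the $X^{k+\log}$ structure — you via the Fourier characterization of Proposition \ref{alternativexlog} and Lemma \ref{LemmaICIS}, the paper directly via the physical-space definition of the norm, which are equivalent. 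The only step you leave implicit is the integration by parts needed for the contributions carrying $\sin(y)\pa^{k}R'(x-y)$ (the paper's $G_{22}$), but this is indeed part of the scheme of Lemmas \ref{lemmaGalphamenor1} and \ref{lemmaGalphamayor1} that you invoke, so it is an omission of detail rather than of an idea.
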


\begin{proof}
 \begin{enumerate}
\item We start with $G_{2}$. We take $k$ derivatives (the outer $k-1$ plus one more) and compute the most singular terms. The objective is to bound the terms in $L^{2}$ norm. The terms are

\begin{align*}
G_{21} & = \frac{1}{2\pi}\frac{R'(x)}{R(x)} \int \frac{\cos(y)(\pa^{k} R(x)-\pa^{k} R(x-y))}{\left((R(x)-R(x-y))^2+4R(x)R(x-y)\sin^{2}\left(\frac{y}{2}\right)\right)^{1/2}}dy \\
G_{22} & = \frac{1}{2\pi}\frac{R'(x)}{R(x)} \int \frac{\sin(y)\pa^{k+1} R(x-y)}{\left((R(x)-R(x-y))^2+4R(x)R(x-y)\sin^{2}\left(\frac{y}{2}\right)\right)^{1/2}}dy \\
G_{23} & = \frac{1}{4\pi}\frac{R'(x)}{R(x)} \int \frac{\cos(y)(R(x)-R(x-y)) + \sin(y)R'(x-y)}{\left((R(x)-R(x-y))^2+4R(x)R(x-y)\sin^{2}\left(\frac{y}{2}\right)\right)^{3/2}} \\
& \times \left(2(R(x)-R(x-y))(\pa^{k} R(x) - \pa^{k} R(x-y))+4(\pa^{k} R(x) R(x-y) + \pa^{k}R(x-y))\sin^{2}\left(\frac{y}{2}\right)\right)dy.
\end{align*}

We start with $G_{21}$. We split the kernel in the following way:

\begin{align*}
 \frac{\cos(y)}{\left((R(x)-R(x-y))^2+4R(x)R(x-y)\sin^{2}\left(\frac{y}{2}\right)\right)^{1/2}}
= \frac{1}{\left((R'(x))^2 + (R(x))^2\right)^{1/2}}\frac{1}{\left(4\sin^{2}\left(\frac{y}{2}\right)\right)^{1/2}} + \text{Rem}(x,y),
\end{align*}

where Rem$(x,y) \in \mathcal{H}_{0}$. $G_{21}$ becomes

\begin{align*}
 \frac{1}{2\pi}\frac{R'(x)}{R(x)} \int \frac{\pa^{k} R(x)-\pa^{k} R(x-y)}{\left((R'(x))^2 + (R(x))^2\right)^{1/2}}\frac{1}{\left(4\sin^{2}\left(\frac{y}{2}\right)\right)^{1/2}}dy + \frac{1}{2\pi}\frac{R'(x)}{R(x)} \int(\pa^{k} R(x)-\pa^{k} R(x-y))\text{Rem}(x,y)dy,
\end{align*}

which can be bounded in $L^{2}$ by

\begin{align*}
 C\|R\|_{X^{k+\log}} + C\|R\|_{H^{k}} < \infty.
\end{align*}

We integrate by parts in $G_{22}$ to get

\begin{align*}
 G_{22} & = -\frac{1}{2\pi}\frac{R'(x)}{R(x)} \int \frac{\pa^{k} R(x) - \pa^{k} R(x-y)}{\left((R(x)-R(x-y))^2+4R(x)R(x-y)\sin^{2}\left(\frac{y}{2}\right)\right)^{1/2}} \\
& \times \left(\cos(y) - \frac{\sin(y)}{2}\frac{2(R(x)-R(x-y)R'(x-y) - 4R(x)R'(x-y)\sin^{2}\left(\frac{y}{2}\right) +4R(x)R(x-y)\sin\left(\frac{y}{2}\right)\cos\left(\frac{y}{2}\right)}{\left((R(x)-R(x-y))^2+4R(x)R(x-y)\sin^{2}\left(\frac{y}{2}\right)\right)}\right)dy
\end{align*}

Again, the kernel can be decomposed into

\begin{align*}
 & \frac{1}{\left((R(x)-R(x-y))^2+4R(x)R(x-y)\sin^{2}\left(\frac{y}{2}\right)\right)^{1/2}} \\
& \times \left(\cos(y) - \frac{\sin(y)}{2}\frac{2(R(x)-R(x-y)R'(x-y) - 4R(x)R'(x-y)\sin^{2}\left(\frac{y}{2}\right) +4R(x)R(x-y)\sin\left(\frac{y}{2}\right)\cos\left(\frac{y}{2}\right)}{\left((R(x)-R(x-y))^2+4R(x)R(x-y)\sin^{2}\left(\frac{y}{2}\right)\right)}\right) \\
& = \frac{1}{\left(4\sin^{2}\left(\frac{y}{2}\right)\right)^{1/2}}\frac{R(x)R'(x) + R'(x)R''(x)}{\left((R(x))^2 + (R'(x))^2\right)^{2}} + \text{Rem}(x,y),
\end{align*}

where Rem$(x,y) \in \mathcal{H}_{0}$. This produces the following bound:

\begin{align*}
 \|G_{22}\|_{L^{2}} \leq C\|R\|_{X^{k+\log}} + \text{l.o.t}
\end{align*}

Finally, $G_{23}$ is estimated as $G_{21}$.

\item $G_1$ is less singular than $G_{22}$ and is estimated the same way.

\item The most singular terms in $G_{3}$ are the ones when we hit with 1 derivative the denominator and $k-1$ derivatives one of the $R'(x) - R'(x-y)$ factors. But they are estimated in the same way as $G_{21}$.
 \end{enumerate}

\end{proof}

Let us now decompose $R$ into $R^{high}$, the part corresponding to the frequencies greater than $m$, and $R^{low}$, the part corresponding to the frequencies smaller or equal than $m$. Since we can regard $LI$ and $S$ as linear operators acting on $\pa^{k-1} R$, we can write

\begin{align}
\label{ecuaciontildeISG}
 (LI + S)(\pa^{k-1} R^{high}) + (LI + S)(\pa^{k-1} R^{low}) = g(R) \\
 \Rightarrow
 (LI + S)(\pa^{k-1} R^{high})  = g(R) - (LI + S)(\pa^{k-1} R^{low}) \equiv \tilde{g}(R)
\end{align}

We do this splitting because we will want to make the norm of $S$ small with respect to $\frac{1}{\|LI^{-1}\|}$ to be able to invert $LI+S$. This may not be possible since both quantities ($\|S\|$ and $\frac{1}{\|LI^{-1}\|}$) go to zero as $\Omega \rightarrow \Omega_{m}$ and it is not clear that one is smaller than the other. We will prevent this situation from happening by inverting $LI$ only on high frequencies.

\begin{lemma}
Let $\tilde{g}(R)$ be defined as above, and let $R \in X^{k+\log}$. Then $\tilde{g}(R) \in H^{1}$.
\end{lemma}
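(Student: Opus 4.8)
The plan is to reduce everything to the two lemmas already proved for the case $\al = 1$, using that $R^{low}$ is a trigonometric polynomial. First I would recall that by Lemma \ref{lemmaGalpha1}, the hypothesis $R \in X^{k+\log}$ already gives $g(R) \in H^{1}$. Since by definition
\begin{align*}
\tilde{g}(R) = g(R) - (LI+S)(\pa^{k-1} R^{low}),
\end{align*}
and $H^{1}$ is a vector space, it suffices to show that the extra term $(LI+S)(\pa^{k-1} R^{low})$ also lies in $H^{1}$.

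The key observation is that $R^{low}$ collects only the Fourier modes of frequency at most $m$, so it is a trigonometric polynomial and hence $C^{\infty}$. Consequently $\pa^{k-1} R^{low}$ is again a trigonometric polynomial, so it belongs to $X^{2+\log}$ with finite norm (the $X^{2+\log}$ condition is vacuous for a finite sum of modes, and the remaining quantities are all finite). I would then invoke Lemma \ref{lemmaISalpha1}: the operator $LI$ maps $X^{2+\log}$ into $H^{1}$, so $LI(\pa^{k-1} R^{low}) \in H^{1}$, and the bound $\|S(\pa^{k-1} R^{low})\|_{H^{1}} \leq C_r \|\pa^{k-1} R^{low}\|_{X^{2+\log}} < \infty$ shows $S(\pa^{k-1} R^{low}) \in H^{1}$ as well. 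Adding the two contributions gives $(LI+S)(\pa^{k-1} R^{low}) \in H^{1}$.

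Combining this with $g(R) \in H^{1}$ yields $\tilde{g}(R) \in H^{1}$, as claimed. There is no genuine analytic obstacle here: the only point worth stressing is that the low-frequency projection turns the subtracted term into the image under $LI+S$ of a smooth function, so no new singular-integral estimates beyond those established in Lemmas \ref{lemmaGalpha1} and \ref{lemmaISalpha1} are required. This is precisely why the splitting \eqref{ecuaciontildeISG} was introduced: it isolates the troublesome uniform-invertibility issue of $LI$ into the finite-dimensional low-frequency block while keeping the right-hand side $\tilde{g}(R)$ in $H^{1}$.
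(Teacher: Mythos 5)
Your proposal is correct and follows essentially the same route as the paper: the paper also cites Lemma \ref{lemmaGalpha1} for $g(R)$ and then disposes of the subtracted term $(LI+S)(\pa^{k-1}R^{low})$ by noting that $R^{low}$, being a finite sum of Fourier modes, is analytic, so $LI$ and $S$ applied to it are controlled by low-order norms of $R$. Your version merely makes explicit the appeal to Lemma \ref{lemmaISalpha1} for the mapping properties, which is a harmless elaboration of the same argument.
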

\begin{proof}
 The regularity of $g(R)$ was proved in Lemma \ref{lemmaGalpha1}. We only need to show the regularity of $(LI+S)(\pa^{k-1} R^{low})$, but this follows easily from the fact that $R^{low}$ is analytic and therefore $LI$ and $S$ can be bounded by low order norms of $R$.
\end{proof}

\begin{corollary}
\label{bootstrapalpha1}
 Let $R \in X^{k+\log}$. Then $R \in X^{k+1+\log}$.
\end{corollary}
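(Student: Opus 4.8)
The plan is to run exactly the $(LI+S)(\pa^{k-1}R)=g(R)$ mechanism used in Corollaries~\ref{bootstrapalphamenor1} and~\ref{bootstrapalphamayor1}, the only genuinely new feature being that $LI(\Omega_m)$ fails to be invertible. First I would record that, since $R$ is an $m$-fold solution, $R^{low}$ contains only the single frequency $m$ and is therefore a trigonometric polynomial, hence $C^{\infty}$; consequently $(LI+S)(\pa^{k-1}R^{low})$ is smooth and the whole difficulty sits in the high-frequency piece. Using the splitting \eqref{ecuaciontildeISG} I would reduce the problem to solving
\[
(LI+S)(\pa^{k-1}R^{high})=\tilde g(R),
\]
where $\tilde g(R)\in H^{1}$ by the Lemma immediately preceding the statement.

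The key structural observation is that $LI$ is a Fourier multiplier and hence commutes with the frequency projections $P$ onto the modes $>m$ and $Q=I-P$ onto the modes $\le m$. By Lemma~\ref{lemmaISalpha1} the symbol of $LI$ vanishes only at the mode $m$, and for $\Omega$ in a small enough neighbourhood of $\Omega_m$ its symbol on frequency $n\ge 2m$ stays bounded away from zero and grows like $n\log n$; this uses the monotonicity of $\Omega_k$ from Proposition~\ref{Omegammonotonic} together with $\Omega_k\sim\log k$ from Lemma~\ref{lemmagrowthomega}, which keeps $\Omega$ away from every $\Omega_n$ with $n\ge 2m$. Therefore $LI\colon PX^{2+\log}\to PH^{1}$ is an isomorphism whose inverse is bounded \emph{independently of $k$}. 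Since $\|S\|\le C_r$ with $C_r\to0$ uniformly in $k$ (Lemma~\ref{lemmaISalpha1}), a Neumann series shows that $A:=LI+PS$ is invertible on the high-frequency subspace for $r$ small, again with a bound uniform in $k$.

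To finish I would apply $P$ to the equation: because $LIP=PLI$, because $P\,\pa^{k-1}R^{low}=0$, and because the low-frequency contribution $S(\pa^{k-1}R^{low})$ is smooth, this yields $A(\pa^{k-1}R^{high})=P\tilde g(R)\in H^{1}$, whence $\pa^{k-1}R^{high}=A^{-1}(P\tilde g(R))\in X^{2+\log}$, i.e. $R^{high}\in X^{k+1+\log}$. Combined with the smoothness of $R^{low}$ this gives $R\in X^{k+1+\log}$, as claimed.

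The point I expect to be most delicate is precisely this last inversion: a priori $\pa^{k-1}R^{high}$ is only known to lie in $X^{1+\log}$, so one must justify applying $A^{-1}$ to it rather than arguing circularly. I would handle this by observing that the same Neumann-series argument makes $A$ invertible on the whole shifted Sobolev scale (not merely as a map $PX^{2+\log}\to PH^{1}$), so the two inverses agree on their common domain and the low-regularity a priori solution is forced into $A^{-1}(PH^{1})=PX^{2+\log}$ — an elliptic-regularity type conclusion. Keeping every constant uniform in $k$ throughout is essential, since otherwise the radius $r$ of the ball $B_r$ on which the argument runs would shrink to $0$ as $k\to\infty$ and the bootstrap toward $C^{\infty}$ could not be iterated.
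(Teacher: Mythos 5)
Your proposal is correct and takes essentially the same route as the paper's own proof: split $R$ into $R^{low}$ and $R^{high}$, project equation \eqref{ecuaciontildeISG} onto the modes $>m$ (where the multiplier of $LI$ stays bounded away from zero uniformly for $\Omega$ near $\Omega_m$, with bounds independent of $k$), and invert $P_{>m}LI+P_{>m}S$ on that high-frequency subspace via smallness of $S$, exactly circumventing the degeneracy of $LI(\Omega_m)$ at the mode $m$. The only difference is one of rigor rather than of method: you explicitly justify, by an elliptic-regularity/uniqueness argument on the scale of spaces, that the a priori low-regularity function $\pa^{k-1}R^{high}$ coincides with $\left(P_{>m}LI+P_{>m}S\right)^{-1}P_{>m}\tilde{g}(R)\in X^{2+\log}$, a step the paper's proof leaves implicit.
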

\begin{proof}
%

Let $F_m$ be the space that consists of functions that have modes $> m$. It is immediate that $LI$ maps $F_m$ into $F_m$, but it is not obvious that $S$ does the same. Instead, to ensure this, we apply a projection operator $P_{>m}$ onto the modes greater than $m$ to equation \eqref{ecuaciontildeISG} to obtain

\begin{align*}
 (P_{>m}LI + P_{>m}S)(\pa^{k-1}R^{high}) = P_{>m}\tilde{g}(R).
\end{align*}

Now, both $P_{>m}LI$ and $P_{>m}S$ map $F_m$ into $F_m$ and $ (P_{>m}LI + P_{>m}S)$ is invertible. Therefore

\begin{align*}
 \pa^{k-1} R^{high} = \underbrace{(P_{>m}LI+P_{>m}S)^{-1}\underbrace{P_{>m}\tilde{g}(R)}_{\in H^{1}}}_{\in X^{2+\log}} \in X^{2+\log},
\end{align*}

which implies $R^{high} \in X^{k+1+\log}$. Finally

\begin{align*}
 \|R\|^{2}_{X^{k+1+\log}} = \underbrace{\|R^{high}\|^{2}_{X^{k+1+\log}}}_{\text{already shown } < \infty} + \underbrace{\|R^{low}\|^{2}_{X^{k+1+\log}}}_{\text{finite sum of finite coefficients since }R \in X^{k+\log}} < \infty,
\end{align*}

which implies that $R \in X^{k+1+\log}$.

\end{proof}

We conclude this section with a proposition concerning the convexity of the patches.

\begin{prop}
\label{convexitypatches}
 Let $r$ be small enough. Then the set of solutions constructed in the previous section parametrizes convex patches.
\end{prop}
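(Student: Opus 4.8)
The plan is to reduce convexity to a pointwise lower bound on the curvature of the boundary and then to exploit that the constructed solutions are $C^2$-close to the unit circle. Parametrizing the boundary as $z(x)=(R(x)\cos x, R(x)\sin x)$, a direct computation of the signed curvature of a polar curve gives
\begin{align*}
\kappa(x)=\frac{R(x)^2+2R'(x)^2-R(x)R''(x)}{\left(R(x)^2+R'(x)^2\right)^{3/2}}.
\end{align*}
Since the denominator is strictly positive whenever $R>0$, the curve is strictly convex precisely when the numerator
\begin{align*}
N(x)\equiv R(x)^2+2R'(x)^2-R(x)R''(x)
\end{align*}
is strictly positive on $\T$. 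For the disk $R\equiv 1$ one has $N\equiv 1$, so the disk is strictly convex, and the whole strategy is to show that $N$ stays close to $1$ when $R$ is close to $1$.

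First I would record that all the spaces in which the solutions were constructed control the $C^2$-norm. Indeed, with $k\geq 3$ fixed, the bifurcation spaces $X^{k+\log}_m$ (case $\al=1$), $X^{k+\al-1}_m$ (case $1<\al<2$), as well as the $C^k$ spaces used for $0<\al<1$, all embed continuously into $H^3(\T)$, and $H^3(\T)\hookrightarrow C^2(\T)$ by the one-dimensional Sobolev embedding. Hence there is a constant $C$, independent of the particular solution along the branch, such that
\begin{align*}
\|R-1\|_{C^2(\T)}\leq C\|R-1\|,
\end{align*}
where $\|\cdot\|$ denotes the norm of the relevant bifurcation space. In particular, for $R-1\in B_r(1)$ we get $\|R-1\|_{C^2(\T)}\leq Cr$, which is arbitrarily small for small $r$; note also that $R>0$ on $\T$ once $r<1$, so $z$ is a genuine simple closed star-shaped curve.

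Writing $f=R-1$ and expanding,
\begin{align*}
N=1+2f+f^2+2(f')^2-f''-ff'',
\end{align*}
so that $|N-1|\leq 3\|f\|_{C^2}+4\|f\|_{C^2}^2\leq C'\|f\|_{C^2}$ whenever $\|f\|_{C^2}\leq 1$. Combining this with the previous bound yields $N(x)\geq 1-C''r$ for every $x\in\T$, which is strictly positive once $r$ is small enough. Therefore $\kappa>0$ everywhere; as $z$ is a simple closed $C^2$ curve, its tangent angle is then strictly increasing with total variation $2\pi$, and the enclosed region is (strictly) convex. The only point requiring genuine care is the uniformity of the embedding constant across the three ranges of $\al$ and along the entire bifurcation branch, but this is immediate since $k\geq 3$ is fixed and the embedding $H^3(\T)\hookrightarrow C^2(\T)$ does not depend on the solution; the remaining estimate on $N$ is elementary, so I do not expect a real obstacle here.
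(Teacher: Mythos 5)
Your proof is correct and takes essentially the same approach as the paper: both compute the signed curvature $\kappa(x) = \frac{R^2+2(R')^2-RR''}{(R^2+(R')^2)^{3/2}}$ of the polar parametrization and use the closeness of $R$ to $1$ for small $r$ to conclude that the numerator stays strictly positive. The paper's version is terser---it simply drops the positive term $2(R')^2$ and bounds the numerator below by $\min_x R(x)\left(\min_x R(x)-\max_x R''(x)\right)$---whereas you make explicit the Sobolev embedding into $C^2$ and the expansion around the disk, but the underlying argument is the same.
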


\begin{proof}
 We compute the signed curvature at a point $x$:

\begin{align*}
 \displaystyle \kappa (x) & =  \frac{(R(x))^2 + 2(R'(x))^2 - R(x) R''(x)}{((R(x))^2 + (R'(x))^2)^{3/2}} > \frac{R(x)(R(x)-R''(x))}{((R(x))^2 + (R'(x))^2)^{3/2}} \\
&\displaystyle > \frac{\min_{x}R(x)(\min_{x}R(x) - \max_{x} R''(x))}{((R(x))^2 + (R'(x))^2)^{3/2}} > 0
\end{align*}

if $r$ is small enough. This shows the convexity.

\end{proof}

Combining Corollaries \ref{bootstrapalphamenor1}, \ref{bootstrapalphamayor1} and \ref{bootstrapalpha1}, and Proposition \ref{convexitypatches}, we derive Theorem \ref{teoremaregularidadconvexidad}.

\section*{Acknowledgements}

AC, DC and JGS were partially supported by the grant MTM2011-26696 (Spain) and ICMAT Severo Ochoa project SEV-2011-0087. AC was partially supported by the ERC grant 307179-GFTIPFD. We are very grateful to Charlie Fefferman for his suggestions about the regularity proof for $\al = 1$.

 \bibliographystyle{abbrv}
 \bibliography{references}

%

\begin{tabular}{l}
\textbf{Angel Castro} \\
{\small Instituto de Ciencias Matem\'aticas}\\
{\small Universidad Aut\'onoma de Madrid}\\
{\small C/ Nicolas Cabrera, 13-15, 28049 Madrid, Spain}\\
{\small Email: angel\underline{  }castro@icmat.es}\\
    \\
  \textbf{Diego C\'ordoba} \\
  {\small Instituto de Ciencias Matem\'aticas} \\
 {\small Consejo Superior de Investigaciones Cient\'ificas} \\
 {\small C/ Nicolas Cabrera, 13-15, 28049 Madrid, Spain} \\
  {\small Email: dcg@icmat.es} \\
\\
 {\small Department of Mathematics} \\
 {\small Princeton University} \\
 {\small 804 Fine Hall, Washington Rd,} \\
  {\small Princeton, NJ 08544, USA} \\
 {\small Email: dcg@math.princeton.edu} \\
\\
\textbf{Javier G\'omez-Serrano} \\
{\small Department of Mathematics} \\
{\small Princeton University}\\
{\small 610 Fine Hall, Washington Rd,}\\
{\small Princeton, NJ 08544, USA}\\
 {\small Email: jg27@math.princeton.edu}
  \\

\end{tabular}

\end{document}